\documentclass[11pt,reqno]{amsart}

\usepackage[margin=1in]{geometry}
\usepackage[utf8]{inputenc}
\usepackage{amsmath,amsthm,amsfonts,amssymb}
\numberwithin{equation}{section}
\usepackage{times}
\usepackage{enumitem}
\setlist[enumerate]{label=(\roman*), ref=\roman*}
\usepackage{mathrsfs}
\usepackage{comment}
\usepackage{graphicx}

\usepackage{algpseudocode}
\usepackage{algorithm}
\usepackage{algorithmicx}

\usepackage{caption}
\usepackage{subcaption}
\usepackage{makecell}
\usepackage{xcolor}
\usepackage{booktabs,longtable}
\usepackage[nosort]{cite}
\usepackage[colorlinks=true, pdfstartview=FitV, linkcolor=blue,citecolor=blue, urlcolor=blue]{hyperref}
\usepackage[nameinlink,capitalise]{cleveref}
\hypersetup{pdftitle={Nonlinear stability of the Guderley imploding shock wave in radial symmetry}, pdfauthor={Giorgio Cialdea}}

\newtheorem{theorem}{Theorem}[section]
\newtheorem{lemma}[theorem]{Lemma}
\newtheorem{proposition}[theorem]{Proposition}
\newtheorem{corollary}[theorem]{Corollary}

\newtheorem{remark}[theorem]{Remark}

\newcommand{\dd}{\mathrm{d}}
\newcommand{\pp}{\partial}
\newcommand{\RR}{\mathbb{R}}
\newcommand{\one}{\mathbf{1}}
\newcommand{\Cr}{C_{r}}
\newcommand{\Cu}{C_{u}}
\newcommand{\Cb}{C_{b}}
\newcommand{\Rs}{R_{\rm s}}

\newcommand{\ubar}{\bar U}
\newcommand{\sigbar}{\bar\Sigma}
\newcommand{\Bbar}{\bar B}
\newcommand{\Vbar}{\bar V}
\newcommand{\Qbar}{\bar Q}
\newcommand{\Kbar}{\bar K}

\newcommand{\calC}{\mathcal C}

\newcommand{\rA}{\mathring A}
\newcommand{\tZ}{\widetilde{\mathring Z}}
\newcommand{\tA}{\widetilde{\mathring A}}
\newcommand{\tW}{\widetilde{\mathring W}}
\newcommand{\Y}{\mathcal Y}
\newcommand{\Ybar}{\bar{\mathcal Y}}
\newcommand{\Ytil}{\widetilde{\mathcal Y}}
\newcommand{\Tmat}{\mathcal T}
\newcommand{\Dmat}{\mathcal D}
\newcommand{\Ncal}{\mathcal N}
\newcommand{\diag}{\operatorname{diag}}
\newcommand{\Ocal}{\mathcal O}
\newcommand{\uu}{\mathbf{u}}
\newcommand{\s}{\mathsf{s}}
\renewcommand{\ss}{\s}
\newcommand{\ds}{\dot{\mathsf{s}}}
\newcommand{\rW}{\mathring{W}}
\newcommand{\rZ}{\mathring{Z}}
\newcommand{\usp}{u{\scriptstyle|_{\s}^+}}

\newcommand{\rsp}{\rho{\scriptstyle|_{\s}^+}}

\newcommand{\rsm}{\rho{\scriptstyle|_{\s}^-}}
\newcommand{\usm}{u{\scriptstyle|_{\s}^-}}
\newcommand{\ssp}{\sigma{\scriptstyle|_{\s}^+}}
\newcommand{\ssm}{\sigma{\scriptstyle|_{\s}^-}}
\def\jump#1{{[\hspace{-1.5pt}[#1]\hspace{-1.5pt}]}}
\newcommand{\crr}{\mathsf{c}_\mathsf{r}}
\newcommand{\crrb}{\overline{\mathsf{c}}_\mathsf{r}}
\newcommand{\tcrr}{\widetilde{\mathsf{c}}_\mathsf{r}}
\newcommand{\cbb}{\mathsf{c}_\mathsf{b}}
\newcommand{\cuu}{\mathsf{c}_\mathsf{u}}
\newcommand{\cbbb}{\overline{\mathsf{c}}_\mathsf{b}}

\newcommand{\tcbb}{\widetilde{\mathsf{c}}_\mathsf{b}}
\newcommand{\tcuu}{\widetilde{\mathsf{c}}_\mathsf{u}}
\newcommand{\LZ}{\bar\Lambda_1}
\newcommand{\LA}{\bar\Lambda_2}
\newcommand{\LW}{\bar\Lambda_3}

\title{Nonlinear stability of the Guderley imploding shock wave in radial symmetry}
\author{Giorgio Cialdea}
\address{Courant Institute, Department of Mathematics, New York University, New York, NY 10012.}
\email{giorgio.cialdea@cims.nyu.edu}
\date{\today}

\begin{document}
\begin{abstract}
We prove nonlinear stability of the Guderley imploding shock under radial perturbations for the three-dimensional compressible Euler equations in the adiabatic range $1<\gamma\leq\frac{5}{3}$. The perturbed solutions develop an asymptotically self-similar implosion in finite time. Our result allows perturbations with small, \emph{nonzero} constant pressure in the quiescent region ahead of the shock. The proof combines weighted $L^\infty$ estimates along characteristics with a computer-assisted proof of stability inequalities for the Guderley profile.
\end{abstract}
\maketitle
\setcounter{tocdepth}{1}

\section{Introduction} \label{introduction}

Implosions form a remarkable class of singularities in compressible fluid motion, in which at least one of the primary thermodynamic flow variables (density, pressure, temperature) becomes unbounded at a single point in spacetime. Among the most significant is the Guderley imploding shock~\cite{Gud42}, a self-similar solution to the Euler equations that describes a strong, radially symmetric shock wave converging to a point~\cite{Sed59,Sta60,ZR67,Bar96}.

A natural question to study is the stability of such solutions. Linear mode analysis suggests~\cite{Mor51,CZP95} that the Guderley imploding solution is linearly stable under radial perturbations, while unstable under non-radial perturbations~\cite{BK63,WR96}. In this paper, we prove that the Guderley imploding solution is \emph{nonlinearly} stable under radial perturbations.

The full compressible Euler equations in $\RR^d$ for the unknown density $\rho$, the momentum $\rho \uu$, and the total energy $E$ are given by the system of conservation laws
\begin{subequations} \label{eq:euler:cl}
 \begin{align}  
 \partial_t \rho +  \operatorname{div}(\rho   \uu) & = 0  \,, \\ 
 \partial_t (\rho \uu) + \operatorname{div}(\rho \uu \otimes \uu  +  p I) & = 0 \,,  \\ 
 \partial_t E  + \operatorname{div} ((p+E) \uu) &=0 \,,  
 \end{align}
 \end{subequations}  
 where the pressure $p$ is given by 
 \begin{equation*}  
 p := (\gamma-1) \bigl( E - \tfrac 12 \rho |\uu|^2 \bigr)
 \end{equation*} 
and $\gamma>1$ is the adiabatic exponent. We also set $\alpha:=\frac{\gamma-1}{2}$ and introduce the square root of the pseudo-entropy\footnote{Usually, the quantity $\frac{\gamma p}{\rho^\gamma}$ is denoted as $e^S$, where $S$ is the specific entropy.}
 \begin{equation*} 
b:= \sqrt{\tfrac{\gamma p}{\rho^\gamma}}
\end{equation*} 
and the rescaled sound speed
\begin{equation*} 
\sigma := \tfrac{1}{\alpha} \rho^\alpha b \,.
\end{equation*}

\subsection{Radial symmetry}
We will consider radial solutions to the compressible Euler equations~\eqref{eq:euler:cl}. By making a radial/spherical ansatz
\begin{equation*} 
 \sigma(x,t) =  \sigma(|x|,t), \quad b(x,t) =  b(|x|,t), \quad \uu(x,t) = \tfrac{x}{|x|} u^r(|x|,t),
\end{equation*}
where $|x|=r$, and using that $\operatorname{div}\uu= \pp_r u^r + \frac{d-1}{r} u^r$, we may write~\eqref{eq:euler:cl} as
\begin{subequations}\label{eq:euler:rad}
     \begin{align}
\partial_t \sigma + u^r \, \partial_r \sigma + \alpha \sigma \left( \partial_r u^r + \tfrac{d-1}{r}u^r \right) &= 0
\,, \\
\partial_t u^r + u^r \, \partial_r u^r + \alpha \sigma \partial_r \sigma - \tfrac{\alpha \sigma^2}{\gamma b} \partial_r b &= 0 \,, \\
\partial_t b + u^r \, \partial_r b &= 0 \,.
\end{align}\end{subequations}

\subsection{The Rankine--Hugoniot jump conditions in spherical symmetry}
We will analyze weak solutions of the Euler equations~\eqref{eq:euler:cl}. More precisely we will work with piecewise $C^1_{x,t}$ solutions, which are discontinuous across a \emph{shock surface}, denoted by $\mathcal{S} = \{(x,t): \s(t)-|x|=0\} \subset \RR^{d+1}$ (see for instance~\cite{Maj83exist}). Across such a surface, weak solutions of Euler must satisfy the Rankine--Hugoniot jump conditions. In radial symmetry, we write $u=u^r$ for the radial velocity and express the Rankine--Hugoniot jump conditions associated with~\eqref{eq:euler:cl} as the one-dimensional system
\begin{subequations} 
\label{intro:RH}
\begin{align}
\ds \jump{\rho u}&= \jump{\rho u^2 + p}, \\
\ds \jump{\rho}&= \jump{\rho u}, \\
\ds \jump{E}&= \jump{(p+E)u}.
\end{align}
\end{subequations}
Here $\jump{f}:=f|_{\s}^{+}-f|_{\s}^{-}$, where $+$ and $-$ denote the exterior and interior traces, respectively.

\subsubsection{The Lax entropy condition} Shock solutions of Euler, in order to have uniqueness, must also satisfy an entropy condition. We will ask that our solution satisfy the Lax entropy conditions for a $1$-shock, namely:
\begin{equation} \label{intro:lax}
(u-\alpha\sigma)|_\s^{+} < \ds< (u-\alpha\sigma)|_\s^{-}, \qquad
\ds< u|_\s^+ \le (u+\alpha\sigma)|_\s^+, \qquad
\ds< u|_\s^- \le (u+\alpha\sigma)|_\s^-.
\end{equation}
By assuming the Lax inequalities~\eqref{intro:lax}, the exterior state $(\usp, \rsp, \ssp)$ can be explicitly computed in terms of $(\usm, \rsm, \ssm)$ and $\ds$ by inverting~\eqref{intro:RH}
\begin{subequations} \label{intro:RH:re}
\begin{align}
\usp
&= \ds
+ \tfrac{\alpha\left((\usm-\ds)^2+\alpha(\ssm)^2\right)}
{(1+\alpha)(\usm-\ds)}, \\
\rsp
&= \tfrac{(1+\alpha)(\usm-\ds)^2\rsm}
{\alpha\left((\usm-\ds)^2+\alpha(\ssm)^2\right)}, \\
\ssp
&= \tfrac{\sqrt{\left(\gamma(\usm-\ds)^2-\alpha^3(\ssm)^2\right)
\left((\usm-\ds)^2+\alpha(\ssm)^2\right)}}
{\sqrt{\alpha}(1+\alpha)(\usm-\ds)}.
\end{align}
\end{subequations}

\subsection{The Guderley solution} The classical imploding Guderley scenario was first described by Guderley~\cite{Gud42}, and then rediscovered by Landau and Stanyukovich~\cite{LS45,Sta60}. A radial shock wave converges into a quiescent medium (constant density, zero velocity and zero pressure), leading to a collapse at the spatial origin. In particular, the Guderley imploding solution is an exact radially symmetric self-similar solution of~\eqref{eq:euler:cl}. That is,  there exists a self-similar exponent $0<\crrb<1$, which is classically determined through a nonlinear eigenvalue problem, such that for $t<0$ the solution has the form
\begin{equation*}
\begin{aligned}
\bar u(r,t)&=
\begin{cases}
0, & 0\le r<(-t)^{\crrb},\\
\makebox[10em][l]{$(-t)^{\crrb-1}\ubar\left(\tfrac{r}{(-t)^{\crrb}}\right),$} & r>(-t)^{\crrb},
\end{cases}
\\
\bar\sigma(r,t)&=
\begin{cases}
0, & 0\le r<(-t)^{\crrb},\\
\makebox[10em][l]{$(-t)^{\crrb-1}\sigbar\left(\tfrac{r}{(-t)^{\crrb}}\right),$} & r>(-t)^{\crrb},
\end{cases}
\\
\bar\rho(r,t)&=
\begin{cases}
1, & 0\le r<(-t)^{\crrb},\\
\makebox[10em][l]{$\overline{\mathsf R}\left(\tfrac{r}{(-t)^{\crrb}}\right),$} & r>(-t)^{\crrb},
\end{cases}
\end{aligned}
\end{equation*}
for some profiles $(\bar U, \bar \Sigma, \bar{\mathsf{R}})$ (see Section~\ref{sec:guderley} for more details on the construction).
\subsubsection{Previous work on the Guderley imploding shock wave}
For much of its history, the Guderley problem has been investigated through semi-analytic and numerical methods~\cite{LazR77,Laz81,MTS82,RKB12,GBK23,Sed59}. This classical description included both the converging shock and its reflected continuation after collapse~\cite{Gud42,LS45,Sta60,Laz81}. The self-similar exponent was numerically selected by a shooting method from the sonic point $R_s$.  A rigorous construction, which overcame these longstanding challenges, was obtained by Jang, Liu, and Schrecker~\cite{JLS25b} for $1<\gamma\leq3$; their subsequent work established also the reflected continuation~\cite{JLS25a}.  Jenssen and Tsikkou~\cite{JK18} rigorously showed that the Guderley imploding solution, and the subsequent reflection, provide a genuine weak solution for the compressible Euler equations globally in time. In~\cite{CSV25}, Cialdea, Shkoller and Vicol constructed radial initial data of class $C^{1,\frac{1}{3}}$ whose evolution produces a shock that strengthens and matches the Guderley solution before the singularity time.

\subsubsection{Previous numerical work on the radial stability of Guderley}
The question of stability of the Guderley solution has also been the subject of many numerical studies. Morawetz's doctoral thesis~\cite{Mor51} studied the linear operator arising from linearizing the Euler equations around the Guderley solution, and provided numerical evidence of stability. More precisely, Morawetz studied a spherical piston enclosing a gas initially at rest, with constant density and zero pressure. In this scenario, by prescribing the motion of the piston to follow exactly a fluid trajectory of Guderley, we can recreate the imploding Guderley shock wave. In the thesis, she studied how modifying the trajectory of the piston affects the stability of the Guderley solution. She then wrote the corresponding linear problem and reduced it to a singular (through the sonic point) second-order ordinary differential equation depending on the complex mode parameter $k$. After identifying the modes corresponding to scaling and time translation, she performed numerical computation suggesting that the remaining modes decay. Her work left open a rigorous proof of the linear stability under radial perturbations, nonlinear radial stability, and the extension to nonzero pressure in the interior.

Chen, Zhang, and Panarella~\cite{CZP95} later obtained further evidence for radial stability from perturbative calculations. 
\subsubsection{Previous numerical work on the instability of Guderley outside symmetry}
Outside radial symmetry, the numerical studies of the linear stability of Guderley actually show a different picture. Indeed, Brushlinskii and Kazhdan~\cite{BK63} identified unstable modes, and later on Wu and Roberts~\cite{WR96} refined their analysis. Related works by Gardner, Book, and Bernstein~\cite{GBB82} in the Chester--Chisnell--Whitham approximation, and by Evans~\cite{Eva96} in an approximate shock-propagation model, found also evidence of instability outside radial symmetry.

These works paint a consistent picture: the Guderley imploding shock is stable in radial symmetry, but unstable outside symmetry. In this paper, we rigorously justify the first half of this picture, by providing a proof of nonlinear stability of the Guderley imploding shock wave.

\subsection{Main result}
In this paper we study the problem of nonlinear stability of the Guderley imploding shock. In our main result we are able to prescribe initial data with \emph{nonzero} pressure (albeit constant) in the interior. 

We state our main result. For simplicity, we will consider the case $d=3$ and $1 < \gamma \le \frac{5}{3}$; we expect our techniques to apply with minor modifications to the remaining parameter regimes. 

\begin{theorem}[Radial stability, abbreviated version]
\label{thm:intro-main-physical}
Fix $d=3$, $1<\gamma\leq\frac{5}{3}$, and the associated Guderley profile $(\bar U,\bar\Sigma,\bar B)$, with similarity exponent $\crrb$. Here $\bar B$ denotes the self-similar profile of $b$, as defined in~\eqref{bw:ss}. Consider piecewise-$C^1$ radial initial data $(\rho,u^r,p)(\cdot,-1)$ for~\eqref{eq:euler:cl} with a single shock at $r=1$, that satisfy the compatibility conditions as specified in~\eqref{eq:initial-finite-pressure-shock-traces} and~\eqref{eq:initial-differentiated-compatibility} \footnote{The compatibility conditions guarantee that the initial data generate a piecewise $C^1_{x,t}$ shock solution to the Euler equations, satisfying the Rankine--Hugoniot conditions~\eqref{intro:RH} and the Lax entropy inequalities~\eqref{intro:lax}.}. Assume that, in the interior region
\begin{equation*}
(\rho,u^r,p)(r, -1)=(1,0,p_0) \quad \text{for }0\leq r<1,
\end{equation*}
 where $p_0 \ge 0$ is small enough.  For $r>1$, denote 
\begin{equation*}
(\tilde\rho,\tilde u^r,\tilde p)(r,-1)=(\rho,u^r,p)(r,-1)-(\bar\rho,\bar u,\bar p)(r,-1)
\end{equation*}
and assume $(\tilde\rho,\tilde u^r,\tilde p)(\cdot,-1)$ are small enough in an appropriate weighted $C^1$ norm (see Corollary~\ref{thm:main-physical}). Then there exist a finite time $t_\ast$ close to $0$ and a unique radial classical shock, asymptotically self-similar implosion solution of~\eqref{eq:euler:cl} on $-1\leq t<t_\ast$, with the prescribed initial data. The shock converges to the origin at time $t_\ast$, with the Guderley rates\footnote{We say $f(t) \simeq g(t)$ as $t \uparrow t_*$ if  $\frac{f(t)}{g(t)}$ stays uniformly away from zero and bounded as $t \uparrow t_*$. }
\begin{equation*}\s(t)\simeq(t_\ast-t)^{\crrb},\qquad -\ds(t)\simeq(t_\ast-t)^{\crrb-1}\quad\text{as }t\uparrow t_\ast.
\end{equation*}
In particular, velocity and sound speed blow up with rates
\begin{equation*}
\| u \|_\infty(t) \simeq (t_\ast-t)^{\crrb-1}, \quad \| \sigma \|_\infty(t) \simeq (t_\ast-t)^{\crrb-1}, 
\end{equation*}
while the density remains bounded.

In the interior region ahead of the shock (that is, for $r<\ss(t)$), for all $-1 \le t < t_\ast$, the density and pressure remain constant while velocity vanishes identically.

In modulated self-similar coordinates, for $R>1$ the exterior fields $(\Sigma,U,B)$ exist for all $\tau\geq0$ and converge exponentially to $(\bar\Sigma,\bar U,\bar B)$ in the weighted $C^1$ norms specified in~\eqref{eq:main-self-similar-perturbation-decay}. 
\end{theorem}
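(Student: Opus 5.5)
The proof will be carried out in modulated self-similar coordinates adapted to the shock. I introduce
\[
R = \frac{r}{\s(t)}, \qquad \frac{d\tau}{dt} = \frac{1}{T(t)},
\]
with $T$ a modulated blow-up time scale. The unknowns are rescaled accordingly,
\[
u = \frac{\s}{T}\,U, \qquad \sigma = \frac{\s}{T}\,\Sigma, \qquad b = \bigl(\tfrac{\s}{T}\bigr)^{\ldots} B,
\]
so that the shock is pinned at $R=1$. Two modulation parameters are then fixed by imposing normalizations at the shock:
\begin{itemize}
\item $\crr(\tau) := -\ds\, T/\s$, which plays the role of the instantaneous self-similar exponent;
\item a second parameter normalizing the time scale, which absorbs the time-translation mode.
\end{itemize}
The Guderley profile $(\bar U,\bar\Sigma,\bar B)$ is then a stationary solution with $\crr \equiv \crrb$.

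The interior region $R<1$ is handled separately. There the data is constant with zero velocity, and the shock is the only disturbance, moving inward supersonically by the Lax condition \eqref{intro:lax}. By finite speed of propagation, the interior state $(1,0,p_0)$ is preserved for all $t<t_\ast$. The interior trace is therefore explicit, and \eqref{intro:RH:re} gives the exterior boundary values at $R=1$ as smooth functions of $\ds$ and $p_0$. In self-similar variables the interior sound speed becomes $\Sigma^- \sim \sqrt{p_0}\,T/\s$. Because $\s/T\to\infty$, this decays exponentially in $\tau$ and enters only as a small decaying forcing in the boundary conditions.

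Next I write the perturbation $(\tilde\Sigma,\tilde U,\tilde B)$ in Riemann-invariant variables
\[
\tilde Z = \tilde U - \tilde\Sigma, \qquad \tilde W = \tilde U + \tilde\Sigma, \qquad \tilde B,
\]
each transported along its characteristic family. The linearized transport speeds about Guderley give outgoing behaviour from $R=1$ for $W$ and $B$ (and for $Z$ up to the sonic point $\Rs$). The damping terms come from the linearization: the zeroth-order coefficients $\LZ,\LA,\LW$ along the characteristics. The key structural fact needed is a set of \emph{stability inequalities}: with suitable weights $\omega_i(R)$, the effective damping $\bar\Lambda_i + (\text{weight derivative terms})$, minus the coupling coefficients, is bounded below by some $\delta>0$ uniformly on $R\ge1$. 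The same must hold for the differentiated system, so that weighted $C^1$ norms decay like $e^{-\delta\tau}$.

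Since the Guderley profile is only known numerically-rigorously, I would verify these inequalities by a computer-assisted proof. This means interval-arithmetic enclosures of the profile ODE solution through the sonic point $\Rs$ (analytic power series near $\Rs$), then checking polynomial inequalities for the weights on a fine partition. I expect this to be the main obstacle. The difficulties are:
\begin{itemize}
\item the sonic point, where one characteristic speed vanishes and damping must come from the smooth-crossing eigenvalue condition;
\item the boundary coupling at the shock: the Rankine--Hugoniot map feeds $\tilde Z$ into the outgoing $\tilde W$ and $\tilde B$, and the linearized shock relation must have small enough gain that the weighted norms close;
\item the modulation ODE for $\crr$, which comes from differentiating \eqref{intro:RH} along the shock and must be shown to be damped toward $\crrb$.
\end{itemize}

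Finally I run a bootstrap. Assume the weighted $C^1$ norms of the perturbation, together with $|\crr-\crrb|$ and the modulation error, are bounded by $\varepsilon e^{-\delta\tau/2}$ on $[0,\tau_0]$. Integrating along characteristics, the stability inequalities control the linear part, while the nonlinear and $p_0$ terms are quadratic or exponentially small, which improves the bound to $\tfrac12\varepsilon e^{-\delta\tau/2}$. Local well-posedness of piecewise $C^1$ shock solutions (Majda's framework, with the compatibility conditions) then continues the solution for all $\tau\ge0$. Converting back to physical variables, $\crr\to\crrb$ exponentially means $\int_0^\infty(\crr-\crrb)\,d\tau$ converges. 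This gives
\[
t_\ast - t \simeq e^{-\tau}, \qquad \s \simeq (t_\ast-t)^{\crrb}, \qquad -\ds\simeq(t_\ast-t)^{\crrb-1},
\]
and boundedness of $U$ and $\Sigma$ with nonvanishing sup yields the stated blow-up rates. The density is scale invariant, so it stays bounded.
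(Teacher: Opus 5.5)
Your outline matches the paper's architecture: shock pinned at $R=1$, interior state preserved, the $p_0$-forcing decaying because $\Cu$ grows (since $\crrb<1$), a weighted $L^\infty$ bootstrap along characteristics, and computer-assisted profile inequalities. However, two load-bearing steps are set up in a way that would not close as stated.

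\emph{Modulation.} You keep $\crr$ dynamical and want an ODE showing it is damped toward $\crrb$, with the clock fixed by an unspecified second normalization. If that normalization is not tied to the shock (say $T=t_\ast-t$ with $t_\ast$ prescribed), time translation makes $\tcrr$ unstable. A Guderley solution with a mismatched blow-up time has $\tcrr$ growing like $e^{\tau}$, so there is no damping, and you would need a shooting argument the proposal does not contain.

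The paper ties the clock to the shock speed instead. It imposes $\crr\equiv\crrb$ and $\Cu=\Cb$, so $\ds=-\crrb\Cu$, and $t_\ast=t(\infty)$ is read off at the end. Then the undifferentiated traces $(\widetilde V,\widetilde Q,\widetilde K)(1^+)$ vanish identically when $p_0=0$, and the only modulation obeys no ODE at all. Inverting $\mathsf R_{\mathrm{RH}}$ in the differentiated Rankine--Hugoniot relations slaves it to the incoming trace: $\tcuu=\kappa_\gamma\tZ(1^+)+\mathcal O(p_0\Cu^{-2})$, together with $\tA(1^+)=b_A\tZ(1^+)$ and $\tW(1^+)=b_W\tZ(1^+)$ up to the same error.

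This feedback is linear with $O(1)$ coefficients; it enters the equations as $\tcuu\Ybar$. Your closing step (``nonlinear and $p_0$ terms are quadratic or exponentially small'') therefore does not cover it. It must be built into the linear inequalities, which is the role of the terms $\kappa_\gamma\overline{\rZ}$, $-\kappa_\gamma\overline{\rA}$, $\kappa_\gamma\overline{\rW}$ in $\Psi_Z$, $\mathcal R_A$, $\mathcal R_W$.

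\emph{Inequalities near the shock.} You ask for a uniform pointwise damping margin on all of $R\geq1$, at both the undifferentiated and differentiated levels. That is not what the paper proves or needs, and you would have to verify it separately. At $\Rs$, where $\LZ=0$, the damping of $\tZ$ comes from $\ell_{ZZ}$, whose term $-R\partial_R\LZ<0$ is produced by differentiating the transport operator.

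The paper bootstraps only $\Ytil$. It reconstructs $\widetilde V,\widetilde Q$ by integrating from the shock, which gives a further nonlocal linear forcing (the $\log R\,\mathcal J_i$ terms). It then uses the characteristic geometry, which your parenthetical has backwards: on $(1,\Rs)$ one has $\LZ<0$, so $Z$-characteristics run \emph{into} the shock.

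\begin{itemize}
\item On $[1,R_2]$ the $Z$-family therefore exits through both ends, so $\tZ$ there is driven only by initial data, and the pointwise condition $\Psi_Z<0$ suffices.
\item The $A$ and $W$ families enter from the shock carrying the gains $b_A$ and $|b_W|$. What is required is that their cumulative amplification over the finite residence time in $[1,R_2]$ stays below the relative weights, $G_A<1-\alpha$ and $G_W<\tfrac{11}{10}$ (the latter via a non-constant barrier $H$). No damping margin is needed for them.
\item Only beyond $R_2$, where all three speeds are positive, do the weight $\psi$ and $\overline{\Dmat}\to I$ provide the uniform damping you describe.
\end{itemize}

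Also note that $\LZ,\LA,\LW$ are transport speeds, not damping coefficients.
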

\begin{remark}[What is an implosion?] Informally, an implosion is a flow in which geometric focusing causes a gas to converge inward toward a point, compressing matter to infinite density or pressure in finite time. We make this notion mathematically precise, using Definition 1.3 in~\cite{CSV26}:

A solution of the compressible Euler equations on $\RR^d\times[-1,t_*)$ is called an \emph{implosion solution} with \emph{implosion center} $x_*\in\RR^d$ and \emph{implosion time} $t_*>-1$, if the following hold:
\begin{enumerate}[leftmargin=2em]
\item \emph{(Blowup of a primary flow variable).} As one approaches the spacetime point of collapse, $(x_*,t_*)$, at least one of the primary thermodynamic variables (density or pressure) becomes unbounded; that is, for every $\varepsilon\in(0,1]$ we have
\begin{equation*}
\limsup_{t\to t_*^-}\;\sup_{|x-x_*|<\varepsilon}
\tfrac{\rho(x,t)}{\|\rho(\cdot,-1)\|_{L^\infty(B_{100}(0))}}
+\limsup_{t\to t_*^-}\;\sup_{|x-x_*|<\varepsilon}
\tfrac{p(x,t)}{\|p(\cdot,-1)\|_{L^\infty(B_{100}(0))}}
=+\infty.
\end{equation*}
\item \emph{(Inward-focusing character).} The flow is compressive toward $x_*$; that is, there exists a time-dependent radius $\mathsf{r}\colon[-1,t_*)\to(0,1]$ such that
\begin{equation*}
\bigl(\uu(x,t)-\uu(x_*,t)\bigr)\cdot(x-x_*)\leq0,
\end{equation*}
for all $|x-x_*|\leq\mathsf{r}(t)$, and all $t\in[-1,t_*)$.
\item \emph{(No amplitude blowup away from the spacetime point of collapse).} The primary flow variables $(\rho,\uu,p)$ are bounded at all spacetime points which are away from $(x_*,t_*)$; that is, for any $\varepsilon\in(0,1]$ there exists $C_\varepsilon>0$ such that
\begin{equation*}
\sup_{\substack{(x,t)\in\RR^d\times[-1,t_*)\\
\varepsilon\leq|x-x_*|+|t-t_*|\leq1/\varepsilon}}
\tfrac{\rho(x,t)}{\|\rho(\cdot,-1)\|_{L^\infty(B_{100}(0))}}
+\tfrac{|\uu(x,t)|}{\|\uu(\cdot,-1)\|_{L^\infty(B_{100}(0))}}
+\tfrac{p(x,t)}{\|p(\cdot,-1)\|_{L^\infty(B_{100}(0))}}
\leq C_\varepsilon.
\end{equation*}
\end{enumerate}
\end{remark}

\begin{remark}[No other singularity before the implosion time]
We observe that in particular the solution $(\rho, u^r, p)$ does not form any other singularity before the implosion time $t_\ast$. Indeed, the $C^1$ estimates~\eqref{eq:main-self-similar-VQ-decay} imply that in the exterior region $r>\ss(t)$ the gradient is bounded for every $t<t_\ast$; in particular no other shock can form. At the same time, the interior region remains constant, and thus free of singularities.
\end{remark}

\begin{remark}[The modulation $\crr$ coefficient]
In the modulated self-similar analysis, we introduce the modulation functions $\cbb$ and $\cuu$. These modulation functions, as the self-similar time $\tau \to + \infty$, converge to the Guderley exponent $\crrb-1$. On the other hand, the modulation function $\crr$ is set to be identically equal to the Guderley exponent $\crrb$ (this is different from similar works using modulated self-similar analysis, see for instance~\cite[Remark~1.5]{CSV26}).
\end{remark}

\begin{remark}[Non-constant interior state]
A strategy similar to the one employed in this paper can be used to prove stability under non-constant small perturbations in the interior state. The key observation is  that one can obtain a priori estimates on the maximal development of the solution in the interior region independently of the outer region. For the outer region, then one obtains similar estimates from the Rankine--Hugoniot jump conditions to the ones we obtained here in Sections~\ref{subsec:rh-modulated} and~\ref{subsec:bootstrap-modulation}. For brevity, we do not pursue such a generalization in this paper.
\end{remark}

\subsection{Related literature}
We now review other examples of known implosions for the Euler equations (and related models) and their known stability properties.
\subsubsection{Isentropic smooth implosions}
The first example of smooth imploding solutions for the compressible Euler equations was constructed by Merle, Rapha{\"e}l, Rodnianski, and Szeftel~\cite{MRRS22a}; for almost every adiabatic exponent $\gamma>1$ they constructed an infinite sequence of self-similar imploding profiles. Buckmaster, Cao-Labora, and G\'omez-Serrano~\cite{BCG25} extended their work by constructing a profile for every $\gamma>1$, while, later on, Shao, Wei, Wang, and Zhang~\cite{SWWZ25} constructed an infinite sequence of implosion profiles for the three-dimensional monatomic gas $\gamma=\frac{5}{3}$. For these profiles, in radial symmetry, stability of finite codimension in a weighted Sobolev space was proven by Merle, Rapha{\"e}l, Rodnianski, and Szeftel~\cite{MRRS22b}. An analogous statement, but outside radial symmetry, was obtained by Cao-Labora, G\'omez-Serrano, Shi, and Staffilani~\cite{CGSS23}. Perturbing these profiles in a class of two-dimensional axisymmetric perturbations, Chen, Cialdea, Shkoller, and Vicol~\cite{CCSV24} constructed smooth solutions of compressible Euler exhibiting vorticity blowup. This work was subsequently extended by Chen~\cite{Chen25} for $d\ge3$. We observe that none of these works (both in and outside symmetry) provide any quantitative bound (lower or upper) on the dimension of the unstable manifold, in particular leaving open the question of full stability. Indeed, the numerical work of Biasi~\cite{Bia21} indicates that the profiles from~\cite{MRRS22a} are genuinely not stable in radial symmetry, and the nonlinear Euler evolution of some initial perturbation forms a shock before the implosion time.

We also note that these profiles and their associated stability properties have been used to establish blowup for other related models. The works of Merle, Rapha{\"e}l, Rodnianski, and Szeftel~\cite{MRRS22b}, Buckmaster, Cao-Labora, and G\'omez-Serrano~\cite{BCG25}, Cao-Labora, G\'omez-Serrano, Shi, and Staffilani~\cite{CGSS23}, and Shao, Wei, Wang, and Zhang~\cite{SWWZ25} established implosions for the barotropic compressible Navier-Stokes equations with linear viscosity. Merle, Rapha{\"e}l, Rodnianski, and Szeftel~\cite{MRRS22c} used these profiles to also prove finite time blowup for the energy-supercritical defocusing NLS equation. In another direction, Bedrossian, Chen, Gualdani, Ji, Vicol, and Yang~\cite{BCGJVY26} proved blowup for the Landau equation with very hard potentials.

\subsubsection{Non-isentropic smooth implosions} A different class of smooth implosions was discovered by Chen, Shkoller, and Vicol~\cite{CSV26}. As opposed to the profiles constructed by Merle, Rapha{\"e}l, Rodnianski, and Szeftel~\cite{MRRS22a}, these profiles are genuinely non-isentropic. In the same work, the authors were able to prove that the ground state profile from~\cite{CSV26} is stable to radially symmetric perturbations for the full nonlinear compressible Euler equations, modulo the one-dimensional compatibility condition of having vanishing pressure at the physical origin. For the same ground state, for the case of monatomic and diatomic gases, they were also able to provide, outside symmetry, a full characterization of the set of initial data that yield nonlinear stability.

\subsubsection{Other implosions} Other classes of implosions for the compressible Euler equations have been constructed by other authors. We notice here the collapsing-cavity example treated by Hunter~\cite{Hun60} and then Lazarus~\cite{Laz81}, the continuous isentropic implosions constructed by Jenssen~\cite{Jen25} and Shao, Wang, Wei, and Zhang~\cite{SWWZ26} and the non-isentropic shock implosions by Jenssen and Tsikkou~\cite{JK18}. We observe that no stability analysis is available for any of these results.

\subsubsection{Imploding singularities in related models}

Smooth self-similar imploding singularities have also been constructed for systems closely related to compressible Euler equations.

For the isothermal Euler--Poisson system, which governs the gravitational collapse of a self-gravitating gas, Guo, Had\v{z}i\'c, and Jang~\cite{GHJ21} rigorously constructed the \emph{Larson--Penston implosion profile}, a radially symmetric, $C^\infty$-smooth, globally self-similar imploding solution first postulated on astrophysical grounds by Larson~\cite{Lar69} and Penston~\cite{Pen69}. In contrast to the smooth isentropic implosions of~\cite{MRRS22a,BCG25,SWWZ25}, the Larson--Penston profile was expected to be \emph{stable} under radial perturbations (see, e.g.,~\cite{HN97,OP88}, and the discussion in~\cite{GHJS25}). This expectation was recently turned into a theorem by Guo, Had\v{z}i\'c, Jang, and Schrecker~\cite{GHJS25}, who established nonlinear dynamical stability of the Larson--Penston profile under radially symmetric perturbations.

For the relativistic Euler equations, Shao, Wei, and Zhang~\cite{SWZ24} constructed smooth imploding self-similar solutions and used them to prove finite-time singularity formation for the supercritical defocusing nonlinear wave equation with complex-valued solutions~\cite{SWZ25}. In a parallel direction, Buckmaster and Chen~\cite{BC24} constructed relativistic imploding profiles and used them to establish blowup in dimension $d=4$ for the nonlinearity $|u|^{p-1}u$ with $p=7$.

\section{Strategy of the proof}
We now discuss the strategy of the proof. We start by introducing the modulated self-similar variables that we will use in the proof of Theorem~\ref{thm:intro-main-physical}.

\subsection{Modulated self-similar variables in radial symmetry}

Let $\tau\in[0,\infty)$ denote the self-similar time. Introduce the modulation functions
\begin{equation*}
\crr,\cuu,\cbb\colon[0,\infty)\longrightarrow\RR,
\end{equation*}
and define the associated scaling factors
\begin{subequations}
\label{eq:scaling-factors}
\begin{align}
\Cr(\tau)&:=\Cr(0)\exp\left(-\int_0^\tau\crr(s)\,\dd s\right),
\label{eq:scaling-factor-r}\\
\Cu(\tau)&:=\Cu(0)\exp\left(-\int_0^\tau\cuu(s)\,\dd s\right),
\notag\\
\Cb(\tau)&:=\Cb(0)\exp\left(-\int_0^\tau\cbb(s)\,\dd s\right).
\notag
\end{align}
\end{subequations}
The physical time and radial coordinate are
\begin{subequations}
\label{eq:modulated-coordinates}
\begin{align}
t=t(\tau)&:=-1+\int_0^\tau\Cr(s)\Cu(s)^{-1}\,\dd s,
\label{eq:modulated-time}\\
R&:=r\Cr(\tau)^{-1}.
\label{eq:modulated-radius}
\end{align}
\end{subequations}
We then define the modulated self-similar variables $(U,\Sigma,B)$ by
\begin{subequations}
\label{eq:modulated-ansatz}
\begin{align}
u^r(r,t)&=\Cu(\tau)U(R,\tau),\label{eq:modulated-ansatz-u}\\
\sigma(r,t)&=\Cu(\tau)\Sigma(R,\tau),\label{eq:modulated-ansatz-sigma}\\
b(r,t)&=\Cb(\tau)B(R,\tau).\label{eq:modulated-ansatz-b}
\end{align}
\end{subequations}
Substitution of~\eqref{eq:modulated-ansatz} into \eqref{eq:euler:rad} gives
\begin{subequations}
\label{eq:modulated-euler}
\begin{align}
\partial_\tau\Sigma-\cuu\Sigma+\crr R\partial_R\Sigma
+U\partial_R\Sigma
+\alpha\Sigma\left(\partial_RU+\tfrac{d-1}{R}U\right)&=0,
\label{eq:modulated-euler-sigma}\\
\partial_\tau U-\cuu U+\crr R\partial_RU
+U\partial_RU+\alpha\Sigma\partial_R\Sigma
-\tfrac{\alpha}{\gamma}\Sigma^2\tfrac{\partial_RB}{B}&=0,
\label{eq:modulated-euler-u}\\
\partial_\tau B-\cbb B+\crr R\partial_RB+U\partial_RB&=0.
\label{eq:modulated-euler-b}
\end{align}
\end{subequations}
Given two similarity exponents $\crrb>0$ and $\cbbb \in \RR$, we say a solution $(\bar u, \bar \sigma, \bar b)$ is a globally self-similar implosion of~\eqref{eq:euler:rad} for $t<0$ if it is of the form\footnote{In the notation of Lazarus~\cite{Laz81}, and later used by~\cite{JK18,JT23,Jen25,JLS25a}, the exponent $\lambda$ corresponds to $\frac{1}{\crrb}$, while $\kappa$ corresponds to $\frac{\crrb-1-\cbbb}{\alpha\crrb}$.}
\begin{subequations}\label{bw:ss}
\begin{align} 
\bar u(r,t) &:= (-t)^{\crrb -1} \bar U\left( \tfrac{r}{(-t)^{\crrb}} \right), \\
 \bar \sigma(r,t) &:= (-t)^{\crrb-1} \bar \Sigma\left( \tfrac{r}{(-t)^{\crrb}} \right), \\ 
  \bar b(r,t) &:= (-t)^{\cbbb} \bar B\left( \tfrac{r}{(-t)^{\crrb}} \right).
\end{align}
\end{subequations}
This is equivalent to asking that $(\bar U, \bar \Sigma, \bar B)$ is a solution to~\eqref{eq:modulated-euler} independent of $\tau$ and with constant modulation functions
\begin{equation*}
(\crr,\cuu,\cbb)=(\crrb,\crrb-1,\cbbb).
\end{equation*}
Moreover, the system~\eqref{eq:modulated-euler} reduces to a system of ODEs for $(\bar U, \bar \Sigma, \bar B)$ in the variable $R$
\begin{subequations} \label{bw:ss:euler}
\begin{align}
 (1-\crrb) \bar \Sigma + \crrb R \pp_R \bar \Sigma + \bar U \pp_R \bar \Sigma + \alpha \bar \Sigma ( \pp_R \bar U + \tfrac{d-1}{R} \bar U) &=0,  \\
(1- \crrb) \bar U + \crrb R \pp_R \bar U + \bar U \pp_R \bar U + \alpha \bar \Sigma \pp_R \bar \Sigma - \tfrac{\alpha}{\gamma} \bar \Sigma^2 \tfrac{\pp_R \bar B}{\bar B}&=0, \\
- \cbbb \bar B + \crrb R \pp_R \bar B + \bar U \pp_R \bar B &=0. \label{bw:ss:euler:B}
\end{align}\end{subequations}
With $C_r(0)=C_u(0)=C_b(0)=1$, one has
\begin{equation*}
\begin{aligned}
\Cr(\tau)&=e^{-\crrb\tau},
&\Cu(\tau)&=e^{(1-\crrb)\tau},\\
\Cb(\tau)&=e^{-\cbbb\tau},
&t(\tau)&=-e^{-\tau},\\
R&=\tfrac{r}{(-t)^{\crrb}}.
\end{aligned}
\end{equation*}
In particular, the Guderley imploding shock wave is an exact globally self-similar solution with self-similar exponent $\crrb$ and $\cbbb=\crrb-1$.

\subsection{Strategy of the proof}
\subsubsection{Initial data and evolution in the interior region}
We are interested in studying the evolution of the Euler equations, with initial data at time $t=-1$, in physical variables given by
\begin{equation*}
\begin{aligned}
u(r,-1)&=0,\quad \rho(r,-1)=1,\quad p(r,-1)=p_0, &&r<1,
\\
u(r,-1)&=\bar u(r,-1)+\tilde u(r,-1), &&r>1,
\\
\rho(r,-1)&=\bar\rho(r,-1)+\tilde\rho(r,-1), &&r>1,
\\
\sigma(r,-1)&=\bar\sigma(r,-1)+\tilde\sigma(r,-1), &&r>1,
\end{aligned}
\end{equation*}
where the perturbation $(\tilde u, \tilde\rho, \tilde\sigma)$ is small enough and satisfies the shock compatibility conditions~\eqref{eq:initial-finite-pressure-shock-traces} and~\eqref{eq:initial-differentiated-compatibility}. By classical local well-posedness theory for shock fronts (see Majda~\cite{Maj83exist,Maj83stab} for the theory in Sobolev spaces and Li~\cite{Li80} and Dafermos~\cite{Daf10} for the one-dimensional theory in $L^\infty$-based spaces), such initial data admit a unique local shock solution with shock front $\ss$. In particular, by uniqueness, in the interior region $r<\ss(t)$ the solution must remain in the original quiescent state $u=0, \rho=1, p=p_0$ for all later times. As a consequence, to study the stability problem, we focus on the evolution in the exterior region $r>\ss(t)$, coupled with the Rankine--Hugoniot jump conditions~\eqref{intro:RH} at the shock front $\{r=\ss(t)\}$.

Then, the stability analysis of the profiles $(\ubar,\sigbar,\Bbar)$ in radial symmetry is carried out using modulated self-similar coordinates, using the transformation~\eqref{eq:modulated-coordinates}, \eqref{eq:modulated-ansatz} and the modulation functions $(\crr,\cuu,\cbb)$.
\subsubsection{Choice of the modulation functions}
We write $\widetilde U=U-\bar U$, $\widetilde\Sigma=\Sigma-\bar\Sigma$, and $\widetilde B=B-\bar B$ for the perturbations of the Guderley profiles.

The choice of the modulation functions $(\crr, \cuu, \cbb)$ is fundamental in order to close the stability argument. These functions are chosen so that:
\begin{itemize}
    \item the shock location $\ss(t)$ in the self-similar coordinates is constant $R=1$; this is achieved by imposing the condition~\eqref{eq:shock-freezing-gauge},
    \item the traces of the undifferentiated self-similar variables at the shock $(\tilde U, \tilde \Sigma, \tilde B)(1^+, \tau)$ decay as $\tau \to \infty$; this is achieved by enforcing $\crr=\crrb$ and $\tcuu = \tcbb$.
\end{itemize}
\subsubsection{The system in the outer region $R \ge 1$.}
As we explained earlier, in the stability analysis, we will focus on the exterior region (that is, $r>\ss(t)$, or, in modulated self-similar coordinates, $R>1$). Since $R$ is away from $0$, we define the renormalized variables $V$, $Q$, and $K$ as in~\eqref{eq:VQK-def}:
\begin{equation*}
V:=\tfrac{U}{R},\qquad
Q:=\tfrac{\Sigma}{R},\qquad
K:=\log\left(\tfrac{B}{R}\right).
\end{equation*}
The goal is to prove that, given initial data $(V,Q,K)|_{\tau=0}$ for $R>1$, close enough to the Guderley profiles $(\Vbar,\Qbar,\Kbar)$ and satisfying the compatibility conditions~\eqref{eq:initial-finite-pressure-shock-traces} and~\eqref{eq:initial-differentiated-compatibility}, the solution $(V,Q,K)\to(\Vbar,\Qbar,\Kbar)$ for $R>1$ and $(\crr,\cuu,\cbb)\to(\crrb,\crrb-1,\crrb-1)$. By writing the solution as
\begin{equation*}
\begin{aligned}
V&=\Vbar+\widetilde V,
&Q&=\Qbar+\widetilde Q,
&K&=\Kbar+\widetilde K,
\\
\crr&=\crrb,
&\cuu&=\crrb-1+\tcuu,
&\cbb&=\crrb-1+\tcbb,
\end{aligned}
\end{equation*}
we obtain a system for $(\widetilde V,\widetilde Q,\widetilde K)$ from~\eqref{eq:VQK-system}, where the modulation functions $\tcuu$ and $\tcbb$ enter through the source terms. The modulation functions are then chosen, as we mentioned, to anchor the shock at the location $R=1$, and to simplify the Rankine--Hugoniot jump conditions at the shock location. Schematically, after the choice of the modulation functions $(\crr,\cuu,\cbb)$ in~\eqref{eq:fixed-radial-equal-amplitude-consequences}, the system for $(\widetilde V,\widetilde Q,\widetilde K)$ is coupled with boundary conditions~\eqref{eq:finite-pressure-RH-tilded-traces} at $R=1$ of the form
\begin{equation*}
(\widetilde V,\widetilde Q,\widetilde K)(1^+,\tau)
=\makebox{exponentially decaying error term}.
\end{equation*}
We observe that if we were to choose $p_0=0$ (that is, we are in the cold gas case as in the original Guderley profile), the error on the right-hand side would vanish exactly. Also, the error on the right-hand side decays because, crucially, we have the inequality $\crrb<1$, which guarantees that $C_u(\tau)$ grows exponentially~\eqref{eq:nonlinear-bootstrap-Cu-growth}.
To study the stability properties for $R>1$, we observe that it is convenient to work with the differentiated Riemann-type variables of Chen, Shkoller, and Vicol (Section~3.6 in~\cite{CSV26}), defined in~\eqref{eq:ring-def} by 
\begin{equation*}
\begin{aligned}
\rZ&:=R\partial_RV-R\partial_RQ
+\tfrac{1}{\gamma}QR\partial_RK,
\\
\rA&:=\tfrac{\alpha}{\gamma}QR\partial_RK,
\\
\rW&:=R\partial_RV+R\partial_RQ
-\tfrac{1}{\gamma}QR\partial_RK.
\end{aligned}
\end{equation*}
We collect the differentiated variables into the vector $\Y:=(\rZ,\rA,\rW)^\top$. They satisfy the system~\eqref{eq:ring-evolution},
\begin{equation*}
\partial_\tau\Y+\Tmat R\partial_R\Y+\Dmat\Y+\Ncal(\Y,\Y)=0,
\end{equation*}
where the diagonal transport matrix $\Tmat$, the damping matrix $\Dmat$, and the bilinear form $\Ncal$ are defined in~\eqref{eq:ring-transport-matrix}, \eqref{eq:ring-damping-matrix}, and~\eqref{eq:ring-quadratic-form}, respectively.

We denote the corresponding Guderley vector by $\Ybar$ and introduce the perturbation $\Ytil:=\Y-\Ybar=(\tZ,\tA,\tW)^\top$. Subtracting the stationary equation for $\Ybar$ gives~\eqref{eq:ring-perturbation-system},
\begin{equation}
\partial_\tau\Ytil+\Tmat R\partial_R\Ytil+\Dmat\Ytil=-\widetilde{\Tmat} R\partial_R\Ybar-\widetilde{\Dmat}\Ybar-\Ncal(\Ybar,\Ytil)-\Ncal(\Ytil,\Ybar)-\Ncal(\Ytil,\Ytil).
\label{eq:intro-strategy-Y-system}
\end{equation}
Here $\widetilde{\Tmat}$ and $\widetilde{\Dmat}$ are the matrix perturbations defined in~\eqref{eq:ring-perturbation-matrices}. The transport matrix is diagonal, with
\begin{equation*}
\Tmat=\diag(\Lambda_1,\Lambda_2,\Lambda_3)=\diag(\crr+V-\alpha Q,\crr+V,\crr+V+\alpha Q).
\end{equation*}
This system is coupled with the boundary conditions~\eqref{eq:finite-pressure-RH-solved-feedback}, which can be schematically represented as:
\begin{subequations}\label{eq:intro-strategy-feedback}
\begin{align}
\tW(1^+,\tau)
&=b_W(\gamma)\tZ(1^+,\tau)
+\makebox{exponentially decaying error term},
\label{eq:intro-strategy-feedback-W}\\
\tA(1^+,\tau)
&=b_A(\gamma)\tZ(1^+,\tau)
+\makebox{exponentially decaying error term},
\label{eq:intro-strategy-feedback-A}\\
\tcuu(\tau)
&=\kappa_\gamma\tZ(1^+,\tau)
+\makebox{exponentially decaying error term},
\label{eq:intro-strategy-feedback-cu}
\end{align}
\end{subequations}
with the coefficients $b_W(\gamma)$, $b_A(\gamma)$, and $\kappa_\gamma$ given by~\eqref{eq:finite-pressure-RH-feedback-coefficients}.
If we ignore for the moment the nonlinear terms, we are left with studying the linear diagonal transport problem, coupled with the boundary conditions~\eqref{eq:intro-strategy-feedback} at $R=1$,
\begin{equation}
\partial_\tau\Ytil+\diag(\LZ,\LA,\LW)R\partial_R\Ytil=\mathcal L_{\mathrm{bg}}\Ytil+\tcuu\Ybar+\widetilde V\mathcal F_V+\alpha\widetilde Q\mathcal F_Q.
\label{eq:intro-strategy-linear-system}
\end{equation}
Here $\mathcal L_{\mathrm{bg}}$ and the forcing vectors $\mathcal F_V,\mathcal F_Q$ are obtained by linearizing the operator in~\eqref{eq:intro-strategy-Y-system}. They are defined in~\eqref{eq:profile-background-linearization} and~\eqref{eq:profile-forcing-vectors}. The starting point of the analysis is the collection of properties of the characteristic families $\bar \Lambda_j$:
\begin{itemize}
\item the characteristic speeds $\bar \Lambda_2$ and $\bar \Lambda_3$ have the outgoing property
\begin{equation} \label{eq:out:strat:aw}
\inf_{R\geq1}\LA(R)\geq\tfrac{\alpha\crrb}{1+\alpha}>0,\qquad
\inf_{R\geq1}\LW(R)\geq\tfrac{\alpha\crrb}{1+\alpha}>0,
\end{equation}
\item the characteristic speed $\bar \Lambda_1$ instead vanishes at a unique point $R_s>1$ (the sonic point); it's negative in the region next to the shock $1<R<R_s$ and positive for $R>R_s$. In particular, for any $R_2>R_s$ we have the outgoing property
\begin{equation*} 
\inf_{R\geq R_2}\LZ(R)>0.
\end{equation*}
\end{itemize}
The stability argument is then roughly split in two parts: the region between the shock and the sonic line $R_s>1$; and the region behind the sonic line.
\subsubsection{The stability in the region between the shock and the sonic point}
In the region between the shock and the sonic point, for $R<\Rs$, we identify three profile inequalities, one for each variable $\rZ$, $\rA$, and $\rW$, given in~\eqref{eq:three-profile-inequalities}, that guarantee decay of the perturbations inside this region (observe that for the linearized problem, by finite speed of propagation, and the sign of the characteristic speeds, this interior region is not influenced by $R>\Rs$). These three profile inequalities are proved using a computer-assisted argument; see Appendix~\ref{app:three-profile-interval-proof} for the details.
The $\rZ$ inequality~\eqref{eq:profile-inequality-Z} guarantees that the term $\ell_{ZZ}\tZ$ generates enough damping to dominate the remainder terms in the equation~\eqref{eq:intro-strategy-linear-system}. Here $\ell_{ZZ}$ denotes the $Z$-diagonal entry of $\mathcal L_{\mathrm{bg}}$. Decay for this variable then follows simply by integrating along the associated $\LZ$ characteristic. The $\rW$ and $\rA$ inequalities~\eqref{eq:profile-inequality-W} and~\eqref{eq:profile-inequality-A} instead guarantee that the perturbations $\tW$ and $\tA$, when integrated along the respective characteristics from the shock $R=1$, do not grow excessively before reaching the sonic location $\Rs$. In particular, for these two last variables, the outgoing properties~\eqref{eq:out:strat:aw} are crucial, since they guarantee that the associated characteristic families remain for a finite time inside the region $1<R<\Rs$. By continuity, the same arguments apply \emph{slightly behind} the sonic point $R=\Rs$, in a strip $1<R<R_2$ for some $R_2>\Rs$ close to $\Rs$.

\subsubsection{The role of the computer-assisted argument} As we mentioned, in order to prove stability in the region between the shock and the sonic point, we will use three inequalities, involving the profile and its derivatives, which we prove using interval arithmetic. In Appendix~\ref{app:three-profile-interval-proof}, we construct explicit upper and lower barriers (involving only polynomials and elementary functions like $\log$) for the Guderley profiles. Using the phase portrait equations~\eqref{eq:guderley:VQ-autonomous}, then each inequality~\eqref{eq:three-profile-inequalities} is reduced to checking that a complicated (but explicit) expression in $(\Vbar,\Qbar)$ does not vanish on the region of the phase portrait specified by the upper and lower barriers.  Each of these steps consists in checking the sign of an explicit expression, for which we use interval arithmetic. We refer the reader to Appendix~\ref{app:three-profile-interval-proof} for the mathematical construction and the derivations by hand, while we refer the reader to Appendix~\ref{app:three-profile-computation} for details about the computer-assisted implementation.

\subsubsection{The stability in the region behind the sonic point}
 In the region $R>R_2$ we instead take full advantage of the outgoing properties~\eqref{eq:nonlinear-bootstrap-nonlinear-outgoing-speeds}. We first introduce coordinates $R_1<R_2<R_3$. On the compact outgoing region $[R_2,R_3]$, we use the weight $\psi$ and the outgoing properties to generate damping $-\Tmat R\partial_R\log\psi\,(\psi\Ytil)$ in~\eqref{eq:nonlinear-bootstrap-weighted-perturbation-system}. For $R\geq R_3$, we take advantage of the damping provided by the matrix $\mathcal{D}$, after estimating the remaining terms using the decay properties of the profile~\eqref{eq:nonlinear-bootstrap-far-field-coefficients}. The construction of the weight $\psi$ and the estimates used here were inspired by the radial stability argument by Chen, Shkoller and Vicol in~\cite{CSV26} (see Section~3 there).

At the technical level, the proof is carried out with a standard continuity bootstrap argument, where the bootstrap assumptions involve the exponential decay in $\tau$ of the differentiated variables $\tZ, \tA, \tW$ and of the modulated variables $\tcuu, \tcbb$.

\begin{figure}[ht]
\centering
\includegraphics[width=0.60\textwidth]{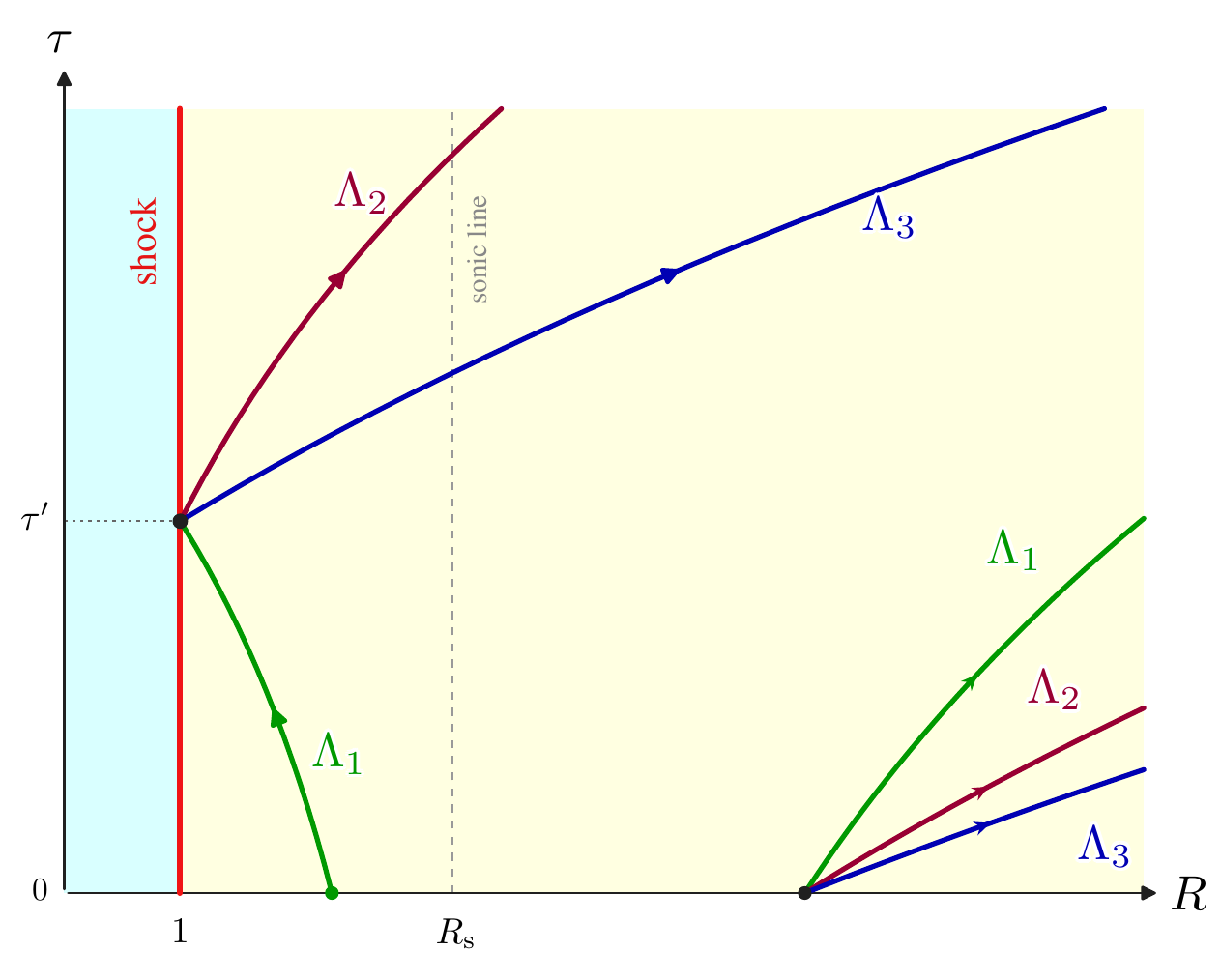}
\caption{\textbf{The Guderley characteristic geometry in self-similar variables.} The horizontal axis represents the self-similar coordinate $R$, while the vertical axis represents the self-similar time $\tau$. The red line is the shock $R=1$, which separates the interior region ahead of the shock (in light blue) and the exterior region behind the shock (in yellow).  The dashed line represents the sonic line at $R=\Rs$.  The green curve on the left is a $\Lambda_1$ characteristic curve starting from $\tau=0$ and reaching the shock at time $\tau'$. The magenta and blue curves on the left are $\Lambda_2$ and $\Lambda_3$ characteristics, respectively, emanating from the same point. On the right of the sonic line, in the same color scheme, we have three characteristics $\Lambda_1$, $\Lambda_2$ and $\Lambda_3$ escaping to infinity.}
\label{fig:guderley-shock-characteristics}
\end{figure}

\section{The Guderley imploding shock} \label{sec:guderley}
As we discussed in the introduction, the Guderley imploding shock wave is a globally self-similar imploding shock solution of the form~\eqref{bw:ss} with self-similar exponents
\begin{equation}
\cbbb=\crrb-1,
\qquad
0<\crrb<1.
\label{eq:guderley:exponents}
\end{equation}
The exponent $\crrb$ is not determined by dimensional analysis. Instead, it is selected by requiring that the solution of the self-similar system~\eqref{bw:ss:euler}, starting from the Rankine--Hugoniot data at the shock, pass regularly through the sonic point. Thus, $\crrb$ is determined through a nonlinear eigenvalue problem, which is classically solved numerically by a shooting procedure.

Using $\cbbb=\crrb-1$, we introduce its self-similar profile $\overline{\mathsf R}$ by
\begin{equation*}
\bar\rho(r,t)=\overline{\mathsf R}(R),
\qquad
\overline{\mathsf R}(R)
=\left(\tfrac{\alpha\bar\Sigma(R)}{\bar B(R)}\right)^{\frac{1}{\alpha}}
\quad\text{for }R>1.
\end{equation*}
The shock is fixed at $R=1$ in self-similar variables and, in physical variables, travels along
\begin{equation}
\s(t)=(-t)^{\crrb},
\qquad
\ds(t)=-\crrb(-t)^{\crrb-1}<0.
\label{eq:guderley:shock}
\end{equation}
The gas ahead of the shock, in the interior region $0\le r<\s(t)$, is cold and quiescent:
\begin{equation*}
u=0,
\qquad
\rho=1,
\qquad
p=\sigma=b=0.
\end{equation*}
Equivalently, throughout the interior self-similar region $0\le R<1$, the profiles are
\begin{equation}
\bar U(R)=0,
\qquad
\bar\Sigma(R)=0,
\qquad
\bar B(R)=0,
\qquad
\overline{\mathsf R}(R)=1.
\label{eq:guderley:interior-profiles}
\end{equation}
In the exterior region $r>\s(t)$, or equivalently $R>1$, the gas is described by the self-similar profiles solving~\eqref{bw:ss:euler}. Substituting the interior state into~\eqref{intro:RH:re}, with $\ds$ given by~\eqref{eq:guderley:shock}, yields the exterior traces
\begin{subequations}
\label{eq:guderley:profile-traces}
\begin{align}
\bar U(1^+)
&=-\tfrac{2\crrb}{\gamma+1},
&\overline{\mathsf R}(1^+)
&=\tfrac{\gamma+1}{\gamma-1},\label{eq:guderley:profile-traces-urho}\\
\bar\Sigma(1^+)
&=\tfrac{\crrb\sqrt{2\gamma(\gamma-1)}}{\alpha(\gamma+1)},
&\bar B(1^+)
&=\tfrac{\crrb\sqrt{2\gamma(\gamma-1)}}{\gamma+1}
\left(\tfrac{\gamma+1}{\gamma-1}\right)^{-\alpha}.
\label{eq:guderley:profile-traces-sigmab}
\end{align}
\end{subequations}
Together, \eqref{eq:guderley:interior-profiles} and~\eqref{eq:guderley:profile-traces} give the  values of the self-similar profiles on the two sides of the shock. They satisfy the Rankine--Hugoniot conditions~\eqref{intro:RH} and the Lax inequalities~\eqref{intro:lax}.

\subsection{The autonomous phase portrait}
Following the renormalization used in~\cite{CSV26}, for $R>1$ we introduce
\begin{equation}
\Vbar(R):=\tfrac{\bar U(R)}{R},
\qquad
\Qbar(R):=\tfrac{\bar\Sigma(R)}{R}.
\label{eq:guderley:VQ-def}
\end{equation}
Using~\eqref{bw:ss:euler:B} and the identity $\cbbb=\crrb-1$ we obtain
\begin{equation}
\tfrac{R\partial_R\bar B}{\bar B}
=\tfrac{\cbbb}{\crrb+\Vbar}
=\tfrac{\crrb-1}{\crrb+\Vbar}.
\label{eq:guderley:B-elimination}
\end{equation}
Substituting~\eqref{eq:guderley:VQ-def} and~\eqref{eq:guderley:B-elimination} into~\eqref{bw:ss:euler} yields the $2\times2$ ODE for $(\bar V,\bar Q)$
\begin{subequations}
\label{eq:guderley:VQ-system}
\begin{align}
&\bigl(1+(1+\alpha d)\Vbar\bigr)\Qbar
+(\crrb+\Vbar)R\partial_R\Qbar
+\alpha\Qbar R\partial_R\Vbar=0,
\label{eq:guderley:VQ-system-Q}\\
&(\crrb+\Vbar)
\bigl(\Vbar+\Vbar^2+\alpha\Qbar^2\bigr)
-\tfrac{\alpha(\crrb-1)}{\gamma}\Qbar^2
+\alpha(\crrb+\Vbar)\Qbar R\partial_R\Qbar
+(\crrb+\Vbar)^2R\partial_R\Vbar=0.
\label{eq:guderley:VQ-system-V}
\end{align}
\end{subequations}
Solving~\eqref{eq:guderley:VQ-system} for $R\partial_R\Vbar$ and $R\partial_R\Qbar$ gives the autonomous ODE
\begin{subequations}
\label{eq:guderley:VQ-autonomous}
\begin{align}
R\partial_R\Vbar
&=\tfrac{\Delta_{\Vbar}[\Vbar,\Qbar]}{\Delta[\Vbar,\Qbar]},
\label{eq:guderley:V-autonomous}\\
R\partial_R\Qbar
&=\tfrac{\Delta_{\Qbar}[\Vbar,\Qbar]}{\Delta[\Vbar,\Qbar]}.
\label{eq:guderley:Q-autonomous}
\end{align}
Here
\begin{align}
\Delta[\Vbar,\Qbar]
&:=(\crrb+\Vbar)(\crrb+\Vbar-\alpha\Qbar)
(\crrb+\Vbar+\alpha\Qbar),
\label{eq:guderley:Delta-def}\\
\Delta_{\Vbar}[\Vbar,\Qbar]
&:=(\crrb+\Vbar)
\left[
\alpha\Qbar^2\bigl(1+(1+\alpha d)\Vbar\bigr)
-(\crrb+\Vbar)\bigl(\Vbar+\Vbar^2+\alpha\Qbar^2\bigr)
+\tfrac{\alpha(\crrb-1)}{\gamma}\Qbar^2
\right],
\label{eq:guderley:Delta-V-def}\\
\Delta_{\Qbar}[\Vbar,\Qbar]
&:=\alpha\Qbar
\left[
(\crrb+\Vbar)\bigl(\Vbar+\Vbar^2+\alpha\Qbar^2\bigr)
-\tfrac{\alpha(\crrb-1)}{\gamma}\Qbar^2
\right]
-(\crrb+\Vbar)^2\bigl(1+(1+\alpha d)\Vbar\bigr)\Qbar.
\label{eq:guderley:Delta-Q-def}
\end{align}
\end{subequations}
\begin{samepage}
$(\Vbar,\Qbar)$ is then constructed by solving the autonomous ODE \eqref{eq:guderley:VQ-autonomous} on $R \ge 1$, with boundary conditions determined by the Rankine--Hugoniot jump conditions~\eqref{eq:guderley:profile-traces}:
\begin{equation}
\Vbar(1^+)=-\tfrac{2\crrb}{\gamma+1},
\qquad
\Qbar(1^+)=\tfrac{\crrb\sqrt{2\gamma(\gamma-1)}}{\alpha(\gamma+1)}.
\label{eq:guderley:VQ-shock-data}
\end{equation}
\end{samepage}
The exponent $\crrb$ is chosen in order to guarantee that at the point where $\Delta[\bar V, \bar Q]$ vanishes, $\Delta_{\bar V}[\bar V, \bar Q]$ and $\Delta_{\bar Q}[\bar V, \bar Q]$ vanish as well. An elementary computation reveals that in the region $\Qbar>0$, $\crrb+\Vbar>0$, only two such points exist.  At such a point, the vanishing of $\Delta$ gives 
\begin{equation*}
\Qbar=\tfrac{\crrb+\Vbar}{\alpha}.
\end{equation*}
Substituting into the equations $\Delta_{\bar V}[\bar V, \bar Q]=\Delta_{\bar Q}[\bar V, \bar Q]=0$ gives the quadratic equation
\begin{equation}
\begin{aligned}
0={}&\gamma(d-1)\bigl(\crrb+\Vbar\bigr)^2-\bigl(\crrb d\gamma-2\crrb\gamma+2\crrb+\gamma-2\bigr)
\bigl(\crrb+\Vbar\bigr)
+\crrb\gamma(1-\crrb).
\end{aligned}
\label{eq:guderley:sonic-quadratic}
\end{equation}

\begin{figure}[ht]
\centering
\includegraphics[width=0.60\textwidth]{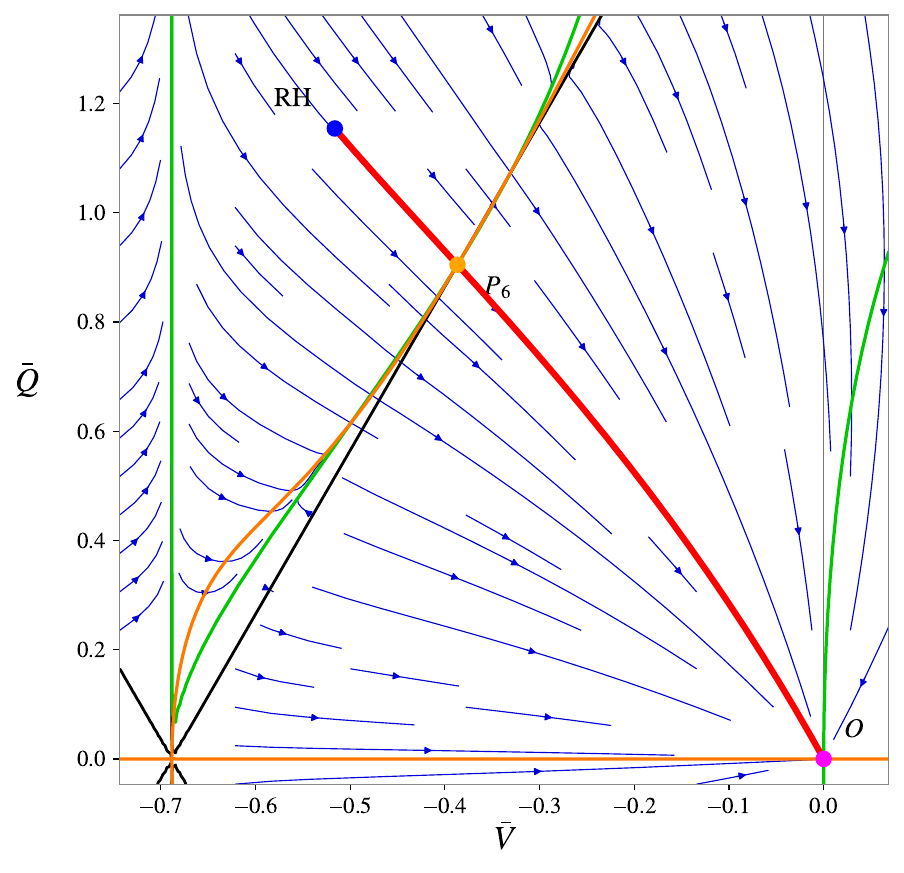}
\caption{\textbf{The Guderley implosion phase portrait for $\gamma=\frac{5}{3}$ and $d=3$.} The horizontal and vertical axes represent $\Vbar$ and $\Qbar$, respectively. The black curves represent $\{\Delta=0\}$, the green curves represent $\{\Delta_{\Vbar}=0\}$, and the orange curves represent $\{\Delta_{\Qbar}=0\}$. The red curve is the numerical Guderley trajectory from the blue Rankine--Hugoniot point $\mathrm{RH}$, through the orange triple point $P_6$, and to the magenta origin. The blue curves are streamlines of the autonomous phase field.}
\label{fig:guderley-phase-portrait}
\end{figure}
In the notation of~\cite{JLS25b}\footnote{In the notation of~\cite{JLS25b}, the variables $V,C,x$ correspond respectively to $\frac{\Vbar}{\crrb}$, $\frac{\alpha\Qbar}{\crrb}$, and $-R^{-\frac{1}{\crrb}}$, while $m=d-1$ and $z=\frac{\crrb^{-1}-1}{(d-1)\gamma}$.} (see for instance Lemmas~2.2 and~2.6 there), the two physical roots of~\eqref{eq:guderley:sonic-quadratic} correspond to the points $P_6=(\Vbar_6, \Qbar_6)$ and $P_8=(\bar V_8, \bar Q_8)$.  In the range $1<\gamma\le\frac{5}{3}$, only $P_6$ is relevant for the Guderley analysis; see Proposition~4.7 in~\cite{JLS25b}. We therefore record only its coordinates
\begin{subequations}
\label{eq:guderley:sonic-values}
\begin{align}
\Vbar_6
&=\tfrac{\gamma-2+2\crrb-\crrb d\gamma
-\sqrt{\bigl(\crrb d\gamma-2\crrb\gamma+2\crrb+\gamma-2\bigr)^2
-4\gamma^2(d-1)\crrb(1-\crrb)}}
{2\gamma(d-1)},
\label{eq:guderley:sonic-V-value}\\
\Qbar_6
&=\tfrac{\crrb d\gamma-2\crrb\gamma+2\crrb+\gamma-2
-\sqrt{\bigl(\crrb d\gamma-2\crrb\gamma+2\crrb+\gamma-2\bigr)^2
-4\gamma^2(d-1)\crrb(1-\crrb)}}
{2\alpha\gamma(d-1)}.
\notag
\end{align}
\end{subequations}

\subsection{Properties of the Guderley profiles}
We now state the rigorous existence theorem (in the adiabatic range under consideration) proved in~\cite{JLS25b}, together with monotonicity properties and asymptotic properties of the profiles. We start by recording the monotonicity properties from Theorem~2.9 and Proposition~8.1 in~\cite{JLS25b} and the asymptotic analysis from Lemma~2.7 in~\cite{JLS25a}.

\begin{theorem}[Theorem~2.9, Proposition~8.1 in~\cite{JLS25b} and Lemma~2.7 in~\cite{JLS25a}]
\label{lem:guderley-profile-monotonicity-decay}
Let $d=3$. For every $1 < \gamma \le \frac{5}{3}$ there exists a unique self-similar exponent $\crrb$ such that the ODE~\eqref{eq:guderley:VQ-autonomous} with initial condition~\eqref{eq:guderley:VQ-shock-data} admits an analytic solution $(\bar V, \bar Q)(R)$ on $[1,+\infty)$ satisfying the monotonicity and decay properties stated below. For every $R \ge 1$, the Guderley profiles satisfy
\begin{equation*}
\partial_R\Vbar(R)>0,
\quad
-\tfrac{2\crrb}{\gamma+1} \le \Vbar(R)<0,
\quad
\partial_R\Qbar(R)<0,
\quad
0<\Qbar(R) \le \tfrac{\crrb\sqrt{2\gamma(\gamma-1)}}{\alpha(\gamma+1)}.
\end{equation*}
For all $k \ge 0$, we have the following decay at infinity
\begin{equation} \label{eq:guderley-profile-decay}
\pp_R^k \Vbar(R)=\mathcal{O}\left(R^{-\frac{1}{\crrb}-k}\right),
\qquad
\pp_R^k \Qbar(R)=\mathcal{O}\left(R^{-\frac{1}{\crrb}-k}\right).
\end{equation}
Moreover, there exists a unique $\Rs>1$ such that $(\bar V, \bar Q)(\Rs)=P_6$. 
\end{theorem}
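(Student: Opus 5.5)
This theorem is a compilation of results from \cite{JLS25b} and \cite{JLS25a}, so the proof is mainly a translation between their variables and ours. The main bookkeeping tool is the dictionary recorded in the footnote. Writing $V_J, C_J, x$ for the variables of \cite{JLS25b}, we have
\begin{equation*}
V_J=\Vbar/\crrb,\qquad C_J=\alpha\Qbar/\crrb,\qquad x=-R^{-1/\crrb}.
\end{equation*}
In these variables $R\partial_R=-\tfrac{1}{\crrb}\,x\partial_x$, and the point $R=1$ corresponds to $x=-1$, while $R\to\infty$ corresponds to $x\to0^-$.

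\textbf{Step 1: the ODE and boundary data.} I would check that \eqref{eq:guderley:VQ-autonomous} is transformed, under this change of variables, into the Guderley system of \cite{JLS25b} with $m=d-1=2$ and $z=\frac{\crrb^{-1}-1}{(d-1)\gamma}$. This is a direct substitution, using \eqref{eq:guderley:B-elimination} to eliminate $\bar B$. I would then check that the shock data \eqref{eq:guderley:VQ-shock-data} become the Rankine--Hugoniot point of \cite{JLS25b}, namely
\begin{equation*}
V_J=-\tfrac{2}{\gamma+1},\qquad C_J=\tfrac{\sqrt{2\gamma(\gamma-1)}}{\gamma+1}.
\end{equation*}
The remaining checks in this step are that \eqref{eq:guderley:sonic-quadratic} matches their sonic-point equation and that $P_6$ corresponds to their $P_6$.

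\textbf{Step 2: existence, uniqueness and the sonic point.} With the dictionary in place, Theorem~2.9 of \cite{JLS25b} gives, for $1<\gamma\le\frac53$, a unique exponent $z$ (hence a unique $\crrb$) for which the trajectory from the RH point passes analytically through $P_6$ (Proposition~4.7 there) and reaches the origin as $x\to0$. This yields an analytic solution on $[1,\infty)$ together with a unique $\Rs$ with $(\Vbar,\Qbar)(\Rs)=P_6$. Uniqueness of $\Rs$ follows because the trajectory is monotone, so it meets $\Delta=0$ only once. The sign conventions need care here, since $x$ increases exactly when $R$ increases.

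\textbf{Step 3: monotonicity and bounds.} Proposition~8.1 of \cite{JLS25b} gives that $V_J$ increases and $C_J$ decreases along the trajectory in the direction of $x$. Since $dx/dR>0$, this gives $\partial_R\Vbar>0$ and $\partial_R\Qbar<0$. The bounds then follow from monotonicity together with the endpoint values: the upper values at $R=1$ come from the RH data, and the limits $\Vbar\to0^-$, $\Qbar\to0^+$ as $R\to\infty$ come from the trajectory converging to the origin.

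\textbf{Step 4: decay.} Lemma~2.7 of \cite{JLS25a} gives that near the origin the solution is analytic in $x$ and vanishes at $x=0$, with $V_J,C_J=O(|x|)$. Composing with $x=-R^{-1/\crrb}$ gives $\Vbar,\Qbar=O(R^{-1/\crrb})$. For derivatives, each application of $\partial_R=-\frac{1}{\crrb R}x\partial_x$ costs one factor of $R^{-1}$ and preserves the order $O(|x|)$, because analyticity in $x$ at $0$ is preserved by $x\partial_x$. Induction on $k$ then yields \eqref{eq:guderley-profile-decay}.

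\textbf{Main obstacle.} The analysis itself is entirely contained in the cited works. The main risk is the bookkeeping in Step 1: making sure the normalizations of $\alpha$, $\gamma$ and $d$ and the direction of the independent variable agree, so that the cited monotonicity statements transfer with the correct signs.
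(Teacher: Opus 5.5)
Your route is the paper's. You import the statement from \cite{JLS25b} (Theorem~2.9, Proposition~8.1) and \cite{JLS25a} (Lemma~2.7) through the footnoted dictionary, and Steps~1--3 are fine. Two small remarks on those steps. Uniqueness of $\Rs$ already follows from strict monotonicity of $\Vbar$ alone. The Rankine--Hugoniot values at $R=1$ give the \emph{lower} bound for $\Vbar$ and the upper bound for $\Qbar$, not ``upper values'' for both.

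Step~4 does not go through as written for $k\ge2$. Your induction relies on the trajectory being analytic in $x$ at $x=0$, which you attribute to Lemma~2.7 of \cite{JLS25a}. As the paper notes, that lemma only gives the decay for $k=0,1$, so the higher derivatives need their own argument.

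The paper supplies it by differentiating the autonomous system. Write $Y=(\Vbar,\Qbar)$ and $F:=(\Delta_{\Vbar},\Delta_{\Qbar})/\Delta$. Since $\Delta[0,0]=\crrb^3\neq0$ while $\Delta_{\Vbar},\Delta_{\Qbar}$ vanish at the origin, $F$ is analytic near $(0,0)$ with $F(0,0)=0$. Then $(R\partial_R)^jY=F_j(Y)$, where $F_1=F$ and $F_{j+1}=DF_j\,F$, and every $F_j$ is analytic and vanishes at the origin. Hence $(R\partial_R)^jY=\Ocal(|Y|)=\Ocal(R^{-1/\crrb})$, which uses only the $k=0$ decay. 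The identity $R^k\partial_R^k=\prod_{i=0}^{k-1}(R\partial_R-i)$ then yields \eqref{eq:guderley-profile-decay}.

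If you want to keep your formulation, you must prove the analyticity yourself. In the variable $x$ the system reads $x\partial_x Y=Y+\Ocal(|Y|^2)$. This is a non-resonant node in the Poincar\'e domain, so it is analytically linearizable, and every trajectory entering the origin is analytic in $x$ and vanishes at $x=0$. Either repair closes the gap; the paper's is more elementary.
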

In Lemma~2.7 in~\cite{JLS25a}, the authors prove~\eqref{eq:guderley-profile-decay} only for $k=0,1$; the decay estimates for higher derivatives follow likewise by differentiating the autonomous system~\eqref{eq:guderley:VQ-autonomous}.

By taking advantage of the monotonicity of $\Vbar$, it is convenient to regard $\Qbar$ as a function of $\Vbar$. Lemma~\ref{lem:guderley-profile-monotonicity-decay} shows that
\begin{equation*}
\Vbar\colon[1,\infty)\longrightarrow
\left[-\tfrac{2\crrb}{\gamma+1},0\right)
\end{equation*}
is strictly increasing. We therefore define
\begin{equation*}
\mathcal{Q}(v):=\Qbar\bigl(\Vbar^{-1}(v)\bigr),
\qquad
-\tfrac{2\crrb}{\gamma+1}\leq v<0,
\end{equation*}
or, equivalently,
\begin{equation*}
\Qbar(R)=\mathcal{Q}(\Vbar(R)),
\qquad R\geq1.
\end{equation*}
By~\eqref{eq:guderley-profile-decay}, setting $\mathcal{Q}(0):=0$ extends $\mathcal{Q}$ continuously to $[-\frac{2\crrb}{\gamma+1},0]$, with endpoint values
\begin{equation*}
\mathcal{Q}\left(-\tfrac{2\crrb}{\gamma+1}\right)
=\tfrac{\crrb\sqrt{2\gamma(\gamma-1)}}{\alpha(\gamma+1)},
\qquad
\mathcal{Q}(0)=0.
\end{equation*}
Thus,
\begin{equation*}
\mathcal{Q}\colon
\left[-\tfrac{2\crrb}{\gamma+1},0\right]
\longrightarrow
\left[0,\tfrac{\crrb\sqrt{2\gamma(\gamma-1)}}{\alpha(\gamma+1)}\right]
\end{equation*}
is strictly decreasing. 

We observe that the first characteristic speed
\begin{equation*}
\bar\Lambda_1(R):=\crrb+\Vbar(R)-\alpha\Qbar(R)
\end{equation*}
is strictly increasing, since
\begin{equation}
\partial_R\bar\Lambda_1(R)
=\partial_R\Vbar(R)-\alpha\partial_R\Qbar(R)>0.
\label{eq:guderley:first-characteristic-monotonicity}
\end{equation}
Moreover,
\begin{equation*}
\bar\Lambda_1(1^+)
=\tfrac{\crrb}{\gamma+1}
\left(\gamma-1-\sqrt{2\gamma(\gamma-1)}\right)<0,
\qquad
\lim_{R\to\infty}\bar\Lambda_1(R)=\crrb>0.
\end{equation*}
Consequently, there exists a unique sonic coordinate $\Rs>1$ such that
\begin{equation}
\bar\Lambda_1(\Rs)=0.
\label{eq:guderley:unique-sonic-point}
\end{equation}

\subsubsection{The sonic point $P_6$}

At the sonic point $P_6$ we can formally compute the Taylor series of $\mathcal{Q}$. Such a Taylor series was constructed, and shown to converge, in Theorem~3.8 in~\cite{JLS25b}. Thus, for $v$ sufficiently close to $\bar V_6$, we have
\begin{equation*}
\mathcal{Q}(v)=\Qbar_6
+\sum_{\ell=1}^{\infty}\eta_\ell
\bigl(v-\Vbar_6\bigr)^\ell.
\end{equation*}
Here
\begin{equation*}
\eta_\ell
:=\tfrac{1}{\ell!}\mathcal{Q}^{(\ell)}(\Vbar_6),
\qquad \ell\geq 1.
\end{equation*}
The first Taylor coefficient is the negative root (see equation~(3.18) in~\cite{JLS25b})
\begin{equation}
\eta_1
=\mathcal{Q}'(\Vbar_6)
=\left.
\tfrac{
\partial_{\Qbar}\Delta_{\Qbar}-\partial_{\Vbar}\Delta_{\Vbar}
+\sqrt{\bigl(\partial_{\Vbar}\Delta_{\Vbar}
-\partial_{\Qbar}\Delta_{\Qbar}\bigr)^2
+4\partial_{\Qbar}\Delta_{\Vbar}
\partial_{\Vbar}\Delta_{\Qbar}}}
{2\partial_{\Qbar}\Delta_{\Vbar}}
\right|_{(\Vbar,\Qbar)=(\Vbar_6,\Qbar_6)}<0.
\label{eq:guderley:sonic-phase-slope}
\end{equation}
 Evaluating~\eqref{eq:guderley:VQ-system-Q} at $R=\Rs$ then gives
\begin{equation*}
\begin{aligned}
\Rs\partial_R\Vbar(\Rs)
&=-\tfrac{1+(1+\alpha d)\Vbar_6}
{\alpha(1+\eta_1)},
\\
\Rs\partial_R\Qbar(\Rs)
&=\eta_1\Rs\partial_R\Vbar(\Rs).
\end{aligned}
\end{equation*}
Here $1+\eta_1\neq0$, as follows from Theorem~2.9(ii) and Lemma~2.6 in~\cite{JLS25b}. The higher Taylor coefficients can be computed recursively from~\eqref{eq:guderley:VQ-autonomous}:
\begin{subequations}
\label{eq:guderley:sonic-Taylor-expansion}
\begin{equation*}
\Delta_{\Vbar}[v,\mathcal{Q}(v)]\mathcal{Q}'(v)
=\Delta_{\Qbar}[v,\mathcal{Q}(v)].
\end{equation*}
For every $\ell\geq2$, differentiating $\ell$ times at $v=\Vbar_6$ and isolating $\eta_\ell$ gives
\begin{align}
&\ell!\eta_\ell
\left.
\bigl[\ell\partial_{\Vbar}\Delta_{\Vbar}
+(\ell+1)\eta_1\partial_{\Qbar}\Delta_{\Vbar}
-\partial_{\Qbar}\Delta_{\Qbar}\bigr]
\right|_{(\Vbar,\Qbar)=(\Vbar_6,\Qbar_6)}
\notag\\
&\quad=\tfrac{\dd^\ell}{\dd v^\ell}
\Biggl\{
\Delta_{\Qbar}\left[v,\Qbar_6+
\sum_{j=1}^{\ell-1}\eta_j(v-\Vbar_6)^j\right]
\notag\\
&\qquad\quad-
\left(\sum_{j=1}^{\ell-1}j\eta_j(v-\Vbar_6)^{j-1}\right)
\Delta_{\Vbar}\left[v,\Qbar_6+
\sum_{j=1}^{\ell-1}\eta_j(v-\Vbar_6)^j\right]
\Biggr\}\bigg|_{v=\Vbar_6}.
\label{eq:guderley:sonic-coefficient-recurrence}
\end{align}
\end{subequations}
The right-hand side depends only on $\eta_1,\ldots,\eta_{\ell-1}$, and the coefficient in front of $\eta_\ell$ is nonzero at $P_6$ by~Lemmas~3.4--3.5 in~\cite{JLS25b}. Thus the recurrence uniquely determines each coefficient.

\section{Modulated self-similar and differentiated variables}
\label{sec:modulated-ring-variables}
To conduct the stability analysis we will use the modulated self-similar variables introduced in~\eqref{eq:modulated-coordinates}--\eqref{eq:modulated-ansatz}. In particular we will consider renormalized and differentiated versions of the variables $(U, \Sigma, B)$, which will be the objects of this section.

\subsection{Renormalized and differentiated variables}

Following~\cite{CSV26}, in the region $R\ge1$ we introduce
\begin{equation}
U(R,\tau)=RV(R,\tau),
\qquad
\Sigma(R,\tau)=RQ(R,\tau),
\qquad
B(R,\tau)=RH(R,\tau),
\qquad
K:=\log H.
\label{eq:VQK-def}
\end{equation}
The system~\eqref{eq:modulated-euler} becomes
\begin{subequations}
\label{eq:VQK-system}
\begin{align}
\partial_\tau Q
&+\bigl(\crr-\cuu+(1+\alpha d)V\bigr)Q
+(\crr+V)R\partial_RQ
+\alpha QR\partial_RV=0,
\label{eq:Q-eq}\\
\partial_\tau V
&+(\crr-\cuu)V+V^2+\tfrac{2\alpha^2}{\gamma}Q^2
+(\crr+V)R\partial_RV
+\alpha QR\partial_RQ
-\tfrac{\alpha}{\gamma}Q^2R\partial_RK=0,
\label{eq:V-eq}\\
\partial_\tau K
&+(\crr-\cbb+V)+(\crr+V)R\partial_RK=0.
\label{eq:K-eq}
\end{align}
\end{subequations}
The physical density is recovered through
\begin{equation}
\rho(r,t)
=\left(\tfrac{\Cu(\tau)}{\Cb(\tau)}\right)^{\frac{1}{\alpha}}
\left(\alpha Q(R,\tau)e^{-K(R,\tau)}\right)^{\frac{1}{\alpha}}.
\label{eq:density-VQK}
\end{equation}
We will indicate the associated Guderley profiles with
\begin{equation}
(V,Q,K)=(\Vbar,\Qbar,\Kbar),
\qquad
\Kbar(R):=\log\left(\tfrac{\Bbar(R)}{R}\right).
\label{eq:guderley-VQK-background}
\end{equation}
We write perturbations of the background Guderley profile as
\begin{equation}
V=\Vbar+\widetilde V,
\qquad
Q=\Qbar+\widetilde Q,
\qquad
K=\Kbar+\widetilde K.
\label{eq:VQK-perturbations}
\end{equation}
For the entropy, we may also write:
\begin{equation*}
\widetilde B:=B-\Bbar.
\end{equation*}
We also set $\tcrr:=\crr-\crrb$.

Following Section~3.6 in~\cite{CSV26},  we define the differentiated Riemann-type variables
\begin{equation}
\label{eq:ring-def}
\rZ:=R\partial_RV-R\partial_RQ
+\tfrac{1}{\gamma}QR\partial_RK,
\quad
\rA:=\tfrac{\alpha}{\gamma}QR\partial_RK,
\quad
\rW:=R\partial_RV+R\partial_RQ
-\tfrac{1}{\gamma}QR\partial_RK.
\end{equation}
These are equivalent to
\begin{equation}
\label{eq:ring-inverse}
R\partial_RV=\tfrac{1}{2}(\rZ+\rW),
\qquad
R\partial_RQ=-\tfrac{1}{2}\rZ+\tfrac{1}{\alpha}\rA
+\tfrac{1}{2}\rW,
\qquad
R\partial_RK=\tfrac{\gamma}{\alpha Q}\rA.
\end{equation}
We will use
\begin{equation*}
\Y:=(\rZ,\rA,\rW)^\top.
\end{equation*}
Applying $R\partial_R$ to~\eqref{eq:VQK-system} and diagonalizing gives
\begin{equation}
\partial_\tau\Y+\Tmat R\partial_R\Y+\Dmat\Y+\Ncal(\Y,\Y)=0,
\label{eq:ring-evolution}
\end{equation}
where the diagonal transport matrix is
\begin{equation}
\Tmat:=(\crr+V)I+\alpha Q\diag(-1,0,1),
\label{eq:ring-transport-matrix}
\end{equation}
and the damping matrix is
\begin{equation}
\Dmat:=(\crr-\cuu)I+VD^{(v)}+\alpha QD^{(q)},
\label{eq:ring-damping-matrix}
\end{equation}
with
\begin{subequations}
\label{eq:ring-constant-matrices}
\begin{align}
D^{(v)}&:=
\begin{pmatrix}
\tfrac{3+\alpha d}{2} & 0 & \tfrac{1-\alpha d}{2}\\
0 & 1+\alpha d & 0\\
\tfrac{1-\alpha d}{2} & 0 & \tfrac{3+\alpha d}{2}
\end{pmatrix},
\label{eq:ring-Dv}\\
D^{(q)}&:=
\begin{pmatrix}
-\tfrac{d+2}{2} & \tfrac{4}{\gamma}
&-\tfrac{(d+2)+2\alpha(d-2)}{2\gamma}\\
\tfrac{1}{2\gamma} & 0 & \tfrac{1}{2\gamma}\\
\tfrac{(d+2)+2\alpha(d-2)}{2\gamma}
&\tfrac{4}{\gamma} & \tfrac{d+2}{2}
\end{pmatrix}.
\label{eq:ring-Dq}
\end{align}
\end{subequations}
For $i\in\{1,2,3\}$, the components of the quadratic bilinear form $\mathcal{N}$ are
\begin{equation}
\begin{aligned}
\Ncal_i(\Y',\Y'')
={}&\left(
\tfrac{1+(2-i)\alpha}{2}Y'_1+(i-2)Y'_2
+\tfrac{1+(i-2)\alpha}{2}Y'_3
\right)Y''_i\\
&+\left(\tfrac{2-i}{2}+\one_{i=2}\tfrac{\alpha}{2}\right)
(Y'_1+Y'_3)Y''_2.
\label{eq:ring-quadratic-form}
\end{aligned}
\end{equation}
Finally, if we let $\Ybar$ denote the Riemann variables obtained from $(\Vbar,\Qbar,\Kbar)$ through~\eqref{eq:ring-def}, we set
\begin{equation}
\Ytil:=\Y-\Ybar=(\tZ,\tA,\tW)^\top.
\label{eq:ring-perturbation}
\end{equation}
Subtracting the stationary Guderley equation from~\eqref{eq:ring-evolution} gives the equations for the perturbation $\Ytil$
\begin{equation}
\partial_\tau\Ytil
+\Tmat R\partial_R\Ytil
+\Dmat\Ytil
=-\widetilde{\Tmat} R\partial_R\Ybar
-\widetilde{\Dmat}\Ybar
-\Ncal(\Ybar,\Ytil)
-\Ncal(\Ytil,\Ybar)
-\Ncal(\Ytil,\Ytil).
\label{eq:ring-perturbation-system}
\end{equation}
Here
\begin{subequations}
\label{eq:ring-perturbation-matrices}
\begin{align} 
\widetilde{\Tmat}
&:=(\tcrr+\widetilde V)I
+\alpha\widetilde Q\diag(-1,0,1),
\label{eq:ring-perturbation-Ttilde}\\
\widetilde{\Dmat}
&:=(\tcrr-\tcuu)I
+\widetilde V D^{(v)}
+\alpha\widetilde QD^{(q)}.
\label{eq:ring-perturbation-Dtilde}
\end{align}
\end{subequations}

\subsection{Choice of the modulation functions}
We now choose the modulation functions so that the shock remains at $R=1$ in the modulated self-similar radial coordinate~\eqref{eq:modulated-radius}. This is equivalent to
\begin{equation}
\ss(t(\tau))=\Cr(\tau).
\label{eq:shock-freezing-gauge}
\end{equation}
Differentiating this identity in $\tau$ and using~\eqref{eq:scaling-factor-r} and~\eqref{eq:modulated-time} gives
\begin{equation}
\ds(t(\tau))
=\tfrac{\partial_\tau\Cr(\tau)}{\partial_\tau t(\tau)}
=-\crr(\tau)\Cu(\tau).
\label{eq:shock-freezing-speed}
\end{equation}
The next two modulation choices will simplify the Rankine--Hugoniot conditions for the variables $(\widetilde V,\widetilde Q,\widetilde K)$ derived below:
\begin{equation}
\tcrr\equiv0,
\qquad
\Cu=\Cb.
\label{eq:cr:0}
\end{equation}
This implies
\begin{equation}
\crr=\crrb,
\qquad
\cuu=\cbb=\crrb-1+\tcuu.
\label{eq:fixed-radial-equal-amplitude-consequences}
\end{equation}
From~\eqref{eq:cr:0} and~\eqref{eq:shock-freezing-speed} at $\tau=0$ we obtain
\begin{equation*}
C_u(0)=C_b(0)=-\tfrac{\ds(t(0))}{\crrb}.
\end{equation*}
Since $\ss(-1)=1$, we have
\begin{equation*}
C_r(0)=1.
\end{equation*}

\subsubsection{Rankine--Hugoniot conditions in self-similar variables}
\label{subsec:rh-modulated}
Using~\eqref{eq:cr:0}, the shock speed~\eqref{eq:shock-freezing-speed} becomes
\begin{equation}
\ds(t(\tau))=-\crrb\Cu(\tau).
\label{eq:fixed-shock-speed}
\end{equation}
At $R=1$, we have $U=V$, $\Sigma=Q$, and $B=e^K$ by~\eqref{eq:VQK-def}. Since $\Cu=\Cb$, the density formula~\eqref{eq:density-VQK} reduces to $ \rho=\left(\alpha Qe^{-K}\right)^{\frac{1}{\alpha}}. $ Using $p=\frac{\alpha^2}{\gamma}\rho\sigma^2$, we substitute \eqref{eq:modulated-ansatz} and~\eqref{eq:fixed-shock-speed} into the Rankine--Hugoniot jump conditions~\eqref{intro:RH}. Simplifying $\Cu$ gives the following jumps at $R=1$
\begin{subequations}
\label{eq:fixed-shock-RH-conservative}
\begin{align}
-\crrb\jump{\rho V}
&=\jump{\rho V^2+\tfrac{\alpha^2}{\gamma}\rho Q^2},\\
-\crrb\jump{\rho}
&=\jump{\rho V},\\
-\crrb\jump{\rho\left(V^2+\tfrac{\alpha}{\gamma}Q^2\right)}
&=\jump{\rho V\left(V^2+\alpha Q^2\right)}.
\end{align}
\end{subequations}
Fix now a constant $1\gg p_0 \ge 0$. We fix the state ahead of the shock (the upstream state) to be
\begin{equation}
\rho=1,
\qquad
u^r=0,
\qquad
p=p_0.
\label{eq:finite-pressure-interior-state}
\end{equation}
At $R=1^-$ (if $p_0>0$), in self-similar variables we obtain
\begin{equation*}
V(1^-,\tau)=0,
\qquad
\alpha Q(1^-,\tau)=\tfrac{\sqrt{\gamma p_0}}{\Cu(\tau)},
\qquad
K(1^-,\tau)=\log\left(\tfrac{\sqrt{\gamma p_0}}{\Cu(\tau)}\right).
\end{equation*}
As long as $ \gamma p_0<\crrb^2\Cu(\tau)^2, $ the shock satisfies the Lax inequalities~\eqref{intro:lax}. We can now compute the downstream values of the self-similar variables by using the Rankine--Hugoniot conditions~\eqref{eq:fixed-shock-RH-conservative} (equivalently, using~\eqref{intro:RH:re})
\begin{subequations}
\label{eq:finite-pressure-RH-traces}
\begin{align}
V(1^+,\tau)
&=-\tfrac{2\crrb}{\gamma+1}
+\tfrac{2\gamma p_0}
{(\gamma+1)\crrb\Cu(\tau)^2},
\label{eq:finite-pressure-RH-V}\\
\alpha Q(1^+,\tau)
&=\tfrac{\crrb}{\gamma+1}
\left[
\gamma\left(2-\tfrac{(\gamma-1)p_0}
{\crrb^2\Cu(\tau)^2}\right)
\left((\gamma-1)+\tfrac{2\gamma p_0}
{\crrb^2\Cu(\tau)^2}\right)
\right]^{\frac{1}{2}},
\label{eq:finite-pressure-RH-Q}\\
\rho(1^+,\tau)
&=\tfrac{\gamma+1}
{(\gamma-1)+\tfrac{2\gamma p_0}
{\crrb^2\Cu(\tau)^2}},
\label{eq:finite-pressure-RH-density}\\
K(1^+,\tau)
&=\log\bigl(\alpha Q(1^+,\tau)\bigr)
-\alpha\log\rho(1^+,\tau).
\label{eq:finite-pressure-RH-K}
\end{align}
\end{subequations}
Subtracting the Guderley traces~\eqref{eq:guderley:profile-traces} gives the Rankine--Hugoniot conditions for the perturbations:
\begin{subequations}
\label{eq:finite-pressure-RH-tilded-traces}
\begin{align}
\widetilde V(1^+,\tau)
&=\tfrac{2\gamma p_0}
{(\gamma+1)\crrb\Cu(\tau)^2},
\label{eq:finite-pressure-RH-tilded-V}\\
\widetilde Q(1^+,\tau)
&=\Qbar(1^+)\left[
\sqrt{
\left(1-\tfrac{\alpha p_0}{\crrb^2\Cu(\tau)^2}\right)
\left(1+\tfrac{\gamma p_0}{\alpha\crrb^2\Cu(\tau)^2}\right)
}-1
\right],
\label{eq:finite-pressure-RH-tilded-Q}\\
\widetilde K(1^+,\tau)
&=\tfrac{1}{2}\log\left(
1-\tfrac{\alpha p_0}{\crrb^2\Cu(\tau)^2}
\right)
+\tfrac{\gamma}{2}\log\left(
1+\tfrac{\gamma p_0}{\alpha\crrb^2\Cu(\tau)^2}
\right).
\label{eq:finite-pressure-RH-tilded-K}
\end{align}
\end{subequations}
In particular, we observe that\footnote{Here, the implicit constant in the inequality $\lesssim$ depends exclusively on $\gamma$, $\crrb$ and the Guderley profiles $(\bar V, \bar Q, \bar K)$.}
\begin{equation}
\label{eq:finite-pressure-RH-trace-bound}
|\widetilde V(1^+,\tau)|
+|\widetilde Q(1^+,\tau)|
+|\widetilde K(1^+,\tau)|
\lesssim\tfrac{p_0}{\Cu(\tau)^2}.
\end{equation}
Differentiating~\eqref{eq:finite-pressure-RH-tilded-traces} in $\tau$ and using $\partial_\tau\Cu=-\cuu\Cu$, then simplifying with~\eqref{eq:finite-pressure-RH-Q}, gives
\begin{subequations}
\label{eq:finite-pressure-RH-tilded-time-derivatives}
\begin{align}
\partial_\tau\widetilde V(1^+,\tau)
&=\tfrac{4\gamma p_0\cuu(\tau)}
{(\gamma+1)\crrb\Cu(\tau)^2},\\
\partial_\tau\widetilde Q(1^+,\tau)
&=\tfrac{\gamma p_0\cuu(\tau)}
{\alpha^2(1+\alpha)^2\Cu(\tau)^2Q(1^+,\tau)}
\left(
\gamma-\alpha^2
-\tfrac{2\alpha\gamma p_0}{\crrb^2\Cu(\tau)^2}
\right),\\
\partial_\tau\widetilde K(1^+,\tau)
&=\tfrac{\gamma p_0\cuu(\tau)}
{\alpha^2(1+\alpha)\Cu(\tau)^2Q(1^+,\tau)^2}
\left(
1+3\alpha
-\tfrac{2\alpha\gamma p_0}{\crrb^2\Cu(\tau)^2}
\right).
\end{align}
\end{subequations}
Using the equations~\eqref{eq:VQK-system} and the inverse formulas~\eqref{eq:ring-inverse}, we obtain
\begin{subequations}
\label{eq:finite-pressure-RH-normal-system}
\begin{align}
&\bigl(\crrb+V\bigr)\tA
=\tfrac{\alpha}{\gamma}Q
\left(-\partial_\tau\widetilde K+\tcuu-\widetilde V\right)
-\tfrac{\alpha}{\gamma}(1+\Vbar)\widetilde Q
-\widetilde V\,\overline{\rA},
\label{eq:finite-pressure-RH-normal-K}\\
&\bigl(\crrb+V\bigr)
\left(-\tfrac{1}{2}\tZ+\tfrac{1}{\alpha}\tA+\tfrac{1}{2}\tW\right)
+\tfrac{\alpha}{2}Q(\tZ+\tW)
=-\partial_\tau\widetilde Q+\tcuu Q
-(1+\alpha d)\widetilde VQ
\notag\\
&\qquad
-\bigl(1+(1+\alpha d)\Vbar\bigr)\widetilde Q
-\widetilde V
\left(
-\tfrac{1}{2}\overline{\rZ}
+\tfrac{1}{\alpha}\overline{\rA}
+\tfrac{1}{2}\overline{\rW}
\right)
-\tfrac{\alpha}{2}\widetilde Q
\left(\overline{\rZ}+\overline{\rW}\right),
\label{eq:finite-pressure-RH-normal-Q}\\
&\tfrac{1}{2}\bigl(\crrb+V-\alpha Q\bigr)\tZ
+\tfrac{1}{2}\bigl(\crrb+V+\alpha Q\bigr)\tW
=-\partial_\tau\widetilde V+\tcuu V
-(1+2\Vbar)\widetilde V-(\widetilde V)^2
\notag\\
&\qquad
-\tfrac{4\alpha^2}{\gamma}\Qbar\widetilde Q
-\tfrac{2\alpha^2}{\gamma}(\widetilde Q)^2
-\tfrac{1}{2}\bigl(\widetilde V-\alpha\widetilde Q\bigr)
\overline{\rZ}
-\tfrac{1}{2}\bigl(\widetilde V+\alpha\widetilde Q\bigr)
\overline{\rW}.
\label{eq:finite-pressure-RH-normal-V}
\end{align}
\end{subequations}
Evaluating the  equations~\eqref{eq:finite-pressure-RH-normal-system} at the shock location $R=1$, using the Rankine--Hugoniot jump conditions~\eqref{eq:finite-pressure-RH-tilded-traces}, and the associated time derivatives~\eqref{eq:finite-pressure-RH-tilded-time-derivatives}, leads to
\begin{equation}
\mathsf R_{\mathrm{RH}}
\begin{pmatrix}
\tZ\\[2pt]
\tA\\[2pt]
\tW
\end{pmatrix}(1^+,\tau)
=
\begin{pmatrix}
\tfrac{\gamma+1}{\crrb(\gamma-1)}\\[4pt]
\tfrac{\crrb\sqrt{2\gamma(\gamma-1)}}{\alpha(\gamma+1)}\\[4pt]
-\tfrac{2\crrb}{\gamma+1}
\end{pmatrix}
\tcuu(\tau)
+\mathcal P_{p_0}\bigl(\tau,\tcuu(\tau),\Ytil(1^+,\tau)\bigr),
\label{eq:finite-pressure-RH-ring-system}
\end{equation}
where 
\begin{equation*}
\mathsf R_{\mathrm{RH}}
:=
\begin{pmatrix}
0
&\tfrac{\gamma(\gamma+1)}
{\crrb\sqrt{2\gamma(\gamma-1)}}
&0
\\[4pt]
\tfrac{\crrb}{2(\gamma+1)}
\bigl(\sqrt{2\gamma(\gamma-1)}-\gamma+1\bigr)
&\tfrac{2\crrb}{\gamma+1}
&\tfrac{\crrb}{2(\gamma+1)}
\bigl(\sqrt{2\gamma(\gamma-1)}+\gamma-1\bigr)
\\[4pt]
\tfrac{\crrb}{2(\gamma+1)}
\bigl(\gamma-1-\sqrt{2\gamma(\gamma-1)}\bigr)
&0
&\tfrac{\crrb}{2(\gamma+1)}
\bigl(\gamma-1+\sqrt{2\gamma(\gamma-1)}\bigr)
\end{pmatrix},
\end{equation*}
while $\mathcal{P}_{p_0}$ is given by
\begin{subequations}
\label{eq:finite-pressure-RH-remainder-components}
\begin{align}
\bigl(\mathcal P_{p_0}\bigr)_1
&=\tfrac{1}{(\crrb+\Vbar)\Qbar}
\Bigl\{
\widetilde Q\tcuu(\tau)-Q\widetilde V
-(1+\Vbar)\widetilde Q
-\tfrac{\gamma}{\alpha}\widetilde V
\bigl(\overline{\rA}+\tA\bigr)
\notag\\
&\qquad\qquad
-\tfrac{\gamma p_0\cuu(\tau)}
{\alpha^2(1+\alpha)\Cu(\tau)^2Q}
\left(1+3\alpha
-\tfrac{2\alpha\gamma p_0}{\crrb^2\Cu(\tau)^2}\right)
\Bigr\},
\label{eq:finite-pressure-RH-remainder-A}\\
\bigl(\mathcal P_{p_0}\bigr)_2
&=-\tfrac{\gamma p_0\cuu(\tau)}
{\alpha^2(1+\alpha)^2\Cu(\tau)^2Q}
\left(\gamma-\alpha^2
-\tfrac{2\alpha\gamma p_0}{\crrb^2\Cu(\tau)^2}\right)
+\tcuu(\tau)\widetilde Q
-(1+\alpha d)\widetilde VQ
\notag\\
&\quad
-\bigl(1+(1+\alpha d)\Vbar\bigr)\widetilde Q
-\widetilde V
\left[
-\tfrac{1}{2}\bigl(\overline{\rZ}+\tZ\bigr)
+\tfrac{1}{\alpha}\bigl(\overline{\rA}+\tA\bigr)
+\tfrac{1}{2}\bigl(\overline{\rW}+\tW\bigr)
\right]
\notag\\
&\quad
-\tfrac{\alpha}{2}\widetilde Q
\bigl(\overline{\rZ}+\overline{\rW}+\tZ+\tW\bigr),
\label{eq:finite-pressure-RH-remainder-Q}\\
\bigl(\mathcal P_{p_0}\bigr)_3
&=-\tfrac{4\gamma p_0\cuu(\tau)}
{(\gamma+1)\crrb\Cu(\tau)^2}
+\tcuu(\tau)\widetilde V
-(1+2\Vbar)\widetilde V-(\widetilde V)^2
-\tfrac{4\alpha^2}{\gamma}\Qbar\widetilde Q
-\tfrac{2\alpha^2}{\gamma}(\widetilde Q)^2
\notag\\
&\quad
-\tfrac{1}{2}\bigl(\widetilde V-\alpha\widetilde Q\bigr)
\bigl(\overline{\rZ}+\tZ\bigr)
-\tfrac{1}{2}\bigl(\widetilde V+\alpha\widetilde Q\bigr)
\bigl(\overline{\rW}+\tW\bigr).
\label{eq:finite-pressure-RH-remainder-V}
\end{align}
\end{subequations}
In particular, combining these formulas with the bound~\eqref{eq:finite-pressure-RH-trace-bound}, we obtain
\begin{equation}
\mathcal P_0=0,
\qquad
\left|\mathcal P_{p_0}(\tau,\tcuu(\tau),\Ytil)\right|
\lesssim
\tfrac{p_0}{\Cu(\tau)^2}
\bigl(1+|\tcuu(\tau)|+|\Ytil|\bigr).
\label{eq:finite-pressure-RH-pressure-remainder}
\end{equation}
Solving \eqref{eq:finite-pressure-RH-ring-system} for $(\tcuu,\tW,\tA)$ as functions of $\tZ$ (since $\mathsf R_{\mathrm{RH}}$ is invertible and $0<\kappa_\gamma<\infty$), using the bound~\eqref{eq:finite-pressure-RH-pressure-remainder}, and using $\frac{p_0}{\Cu(\tau)^2}\ll1$ we obtain\footnote{In the cold case limit $p_0=0$, it is immediate to obtain \eqref{eq:finite-pressure-RH-solved-feedback}. For $p_0>0$, since in \eqref{eq:finite-pressure-RH-ring-system} the term $\mathcal{P}$ depends on the unknown $\Ytil$, to obtain the conclusion we need to apply the inverse function theorem, which can be applied easily under smallness assumptions for $\frac{p_0}{C_u^2}$.  }
\begin{subequations}
\label{eq:finite-pressure-RH-solved-feedback}
\begin{align}
\tcuu(\tau)
&=\kappa_\gamma\tZ(1^+,\tau)
+\Ocal\left(
\tfrac{p_0}{\Cu(\tau)^2}
\bigl(1+|\tZ(1^+,\tau)|\bigr)
\right),
\label{eq:finite-pressure-RH-solved-feedback-cu}\\
\tW(1^+,\tau)
&=b_W(\gamma)\tZ(1^+,\tau)
+\Ocal\left(
\tfrac{p_0}{\Cu(\tau)^2}
\bigl(1+|\tZ(1^+,\tau)|\bigr)
\right),
\label{eq:finite-pressure-RH-solved-feedback-W}\\
\tA(1^+,\tau)
&=b_A(\gamma)\tZ(1^+,\tau)
+\Ocal\left(
\tfrac{p_0}{\Cu(\tau)^2}
\bigl(1+|\tZ(1^+,\tau)|\bigr)
\right).
\label{eq:finite-pressure-RH-solved-feedback-A}
\end{align}
\end{subequations}
Here the coefficients in~\eqref{eq:profile-shock-feedback} are given by
\begin{subequations}
\label{eq:finite-pressure-RH-feedback-coefficients}
\begin{align}
\kappa_\gamma
&:=\tfrac{\gamma
\bigl(\sqrt{2\gamma(\gamma-1)}-\gamma+1\bigr)}
{2\bigl(\gamma+\sqrt{2\gamma(\gamma-1)}\bigr)},
\label{eq:finite-pressure-RH-feedback-coefficient-cu}\\
b_W(\gamma)
&:=\tfrac{
\bigl(\sqrt{2\gamma(\gamma-1)}-\gamma+1\bigr)
\bigl(\sqrt{2\gamma(\gamma-1)}-\gamma\bigr)}
{\bigl(\sqrt{2\gamma(\gamma-1)}+\gamma-1\bigr)
\bigl(\sqrt{2\gamma(\gamma-1)}+\gamma\bigr)},
\label{eq:finite-pressure-RH-feedback-coefficient-W}\\
b_A(\gamma)
&:=\tfrac{2\gamma-\sqrt{2\gamma(\gamma-1)}}
{2\bigl(\gamma+\sqrt{2\gamma(\gamma-1)}\bigr)}.
\label{eq:finite-pressure-RH-feedback-coefficient-A}
\end{align}
\end{subequations}
We also observe that the error term $\mathcal{P}_{p_0}$ vanishes in the cold gas limit, when $p_0=0$.

\subsection{Initial data in self-similar coordinates}

The initial data are prescribed at the self-similar initial time $\tau=0$ (which corresponds to the physical time $t=-1$), with a shock located at the self-similar coordinate $R=1$. In physical coordinates this corresponds to the shock being fixed at $r=\Cr(0)=1$.

Ahead of the shock we take the quiescent constant state
\begin{equation*}
\rho=1,
\qquad
u^r=0,
\qquad
p=p_0,
\qquad 0\leq R<1.
\end{equation*}
Equivalently, in the modulated variables,
\begin{equation*}
\Sigma(R,0)=\Sigma_0,
\qquad
U(R,0)=0,
\qquad
B(R,0)=\alpha\Sigma_0,
\qquad 0\leq R<1,
\end{equation*}
where
\begin{equation}
\Sigma_0:=\tfrac{\sqrt{\gamma p_0}}{\alpha\Cu(0)}.
\label{eq:initial-interior-sound-speed}
\end{equation}
Behind the shock, we take an appropriate perturbation of the Guderley profile. Therefore, the complete piecewise initial data are
\begin{equation}
\begin{pmatrix}
\Sigma\\
U\\
B
\end{pmatrix}(R,0)
=
\begin{cases}
\begin{pmatrix}
\Sigma_0\\[3pt]
0\\[3pt]
\alpha\Sigma_0
\end{pmatrix},
&0\leq R<1,\\[2em]
\begin{pmatrix}
\bar\Sigma\\
\bar U\\
\bar B
\end{pmatrix}(R)
+
\begin{pmatrix}
\widetilde\Sigma_{\rm in}\\
\widetilde U_{\rm in}\\
\widetilde B_{\rm in}
\end{pmatrix}(R),
&R>1.
\end{cases}
\label{eq:initial-data-self-similar}
\end{equation}
The exterior perturbation is assumed to be $C^1$ and sufficiently small in the appropriate weighted $C^1$ norm, as specified in Theorem~\ref{thm:main-self-similar}. In particular, we have $\Sigma(R,0)>0$ and $B(R,0)>0$ for $R>1$.

We observe that the traces at $R=1$ are constrained by the nonlinear Rankine--Hugoniot conditions. By the modulation choice~\eqref{eq:shock-freezing-speed}, the initial shock speed is
\begin{equation}
\ds(-1)=-\crrb\Cu(0).
\label{eq:initial-shock-speed}
\end{equation}
Specializing~\eqref{eq:finite-pressure-RH-traces} to $\tau=0$ and using~\eqref{eq:initial-interior-sound-speed} gives
\begin{subequations}
\label{eq:initial-finite-pressure-shock-traces}
\begin{align}
V(1^+,0)
&=-\tfrac{2\crrb}{\gamma+1}
+\tfrac{2\alpha^2\Sigma_0^2}{(\gamma+1)\crrb},
\label{eq:initial-finite-pressure-V-trace}\\
\alpha Q(1^+,0)
&=\tfrac{\crrb}{\gamma+1}
\left[
\gamma\left(2-\tfrac{(\gamma-1)\alpha^2\Sigma_0^2}
{\gamma\crrb^2}\right)
\left((\gamma-1)+\tfrac{2\alpha^2\Sigma_0^2}
{\crrb^2}\right)
\right]^{\frac{1}{2}},
\label{eq:initial-finite-pressure-Q-trace}\\
\rho(1^+,0)
&=\tfrac{\gamma+1}
{(\gamma-1)+\tfrac{2\alpha^2\Sigma_0^2}{\crrb^2}},
\label{eq:initial-finite-pressure-density-trace}\\
K(1^+,0)
&=\log\bigl(\alpha Q(1^+,0)\bigr)
-\alpha\log\rho(1^+,0).
\label{eq:initial-finite-pressure-K-trace}
\end{align}
\end{subequations}
These conditions determine the  trace of the exterior perturbation in~\eqref{eq:initial-data-self-similar}. When $p_0=0$, they reduce to the cold-gas Guderley traces~\eqref{eq:guderley:profile-traces}. These are the zeroth-order compatibility conditions at the shock. Since we carry out our analysis in $C^1$, we also require first-order compatibility conditions. That is, the traces $(\tZ,\tA,\tW)(1^+,0)$, together with $\tcuu(0)$, need to satisfy \eqref{eq:finite-pressure-RH-ring-system} at $\tau=0$. Explicitly, this means
\begin{subequations}
\label{eq:initial-differentiated-compatibility}
\begin{align}
\tcuu(0)
&=\kappa_\gamma\tZ(1^+,0)
+\tfrac{\crrb\sqrt{2\gamma(\gamma-1)}}
{(\gamma+1)\bigl(\gamma+\sqrt{2\gamma(\gamma-1)}\bigr)}
\bigl(\mathcal P_{p_0}\bigr)_1
\notag\\
&\quad
+\tfrac{\gamma(\gamma+1)}
{2\crrb\bigl(\gamma+\sqrt{2\gamma(\gamma-1)}\bigr)}
\left(\bigl(\mathcal P_{p_0}\bigr)_3
-\bigl(\mathcal P_{p_0}\bigr)_2\right),
\label{eq:initial-differentiated-compatibility-cu}\\
\tW(1^+,0)
&=b_W(\gamma)\tZ(1^+,0)
+\tfrac{2(\gamma+1)}
{\crrb\bigl(\sqrt{2\gamma(\gamma-1)}+\gamma-1\bigr)
\bigl(\gamma+\sqrt{2\gamma(\gamma-1)}\bigr)}
\notag\\
&\qquad\cdot\left(
\gamma\bigl(\mathcal P_{p_0}\bigr)_2
+\sqrt{2\gamma(\gamma-1)}\bigl(\mathcal P_{p_0}\bigr)_3
-\tfrac{2\crrb^2\sqrt{2\gamma(\gamma-1)}}{(\gamma+1)^2}
\bigl(\mathcal P_{p_0}\bigr)_1
\right),
\label{eq:initial-differentiated-compatibility-W}\\
\tA(1^+,0)
&=b_A(\gamma)\tZ(1^+,0)
+\tfrac{\crrb\bigl(2\gamma+\sqrt{2\gamma(\gamma-1)}\bigr)}
{(\gamma+1)\bigl(\gamma+\sqrt{2\gamma(\gamma-1)}\bigr)}
\bigl(\mathcal P_{p_0}\bigr)_1
\notag\\
&\quad
+\tfrac{\gamma(\gamma+1)}
{\crrb\sqrt{2\gamma(\gamma-1)}
\bigl(\gamma+\sqrt{2\gamma(\gamma-1)}\bigr)}
\left(\bigl(\mathcal P_{p_0}\bigr)_3
-\bigl(\mathcal P_{p_0}\bigr)_2\right).
\label{eq:initial-differentiated-compatibility-A}
\end{align}
\end{subequations}

\begin{remark}
The initial shock speed is the unique free parameter in the choice of the (exterior) undifferentiated traces of the initial data at the shock. After fixing the interior state $(\rho,u^r,p)=(1,0,p_0)$ and the initial shock speed $\ds(-1)$, the inverted Rankine--Hugoniot conditions~\eqref{intro:RH:re} then determine the three exterior physical traces. For $p_0>0$, these relations also show that we may choose as a free parameter any of the exterior traces $(\usp,\rsp,\ssp)$ at $t=-1$. The choice of shock speed also fixes $\Cu(0)$ through~\eqref{eq:initial-shock-speed}. Once this normalization is chosen, the self-similar traces are determined by~\eqref{eq:initial-finite-pressure-shock-traces}. Thus, the one-dimensional freedom to choose the undifferentiated traces of the initial data is incorporated into $\Cu(0)$.
\end{remark}

\section{Properties of the Guderley profile used in the stability analysis}
\label{sec:guderley-profile-properties}

In this section we record the three profile inequalities used in the stability analysis. We start by analyzing the linearization of~\eqref{eq:ring-perturbation-system}.

Under the modulation choice~\eqref{eq:cr:0}, the linearization around the Guderley profile can be written as 
\begin{equation}
\partial_\tau\Ytil+\overline{\Tmat}R\partial_R\Ytil
=\mathcal L_{\mathrm{bg}}\Ytil
+\tcuu\Ybar
+\widetilde V\mathcal F_V
+\alpha\widetilde Q\mathcal F_Q.
\label{eq:profile-linearized-system}
\end{equation}
Here $\overline{\Tmat}$ is the background value of the transport matrix \eqref{eq:ring-transport-matrix}, and the local linear operator is
\begin{equation}
\mathcal L_{\mathrm{bg}}Y'
:=-\overline{\Dmat}Y'
-\Ncal(\Ybar,Y')
-\Ncal(Y',\Ybar),
\label{eq:profile-background-linearization}
\end{equation}
with
\begin{equation*}
\overline{\Dmat}:=I+\Vbar D^{(v)}+\alpha\Qbar D^{(q)}.
\end{equation*}
In particular, we write its entries as
\begin{equation*}
\mathcal L_{\mathrm{bg}}
=
\begin{pmatrix}
\ell_{ZZ}&\ell_{ZA}&\ell_{ZW}\\
\ell_{AZ}&\ell_{AA}&\ell_{AW}\\
\ell_{WZ}&\ell_{WA}&\ell_{WW}
\end{pmatrix}.
\end{equation*}
For $d=3$, direct substitution of~\eqref{eq:ring-Dv}, \eqref{eq:ring-Dq}, and~\eqref{eq:ring-quadratic-form} into \eqref{eq:profile-background-linearization} gives
\begin{subequations}
\label{eq:profile-background-coefficients}
\begin{align}
\ell_{ZZ}
&=-1+\tfrac{1}{2}\overline{\rA}
+\tfrac{5\alpha}{2}\Qbar
-\tfrac{3(1+\alpha)}{2}\Vbar
-(1+\alpha)\overline{\rZ}
+\tfrac{\alpha-1}{2}\overline{\rW},
\label{eq:profile-background-coefficient-ZZ}\\
\ell_{ZA}=\ell_{WA}
&=\tfrac{1}{2}\bigl(\overline{\rZ}-\overline{\rW}\bigr)
-\tfrac{4\alpha}{\gamma}\Qbar,
\label{eq:profile-background-coefficient-ZA-WA}\\
\ell_{ZW}
&=-\tfrac{1}{2}\overline{\rA}
+\tfrac{\alpha(2\alpha+5)}{2\gamma}\Qbar
+\tfrac{3\alpha-1}{2}\Vbar
+\tfrac{\alpha-1}{2}\overline{\rZ},
\label{eq:profile-background-coefficient-ZW}\\
\ell_{AZ}=\ell_{AW}
&=-\tfrac{1+\alpha}{2}\overline{\rA}
-\tfrac{\alpha}{2\gamma}\Qbar,
\label{eq:profile-background-coefficient-AZ-AW}\\
\ell_{AA}
&=-1-(1+3\alpha)\Vbar
-\tfrac{1+\alpha}{2}
\bigl(\overline{\rZ}+\overline{\rW}\bigr),
\label{eq:profile-background-coefficient-AA}\\
\ell_{WZ}
&=\tfrac{1}{2}\overline{\rA}
-\tfrac{\alpha(2\alpha+5)}{2\gamma}\Qbar
+\tfrac{3\alpha-1}{2}\Vbar
+\tfrac{\alpha-1}{2}\overline{\rW},
\label{eq:profile-background-coefficient-WZ}\\
\ell_{WW}
&=-1-\tfrac{1}{2}\overline{\rA}
-\tfrac{5\alpha}{2}\Qbar
-\tfrac{3(1+\alpha)}{2}\Vbar
+\tfrac{\alpha-1}{2}\overline{\rZ}
-(1+\alpha)\overline{\rW}.
\label{eq:profile-background-coefficient-WW}
\end{align}
\end{subequations}
The terms $\mathcal{F}_V$ and $\mathcal{F}_Q$ are given by
\begin{equation}\label{eq:profile-forcing-vectors}\mathcal F_V:=-\left(R\partial_R\Ybar+D^{(v)}\Ybar\right),\qquad\mathcal F_Q:=-\left(\diag(-1,0,1)R\partial_R\Ybar+D^{(q)}\Ybar\right).\end{equation}

At the linear level, if we neglect the terms depending on $p_0$, the system \eqref{eq:profile-linearized-system} is coupled at $R=1$ to the boundary conditions obtained from~\eqref{eq:finite-pressure-RH-ring-system}:
\begin{subequations}
\label{eq:profile-shock-feedback}
\begin{align}
\tA(1^+,\tau)
&=b_A(\gamma)\tZ(1^+,\tau),
\label{eq:profile-shock-feedback-A}\\
\tW(1^+,\tau)
&=b_W(\gamma)\tZ(1^+,\tau),
\label{eq:profile-shock-feedback-W}\\
\tcuu(\tau)
&=\kappa_\gamma\tZ(1^+,\tau).
\label{eq:profile-shock-feedback-cu}
\end{align}
\end{subequations}
The coefficients $b_A(\gamma)$, $b_W(\gamma)$, and $\kappa_\gamma$ are given by~\eqref{eq:finite-pressure-RH-feedback-coefficients}. In the same approximation, $\widetilde V(1^+,\tau)=\widetilde Q(1^+,\tau)=0$. In particular $\tilde{V}$ and $\tilde{Q}$ are uniquely recovered from $\Ytil$ by integrating the first two identities in~\eqref{eq:ring-inverse} from $R=1$.

For $i\in\{Z,A,W\}$, let
\begin{equation}
\mathcal J_i
:=\tfrac{1}{2}\left|(\mathcal F_V)_i
-\alpha(\mathcal F_Q)_i\right|
+(1-\alpha)\left|(\mathcal F_Q)_i\right|
+\tfrac{11}{20}\left|(\mathcal F_V)_i
+\alpha(\mathcal F_Q)_i\right|.
\label{eq:profile-joint-Volterra-coefficient}
\end{equation}

\begin{lemma}\label{lem:profile-signs} 
Fix $d=3$ and $1<\gamma\leq\frac53$. Consider the corresponding Guderley profile $(\bar V, \bar Q, \bar K)$ and the associated coefficients defined in~\eqref{eq:profile-background-coefficients}. Then, for $1 \le R \le R_s$ we have
\begin{equation*}
 \ell_{ZA}=\ell_{WA}<0,\quad \ell_{ZW}>0,\quad \ell_{AZ}=\ell_{AW}>0,\quad \ell_{WZ}<0,\quad \overline{\rZ}>0,\quad \overline{\rA}<0,\quad \overline{\rW}>0.\end{equation*}
\end{lemma}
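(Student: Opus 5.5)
The plan is to reduce every sign claim to a statement about the phase-portrait trajectory $(\Vbar,\Qbar)$ on the arc from the Rankine--Hugoniot point to $P_6$. I will first express $\overline{\rZ},\overline{\rA},\overline{\rW}$ algebraically in $(\Vbar,\Qbar)$. By \eqref{eq:guderley:B-elimination} and $\Kbar=\log(\Bbar/R)$,
\begin{equation*}
R\partial_R\Kbar=\tfrac{\crrb-1}{\crrb+\Vbar}-1=-\tfrac{1+\Vbar}{\crrb+\Vbar},
\end{equation*}
so $\overline{\rA}=-\tfrac{\alpha}{\gamma}\Qbar\tfrac{1+\Vbar}{\crrb+\Vbar}$. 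On $[1,\Rs]$ Theorem~\ref{lem:guderley-profile-monotonicity-decay} gives $\Qbar>0$ and $\Vbar\ge-\tfrac{2\crrb}{\gamma+1}$, hence $\crrb+\Vbar\ge\tfrac{\gamma-1}{\gamma+1}\crrb>0$ and $1+\Vbar>0$. Therefore $\overline{\rA}<0$ immediately.

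For $\overline{\rZ}$ and $\overline{\rW}$ I will use $R\partial_R\Vbar=\Delta_{\Vbar}/\Delta$ and $R\partial_R\Qbar=\Delta_{\Qbar}/\Delta$ from \eqref{eq:guderley:VQ-autonomous}. This gives
\begin{equation*}
\overline{\rZ}=\tfrac{\Delta_{\Vbar}-\Delta_{\Qbar}}{\Delta}-\tfrac{\overline{\rA}}{\alpha},
\qquad
\overline{\rW}=\tfrac{\Delta_{\Vbar}+\Delta_{\Qbar}}{\Delta}+\tfrac{\overline{\rA}}{\alpha}.
\end{equation*}
Since $\partial_R\Vbar>0$ and $\partial_R\Qbar<0$, $\overline{\rW}$ is not automatically positive, but $\overline{\rZ}=R\partial_R\Vbar-R\partial_R\Qbar-\overline{\rA}/\alpha$ is a sum of three positive terms. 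So $\overline{\rZ}>0$ is free. For $\overline{\rW}$ I will clear the denominator. On $(1,\Rs)$ we have $\Delta<0$, because $\LZ<0$ there while the other two factors are positive. The claim $\overline{\rW}>0$ then becomes the sign of an explicit polynomial-rational expression $\mathcal E_W(\Vbar,\Qbar)$. At $R=\Rs$ I will use the sonic values from Section~\ref{sec:guderley}: $\Rs\partial_R\Vbar(\Rs)$ and $\Rs\partial_R\Qbar(\Rs)=\eta_1\Rs\partial_R\Vbar(\Rs)$. This makes $\overline{\rW}(\Rs)$ proportional to $(1+\eta_1)$ times a known quantity.

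The off-diagonal coefficients in \eqref{eq:profile-background-coefficients} are linear combinations of $\Vbar,\Qbar,\overline{\rZ},\overline{\rA},\overline{\rW}$, so each becomes a rational function of $(\Vbar,\Qbar)$ in the same way. Two of them follow by hand. For $\ell_{AZ}$ one needs $\tfrac{1+\alpha}{2}|\overline{\rA}|>\tfrac{\alpha}{2\gamma}\Qbar$, which is equivalent to $\gamma(1+\alpha)\tfrac{1+\Vbar}{\crrb+\Vbar}>\gamma$. This holds because $1+\Vbar>\crrb+\Vbar$ (as $\crrb<1$) and $1+\alpha>1$. For $\ell_{ZA}$ one needs $\overline{\rZ}-\overline{\rW}=-2R\partial_R\Qbar-\tfrac{2}{\alpha}\overline{\rA}$, which gives
\begin{equation*}
\ell_{ZA}=-R\partial_R\Qbar-\tfrac{1}{\alpha}\overline{\rA}-\tfrac{4\alpha}{\gamma}\Qbar.
\end{equation*}
The first two terms are positive, so this is not sign-definite by inspection and needs a quantitative bound. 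The same holds for $\ell_{ZW}>0$ and $\ell_{WZ}<0$.

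For these remaining claims I will follow Appendix~\ref{app:three-profile-interval-proof}. Take the explicit upper and lower barriers enclosing the graph of $\mathcal Q(v)$ for $v\in[-\tfrac{2\crrb}{\gamma+1},\Vbar_6]$, together with a rigorous enclosure of $\crrb$ as a function of $\gamma$. Then verify by interval arithmetic that each cleared numerator keeps a fixed sign on the barrier region, uniformly in $\gamma\in(1,\tfrac53]$ after subdividing the $\gamma$-range.

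The main obstacle is the endpoint $R=\Rs$. There $\Delta$ vanishes and $\Delta_{\Vbar},\Delta_{\Qbar}$ vanish too, so the cleared numerators degenerate and a plain interval enclosure cannot certify a strict sign near $P_6$. I will handle a small neighborhood of $P_6$ separately with the convergent Taylor expansion of $\mathcal Q$ (coefficients $\eta_1,\eta_2$ from \eqref{eq:guderley:sonic-coefficient-recurrence}). Using that expansion, $R\partial_R\Vbar$ and $R\partial_R\Qbar$ extend analytically, so each coefficient is an explicit function of $v$ whose value at $\Vbar_6$ has a checkable strict sign and whose derivative is bounded. The barrier-based interval check then only needs to cover the arc away from $P_6$. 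A second delicate point is the limit $\gamma\downarrow1$, where $\alpha\to0$ and several terms degenerate; there I would factor out the appropriate powers of $\alpha$ before the interval check.
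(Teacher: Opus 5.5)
\textbf{Gap: a sign error makes your claim that $\overline{\rZ}>0$ comes for free false.}

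Your overall architecture is the paper's. You prove $\overline{\rA}<0$ and $\ell_{AZ}=\ell_{AW}>0$ by hand; your reduction of $\ell_{AZ}$ to $(1+\alpha)(1+\Vbar)>\crrb+\Vbar$ is fine. You then certify the remaining signs by interval arithmetic on a barrier enclosure of the profile plus an enclosure of $\crrb$.

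The error is in the entropy term. Since $\overline{\rA}=\frac{\alpha}{\gamma}\Qbar R\partial_R\Kbar$, the term $\frac1\gamma\Qbar R\partial_R\Kbar$ in \eqref{eq:ring-def} equals $+\overline{\rA}/\alpha$. Consistently with \eqref{eq:ring-inverse}, this gives
\[
\overline{\rZ}=R\partial_R\Vbar-R\partial_R\Qbar+\tfrac{1}{\alpha}\overline{\rA},\qquad
\overline{\rW}=R\partial_R\Vbar+R\partial_R\Qbar-\tfrac{1}{\alpha}\overline{\rA},\qquad
\ell_{ZA}=-R\partial_R\Qbar+\tfrac{1}{\alpha}\overline{\rA}-\tfrac{4\alpha}{\gamma}\Qbar .
\]
Because $\overline{\rA}<0$, $\overline{\rZ}$ is a difference of positive quantities, not a sum. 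The cancellation is severe. At the shock,
\[
-\tfrac{1}{\alpha}\overline{\rA}=\tfrac{\Qbar(1+\Vbar)}{\gamma(\crrb+\Vbar)}=\tfrac{1+\alpha-\crrb}{\sqrt{\gamma}\,\alpha^{3/2}(1+\alpha)}\geq\tfrac{1}{\sqrt{\gamma\alpha}(1+\alpha)},
\]
which is unbounded as $\gamma\downarrow1$. Meanwhile $\overline{\rZ}$ stays of order one (the paper's enclosure gives $\overline{\rZ}\le 2.3$).

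Subtracting the stationary $Q$-equation from the $V$-equation and using $R\partial_R\Kbar=-\frac{1+\Vbar}{\crrb+\Vbar}$ gives
\[
\LZ\,\overline{\rZ}=-\Vbar(1+\Vbar)-\tfrac{2\alpha^2}{\gamma}\Qbar^2+\Qbar\Bigl(1+(1+3\alpha)\Vbar-\tfrac{1+\Vbar}{\gamma}\Bigr).
\]
The right-hand side vanishes at $P_6$. So $\overline{\rZ}$ has exactly the $0/0$ sonic degeneracy you flagged for the other coefficients, and its sign must be certified along the profile in the same way. This is why Proposition~\ref{prop:ca-background-signs} verifies five signs, $\overline{\rZ},\overline{\rW},-\ell_{ZA},\ell_{ZW},-\ell_{WZ}$, not four.

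\textbf{The same error breaks your $\overline{\rW}$ check.} Your $\overline{\rW}$, and hence your $\mathcal E_W$, is a genuinely different and wrong function. At $R=\Rs$ use $\crrb+\Vbar_6=\alpha\Qbar_6$ and $(R\partial_R\Vbar+R\partial_R\Qbar)(\Rs)=-\frac{1+(1+3\alpha)\Vbar_6}{\alpha}$. The correct value is
\[
\overline{\rW}(\Rs)=-\tfrac{2+(2+3\gamma)\Vbar_6}{\gamma}.
\]
This is not just ``$(1+\eta_1)$ times a known quantity'': the $\overline{\rA}$ term contributes, and positivity still requires the $\crrb$ enclosure through $\Vbar_6$. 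Your formula instead gives $\frac{1}{\alpha}\bigl(-1-(1+3\alpha)\Vbar_6-\frac{1+\Vbar_6}{\gamma}\bigr)$. For $\gamma=\frac53$, where $\Vbar_6\approx-0.39$, this is negative, so an interval check built on it would fail.

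Once you correct these identities and move $\overline{\rZ}$ into the computer-verified list, your plan coincides with the paper's proof. The paper handles the sonic endpoint by deflating common factors of $x$ in ratios such as $N_x/D$; your local Taylor treatment near $P_6$ is an acceptable substitute.
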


\begin{proof} We prove $\overline{\rA}<0$ and $\ell_{AZ}=\ell_{AW}>0$ by hand. By~\eqref{eq:guderley:exponents} and Theorem~\ref{lem:guderley-profile-monotonicity-decay}, we have $0<\crrb<1$, $\Vbar\geq-\frac{\crrb}{1+\alpha}$, and $\Qbar>0$. In particular, $\crrb+\Vbar\geq\frac{\alpha\crrb}{1+\alpha}>0$ and $1+\Vbar>0$. Equations~\eqref{eq:guderley:B-elimination}, \eqref{eq:guderley-VQK-background}, and~\eqref{eq:ring-def} imply
\begin{equation}\label{eq:profile-sign-A} R\partial_R\Kbar=-\tfrac{1+\Vbar}{\crrb+\Vbar},\qquad \overline{\rA}=-\tfrac{\alpha\Qbar(1+\Vbar)}{\gamma(\crrb+\Vbar)}<0.\end{equation}
Substituting~\eqref{eq:profile-sign-A} into~\eqref{eq:profile-background-coefficient-AZ-AW}, we obtain
\begin{equation*}
 \ell_{AZ}=\ell_{AW}=\tfrac{\alpha\Qbar}{2\gamma(\crrb+\Vbar)}\left((1+\alpha)(1+\Vbar)-(\crrb+\Vbar)\right)>0,\end{equation*}
because
\begin{equation*} (1+\alpha)(1+\Vbar)-(\crrb+\Vbar)\geq 1+\alpha-\tfrac{(1+2\alpha)\crrb}{1+\alpha}=\tfrac{\alpha^2+(1+2\alpha)(1-\crrb)}{1+\alpha}>0.\end{equation*}
Proposition~\ref{prop:ca-background-signs} in Appendix~\ref{app:three-profile-interval-proof} proves the other five signs using a computer-assisted argument. We refer the reader to Appendix~\ref{app:three-profile-computation} for details of the computer-assisted proof.
\end{proof}

Define now the three following quantities:
\begin{subequations}
\label{eq:profile-row-quantities}
\begin{align}
\mathcal R_A
&:=\tfrac{21}{10}\ell_{AZ}-\kappa_\gamma\overline{\rA}
+\log R\,\mathcal J_A,
\label{eq:profile-row-A}\\
\mathcal R_W
&:=-\ell_{WZ}-(1-\alpha)\ell_{WA}
+\kappa_\gamma\overline{\rW}
+\log R\,\mathcal J_W,
\label{eq:profile-row-W}\\
\Psi_Z
&:=\ell_{ZZ}-(1-\alpha)\ell_{ZA}+\tfrac{11}{10}\ell_{ZW}
+\kappa_\gamma\overline{\rZ}
+\log R\,\mathcal J_Z\notag\\ &=-1-2\Vbar
+\tfrac{(\gamma+8)(\gamma-1)}{2\gamma}\Qbar
+(\kappa_\gamma-2)\overline{\rZ}
+\tfrac{1}{10}\ell_{ZW}
+\log R\,\mathcal J_Z.
\label{eq:profile-row-Z}
\end{align}
\end{subequations}

For $1 \le R \le R_s$ we define the two functions $G_A(R)$ and $G_W(R)$ as solutions to the ODEs
\begin{subequations}
\label{eq:profile-gain-equations}
\begin{align}
\LA R\partial_R G_A
&=\ell_{AA}G_A+\mathcal R_A,
&
G_A(1)&=b_A(\gamma),
\label{eq:profile-gain-A}\\
\LW R\partial_R G_W
&=\ell_{WW}G_W+\mathcal R_W,
&
G_W(1)&=-b_W(\gamma) = | b_W(\gamma)|.
\label{eq:profile-gain-W}
\end{align}
\end{subequations}
Explicitly, these are given by
\begin{subequations}\label{eq:profile-gain-formulas}\begin{align} G_A(R)&=b_A(\gamma)e^{\int_1^R\frac{\ell_{AA}(\Xi)}{\Xi\LA(\Xi)}\,\dd\Xi}+\int_1^R\tfrac{\mathcal R_A(R')}{R'\LA(R')}e^{\int_{R'}^R\frac{\ell_{AA}(\Xi)}{\Xi\LA(\Xi)}\,\dd\Xi}\,\dd R',
\notag\\ G_W(R)&=-b_W(\gamma)e^{\int_1^R\frac{\ell_{WW}(\Xi)}{\Xi\LW(\Xi)}\,\dd\Xi}+\int_1^R\tfrac{\mathcal R_W(R')}{R'\LW(R')}e^{\int_{R'}^R\frac{\ell_{WW}(\Xi)}{\Xi\LW(\Xi)}\,\dd\Xi}\,\dd R'.\label{eq:profile-gain-W-formula}\end{align}\end{subequations}

The function $\Psi_Z$ arises in the differential inequality for the $\tZ$-component, while $G_A$ and $G_W$ arise by integrating the corresponding inequalities for $\tA$ and $\tW$ along the associated characteristics, starting from the shock. These quantities use constant weights for $\tA$ and $\tW$ by $\frac{1}{1-\alpha}$ and $\frac{10}{11}$, respectively, relative to $\tZ$, as we will use in the bootstrap bounds~\eqref{eq:nonlinear-bootstrap-exterior-assumption}.
\begin{lemma}\label{lem:three-profile-inequalities}
 Fix $d=3$ and $1<\gamma\leq\frac53$. Consider the corresponding Guderley profile $(\bar V,\bar Q,\bar K)$ and the functions $G_A$, $G_W$, and $\Psi_Z$ defined in~\eqref{eq:profile-gain-equations} and~\eqref{eq:profile-row-quantities}. Then
\begin{subequations}
\label{eq:three-profile-inequalities}
\begin{align}
\sup_{1\leq R\leq\Rs}G_A(R)&<1-\alpha,
\label{eq:profile-inequality-A}\\
\sup_{1\leq R\leq\Rs}G_W(R)&<\tfrac{11}{10},
\label{eq:profile-inequality-W}\\
\sup_{1\leq R\leq\Rs}\Psi_Z(R)&<0.
\label{eq:profile-inequality-Z}
\end{align}
\end{subequations}
\end{lemma}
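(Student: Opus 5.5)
The plan is a computer-assisted proof that reduces the three inequalities~\eqref{eq:three-profile-inequalities} to sign checks of explicit expressions in the phase-plane variables $(\Vbar,\Qbar)$, in the spirit of Lemma~\ref{lem:profile-signs}.

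\textbf{Step 1: express everything along the phase curve.} By Theorem~\ref{lem:guderley-profile-monotonicity-decay}, $\Vbar$ is strictly increasing on $[1,\Rs]$. We therefore use $v=\Vbar(R)\in[-\frac{2\crrb}{\gamma+1},\Vbar_6]$ as the independent variable and write $\Qbar=\mathcal Q(v)$. The quantities $R\partial_R\Vbar$ and $R\partial_R\Qbar$ are explicit rational functions of $(v,\mathcal Q(v))$ by~\eqref{eq:guderley:VQ-autonomous}, and $R\partial_R\Kbar$ is given by~\eqref{eq:profile-sign-A}. Hence $\overline{\rZ},\overline{\rA},\overline{\rW}$, all $\ell_{ij}$ in~\eqref{eq:profile-background-coefficients}, and $\LA,\LW$ are explicit functions of $(v,\mathcal Q(v))$.

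The terms $\mathcal F_V,\mathcal F_Q$ involve $R\partial_R\Ybar$. These are obtained by differentiating the autonomous system once more, so they too are explicit in $(v,\mathcal Q)$. The remaining quantity is $\log R$, which we recover as
\begin{equation*}
\log R(v)=\int_{-\frac{2\crrb}{\gamma+1}}^{v}\tfrac{\Delta}{\Delta_{\Vbar}}[w,\mathcal Q(w)]\,\dd w .
\end{equation*}
With the same change of variables, the ODEs~\eqref{eq:profile-gain-equations} for $G_A$ and $G_W$ become linear ODEs in $v$ with coefficients $\frac{\ell_{AA}}{\LA}\frac{\Delta}{\Delta_{\Vbar}}$, and similarly for $W$.

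\textbf{Step 2: rigorous enclosures for $\mathcal Q$, $\log R$ and $\crrb$.} We would construct explicit polynomial upper and lower barriers $\mathcal Q_-\le\mathcal Q\le\mathcal Q_+$ on the $v$-interval. These are verified by the barrier criterion: the vector field~\eqref{eq:guderley:VQ-autonomous} points strictly inward across each barrier, which is checked by interval arithmetic.

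Near the sonic point $P_6$ the quotient $\Delta_{\Qbar}/\Delta_{\Vbar}$ degenerates. There we would instead use the convergent Taylor expansion with coefficients $\eta_\ell$ from~\eqref{eq:guderley:sonic-coefficient-recurrence}, truncated at some order, with a rigorous tail bound. The two regions are glued on an overlap.

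The exponent $\crrb$ is only known through the shooting condition, so we enclose it in a tight interval. The dependence on $\gamma\in(1,\frac53]$ is handled by subdividing this range into many small subintervals and enclosing $\crrb(\gamma)$ on each. As $\gamma\downarrow1$ one has $\alpha\to0$, and the quantities need rescaling; for instance $\Qbar\sim\alpha^{-1/2}$ at the shock, and there the natural variable is $\alpha\Qbar$. A separate asymptotic argument, or a rescaled verification, will be needed on a small neighbourhood of $\gamma=1$.

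\textbf{Step 3: the three inequalities.} For~\eqref{eq:profile-inequality-Z}, $\Psi_Z$ is a pointwise expression. We subdivide the $v$-interval, evaluate $\Psi_Z$ on boxes using the enclosures of $\mathcal Q$ and $\log R$, and check that it is negative. The $\mathcal J_Z$ term contains absolute values, which are harmless in interval arithmetic.

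For~\eqref{eq:profile-inequality-A} and~\eqref{eq:profile-inequality-W}, we integrate the linear ODEs for $G_A$ and $G_W$ in $v$ with a validated integrator. The simplest version uses piecewise-constant interval enclosures of the coefficients on each subinterval together with a Gr\"onwall-type step, which gives an upper bound for $G$ that is propagated from $G_A(1)=b_A(\gamma)$ and $G_W(1)=|b_W(\gamma)|$. Since $\LA,\LW\ge\frac{\alpha\crrb}{1+\alpha}$ by~\eqref{eq:out:strat:aw} and $\Delta/\Delta_{\Vbar}$ stays bounded up to $P_6$, the coefficients remain bounded and the integration is stable.

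\textbf{Main obstacle.} I expect the hardest parts to be uniformity near the sonic point and near $\gamma=1$, together with the limited margin in~\eqref{eq:profile-inequality-A} and~\eqref{eq:profile-inequality-W}. The constants $\frac{21}{10}$ and $\frac{11}{10}$ suggest the margins are thin. If the naive Gr\"onwall bound is too lossy, I would first refine the $v$-mesh and use higher-order Taylor models for the coefficients before redesigning the barriers.
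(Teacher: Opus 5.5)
Your overall architecture is the paper's: phase-plane barriers, an enclosure of $\crrb$, interval arithmetic, and a pointwise sign check for $\Psi_Z$. Replacing the paper's comparison functions by validated integration of the linear ODEs would be a legitimate variant. The paper never integrates. It bounds $G_A$ by the constant supersolution $1-\alpha$, checking $\ell_{AA}(1-\alpha)+\widetilde{\mathcal R}_A<0$ and $b_A(\gamma)<1-\alpha$. It bounds $G_W$ by an explicit $H(\theta,x)$ with $H(1)=-b_W(\gamma)$, $H\le\frac{549}{500}$ and a pointwise differential inequality. It replaces $\log R$ by an explicit upper function $R^\sharp$; an upper bound suffices since $\mathcal J_i\ge0$. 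Everything then reduces to sign checks.

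\textbf{The gap at $\gamma\downarrow1$.} The genuine gap is this regime, which you defer to ``a separate asymptotic argument''. The lemma is claimed for every $\gamma\in(1,\frac53]$. No finite subdivision in $\gamma$ reaches arbitrarily close to $1$ unless the checked expressions extend continuously to a closed box containing $\gamma=1$. In your variables they do not, and your claim that the coefficients stay bounded holds only for fixed $\gamma$:
\begin{itemize}
\item $\LA(1^+)=\frac{\alpha\crrb}{1+\alpha}\to0$ and $\Qbar\to\infty$.
\item Even $\alpha\Qbar\to0$, so it is not the right normalization.
\item The shock-to-sonic segment itself collapses: with $\theta=\sqrt{2(\gamma-1)/\gamma}$, $\Vbar$ ranges over an interval of length about $\crrb\theta/2$, and $\log\Rs=O(\theta)$.
\end{itemize}
So your $v$-parametrization, the coefficient $\frac{\ell_{AA}}{\LA}\frac{\Delta}{\Delta_{\Vbar}}$ (which has a $1/X$-type blowup at the shock) and your barriers for $\mathcal Q$ all degenerate.

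The missing idea is the paper's uniform normalization $X=\frac{\crrb+\Vbar}{\crrb\theta}$, $C=\frac{\alpha\Qbar}{\crrb\theta}$, $z=\frac{\crrb^{-1}-1}{2\gamma}$, $x=\frac{X_6-X}{L}$. In these variables the shock point $(\frac{\theta}{4-\theta^2},\frac{2}{4-\theta^2})$ and the sonic point $(X_6,X_6)$ have finite limits. This is combined with several further devices:
\begin{itemize}
\item the shock barrier $C=(X_0/X)^kS$, needed because the field has a $1/X$ singularity while $X_0\to0$, which produces $\theta\log\theta$ behaviour near the shock;
\item the auxiliary functions $\phi$, $E$, $J$;
\item the splittings in $\xi=(1-x)/\theta$ and $\nu=\theta/(1-x)$.
\end{itemize}
Together these let the first box $0\le\theta\le\theta_1$, which contains $\theta=0$, be verified directly.

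\textbf{Steps asserted but not constructed.} ``Enclose $\crrb$ in a tight interval'' needs a mechanism that certifies the true exponent. The paper builds barriers valid for all $z\in[z_-,z_+]$ that enclose both the shock solution $\calC_1$ and the analytic sonic solution $\calC_6$. It shows that at $x=\frac12$ these are strictly ordered in opposite ways at the two endpoints: $C_1^\flat>C_6^\sharp$ at $z_-$ and $C_6^\flat>C_1^\sharp$ at $z_+$. By continuity and the intermediate value theorem this gives a matching $z$ in between. Uniqueness of the shock-to-$P_6$ connection (Lemma~5.4 of~\cite{JLS25b}) then identifies it with the Guderley parameter. Only after this is the true profile known to lie inside the barriers.

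Near $P_6$, your rigorous tail bound for the Taylor series would require explicit bounds on all $\eta_\ell$, uniform in $(\gamma,z)$, and the recurrence does not provide these. The paper avoids the issue. It takes as barriers the Taylor cubic with its cubic coefficient perturbed, checks the sign of the deflated defect $\mathfrak I[C]/(\theta x^3)$, and concludes by a first-touching argument that needs only $c_3\neq0$.
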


\begin{proof} Lemma~\ref{lem:ca-common-stability} in Appendix~\ref{app:three-profile-interval-proof} proves the three inequalities using a computer-assisted argument.  We refer the reader to Section~\ref{subsec:ca-computational-proofs} of Appendix~\ref{app:three-profile-computation} for the details of the computer-assisted implementation.\end{proof}

\section{The nonlinear analysis}\label{sec:main-result}\label{sec:nonlinear-bootstrap}

We now carry out the proof of nonlinear stability. The precise form of Theorem~\ref{thm:intro-main-physical} is Corollary~\ref{thm:main-physical} below, which follows from the following theorem in modulated self-similar variables.

\begin{theorem}[Nonlinear stability in modulated self-similar variables]\label{thm:main-self-similar} Fix $d=3$, $1<\gamma\leq\frac{5}{3}$, and let $\alpha=\frac{\gamma-1}{2}$. Let $(\Vbar,\Qbar,\Kbar)$ be the corresponding Guderley profile in~\eqref{eq:guderley-VQK-background}, with similarity exponents $(\crrb,\crrb-1,\crrb-1)$. There exist constants $\varepsilon>0$, $\lambda>0$, $\mathsf{C}\geq1$, and a positive $C^1$ weight $\psi$ such that the following holds.

Consider initial data of the form~\eqref{eq:initial-data-self-similar}, with $p_0 \ge 0$, $\Cr(0)=1$, and $\Cu(0)=\Cb(0)>0$. Denote the exterior initial data in the variables~\eqref{eq:VQK-def} by $(V,Q,K)(\cdot,0)$, and define their perturbations and differentiated perturbations as in~\eqref{eq:VQK-perturbations} and~\eqref{eq:ring-perturbation}, respectively. Assume that:

\begin{enumerate}[label=(\roman*),leftmargin=2em] \item \emph{(Regularity).} The initial data are $C^1$ regular for $R\ge1$, with $Q(R,0)>0$ and $B(R,0)>0$ for $R>1$.

\item \emph{(Compatibility at the shock).} The exterior traces at $R=1^+$ satisfy the zeroth-order conditions~\eqref{eq:initial-finite-pressure-shock-traces} and the first-order conditions~\eqref{eq:initial-differentiated-compatibility}.

\begin{samepage}\item \emph{(Smallness).} The initial perturbations and pressure ratio satisfy
\begin{equation}
\sup_{R\geq1}R^{\frac{1}{\crrb}}\left(|\widetilde V(R,0)|+\alpha|\widetilde Q(R,0)|+\tfrac{|\widetilde B(R,0)|}{R}\right)\leq\varepsilon^2.
\label{eq:nonlinear-bootstrap-initial-far-field}
\end{equation}
\begin{subequations}
\label{eq:nonlinear-bootstrap-initial-ring-bounds}
\begin{equation}
\max\left\{\|\psi\tZ(0)\|_{L^\infty([1,\infty))},\tfrac{1}{1-\alpha}\|\psi\tA(0)\|_{L^\infty([1,\infty))},\tfrac{10}{11}\|\psi\tW(0)\|_{L^\infty([1,\infty))}\right\}\leq\varepsilon^2.
\label{eq:nonlinear-bootstrap-initial-Z-bound}
\end{equation}
\begin{equation}
\left\|R^{\frac{1}{\crrb}}\Ytil(0)\right\|_{L^\infty((1,\infty))}\leq\varepsilon^2,\qquad \tfrac{p_0}{\Cu(0)^2}\leq\varepsilon^2.
\label{eq:nonlinear-bootstrap-initial-sharp-ring-bound}
\end{equation}
\end{subequations}
\end{samepage}
\end{enumerate}

Under these assumptions, there exists a unique classical solution $(V,Q,K)$ of~\eqref{eq:VQK-system} for all $R\ge1$ and $\tau\geq0$, with the prescribed initial data, that satisfies the Rankine--Hugoniot jump conditions~\eqref{eq:finite-pressure-RH-traces} at $R=1$, and the modulation choice~\eqref{eq:cr:0}. In particular, the shock remains fixed at $R=1$, and the following estimates hold for every $\tau\geq0$:

\begin{enumerate}[label=(\alph*),ref=\alph*,leftmargin=2em] \item\label{item:main-self-similar-decay} \emph{(Exponential decay of the perturbation).} The differentiated and undifferentiated perturbations satisfy
\begin{subequations}
\label{eq:main-self-similar-perturbation-decay}
\begin{align}
\max\left\{\|\psi\tZ(\tau)\|_{L^\infty([1,\infty))},\tfrac{1}{1-\alpha}\|\psi\tA(\tau)\|_{L^\infty([1,\infty))},\tfrac{10}{11}\|\psi\tW(\tau)\|_{L^\infty([1,\infty))}\right\}
&\leq \varepsilon e^{-\lambda\tau},
\label{eq:main-self-similar-ring-decay}\\
\sup_{R\geq1}R^{\frac{1}{2\crrb}}\left(|\widetilde V|+\alpha|\widetilde Q|+R|\partial_R\widetilde V|+\alpha R|\partial_R\widetilde Q|+\tfrac{|\widetilde B|+R|\partial_R\widetilde B|}{R}\right)(R,\tau)
&\leq \mathsf{C}\varepsilon e^{-\lambda\tau}.
\label{eq:main-self-similar-VQ-decay}
\end{align}
\end{subequations}

\item\label{item:main-self-similar-modulation} \emph{(Exponential decay of the modulation).} The modulation perturbations satisfy
\begin{equation}
|\tcuu(\tau)|+|\tcbb(\tau)|\leq \mathsf{C}\varepsilon e^{-\lambda\tau}.
\label{eq:main-self-similar-modulation-decay}
\end{equation}
In particular, $(\crr,\cuu,\cbb)$ converges exponentially to $(\crrb,\crrb-1,\crrb-1)$.

\item\label{item:main-self-similar-sharp-bounds} \emph{(Sharp spatial bounds and positivity).} Uniformly for $R\geq1$ and $\tau\geq0$,
\begin{subequations}
\label{eq:main-self-similar-sharp-bounds}
\begin{align}
\sup_{R\geq1}R^{\frac{1}{\crrb}}\left(|\Ytil|+|\widetilde V|+\alpha|\widetilde Q|\right)(R,\tau)
&\leq \mathsf{C}\varepsilon,
\label{eq:main-self-similar-sharp-VQ-bound}\\
(1-\mathsf{C}\varepsilon)\Qbar(R)\leq Q(R,\tau)
&\leq(1+\mathsf{C}\varepsilon)\Qbar(R),
\label{eq:main-self-similar-Q-comparison}\\
\|\widetilde K(\tau)\|_{L^\infty((1,\infty))}
&\leq \mathsf{C}\varepsilon.
\label{eq:main-self-similar-global-K-bound}
\end{align}
\end{subequations}

\item\label{item:main-self-similar-convergence} \emph{(Convergence of undifferentiated variables).} For every $0<\eta<\frac{1}{2\crrb}$ we have
\begin{equation*}
\sup_{R\geq1}R^{\frac{1}{\crrb}-\eta}
\left(|\widetilde V|+\alpha|\widetilde Q|+\tfrac{|\widetilde B|}{R}\right)(R,\tau)
\leq \mathsf{C}\varepsilon e^{-2\crrb\eta\lambda\tau}.
\end{equation*}
\end{enumerate}\end{theorem}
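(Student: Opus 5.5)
The proof will be a continuity (bootstrap) argument in $\tau$, combined with local well-posedness of the shock-front problem (Li, Dafermos), which lets us continue the solution as long as the $C^1$ norms stay bounded and the Lax conditions hold.

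\textbf{Bootstrap hypotheses.} On $[0,T]$ we assume:
\begin{enumerate}
\item the weighted bound~\eqref{eq:main-self-similar-ring-decay} with $\varepsilon$ replaced by $2\varepsilon$;
\item $|\tcuu(\tau)|\le 2\mathsf C\varepsilon e^{-\lambda\tau}$;
\item the sharp bound $\|R^{1/\crrb}\Ytil(\tau)\|_{L^\infty}\le \mathsf C\varepsilon$.
\end{enumerate}

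\textbf{First consequences.} From (ii), $\cuu=\crrb-1+\tcuu$ with $\crrb<1$, so $\Cu(\tau)\gtrsim \Cu(0)e^{(1-\crrb)\tau/2}$. Hence $p_0/\Cu^2\le \varepsilon^2 e^{-(1-\crrb)\tau}$, and every $\Ocal(p_0/\Cu^2)$ error in~\eqref{eq:finite-pressure-RH-solved-feedback} and~\eqref{eq:finite-pressure-RH-trace-bound} decays exponentially, provided $\lambda<1-\crrb$.

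Next we recover the undifferentiated variables. We integrate~\eqref{eq:ring-inverse} from $R=1$, using the trace bound~\eqref{eq:finite-pressure-RH-trace-bound}. This gives $|\widetilde V|+\alpha|\widetilde Q|\lesssim \varepsilon e^{-\lambda\tau}\int_1^R \psi^{-1}\,\dd R'/R'$ near the shock. In the far field we use the $R^{-1/\crrb}$ weight from (iii). Interpolating the two bounds gives~\eqref{eq:main-self-similar-VQ-decay} and part~(d). The entropy is handled separately: $\widetilde K$ is controlled by integrating~\eqref{eq:K-eq} along $\crrb+V$ characteristics, which are outgoing by~\eqref{eq:out:strat:aw}. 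This gives~\eqref{eq:main-self-similar-global-K-bound} and the comparison~\eqref{eq:main-self-similar-Q-comparison}. In particular $Q>0$, the Lax conditions persist, and the inverse-function step behind~\eqref{eq:finite-pressure-RH-solved-feedback} is legitimate.

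\textbf{Improving (i).} We write the $\Ytil$ system~\eqref{eq:ring-perturbation-system} as the linearization~\eqref{eq:profile-linearized-system} plus quadratic terms of size $\varepsilon^2 e^{-2\lambda\tau}$. We then split $[1,\infty)$ into regions.

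\emph{Region $1\le R\le R_2$, with $R_2$ slightly above $\Rs$.} We use the weights $1,\frac1{1-\alpha},\frac{10}{11}$ from the bootstrap norm and set $\psi\equiv1$ there.
\begin{itemize}
\item For $\tZ$, we evaluate the equation at a point of maximum, or integrate along the incoming $\Lambda_1$-characteristic. The off-diagonal terms are bounded via the signs in Lemma~\ref{lem:profile-signs}. The feedback $\tcuu\Ybar=\kappa_\gamma\tZ(1^+)\Ybar$ contributes the term $\kappa_\gamma\overline{\rZ}$. The Volterra terms $\widetilde V\mathcal F_V+\alpha\widetilde Q\mathcal F_Q$ are bounded by $\log R\,\mathcal J_Z$ times the norm. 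The resulting rate is $\Psi_Z<0$ by~\eqref{eq:profile-inequality-Z}, which gives decay with some rate $\lambda$.
\item For $\tA$ and $\tW$, we integrate along the outgoing $\LA$ and $\LW$ characteristics starting from the shock, where the boundary values $b_A\tZ$ and $b_W\tZ$ are given by~\eqref{eq:finite-pressure-RH-solved-feedback}. Comparing with the ODEs~\eqref{eq:profile-gain-equations}, the amplification is $G_A$ or $G_W$. By~\eqref{eq:profile-inequality-A} and~\eqref{eq:profile-inequality-W} these stay strictly below the weights $1-\alpha$ and $\tfrac{11}{10}$.
\item Characteristics that start at $\tau=0$ rather than at the shock are controlled by the initial data bound $\varepsilon^2$.
\end{itemize}
Strict inequalities and continuity extend all of this from $\Rs$ to $R_2$, and leave room to absorb the $e^{\lambda\tau}$ factor and the nonlinear errors.

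\emph{Region $[R_2,R_3]$.} All three speeds are positive here. We use an increasing weight $\psi$ so that the term $-\Tmat R\partial_R\log\psi$ dominates the coupling terms, as in~\cite{CSV26}.

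\emph{Region $R\ge R_3$.} The damping $\Dmat\approx I$ and the decay~\eqref{eq:guderley-profile-decay} of the profile close the weighted estimate, including the $R^{1/\crrb}$-weighted bound (iii). The only delicate point is that the growth of the weight is compensated by transport, since $\crrb R\partial_R R^{1/\crrb}=R^{1/\crrb}$ cancels against the damping up to small terms.

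\textbf{Improving (ii) and closing.} Finally, (ii) follows from~\eqref{eq:finite-pressure-RH-solved-feedback-cu}, and $\tcbb=\tcuu$. Choosing $\varepsilon$ small then strictly improves every bootstrap hypothesis, which closes the argument.

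\textbf{Main obstacle.} The hardest step is the near-shock region. There the linear coupling is $O(1)$ and the $\Lambda_1$-characteristics are incoming, so decay cannot come from transport. It has to come from the precise balance encoded in Lemma~\ref{lem:three-profile-inequalities}. I would first verify that the pointwise maximum-principle argument for the coupled $(\tZ,\tA,\tW)$ system, with the nonlocal terms $\tcuu(\tau)$ and $\widetilde V,\widetilde Q$ given as integrals from the shock, reproduces exactly the quantities $\Psi_Z$, $\mathcal R_A$ and $\mathcal R_W$.
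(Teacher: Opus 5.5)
Your overall architecture is the paper's. It consists of a bootstrap on the $\psi$-weighted differentiated perturbations and on $\tcuu$, and exponential growth of $\Cu$ (from $\crrb<1$) so that all $p_0$-errors decay. On $[1,R_2]$ it uses the three profile inequalities: $\Psi_Z$ for $\tZ$, whose backward characteristics never leave $[1,R_2]$, and $G_A,G_W$ for $\tA,\tW$ integrated from the shock. It then uses weight-generated damping on $[R_2,R_3]$ and $\overline{\Dmat}\approx I$ for $R\ge R_3$. Bootstrapping the sharp $R^{1/\crrb}$ bound, instead of deriving it afterwards by Gr\"onwall as the paper does, is harmless.

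\textbf{The step that fails as written.} On $[R_2,R_3]$ the weight must be \emph{decreasing}, not increasing. In the weighted system the zeroth-order coefficient is $\Dmat-\Tmat R\partial_R\log\psi$, and all entries of $\Tmat$ are $\ge\delta>0$ there. So $R\partial_R\log\psi>0$ removes damping rather than adding it. The extra term $-\Tmat R\partial_R\log\psi>0$ is needed precisely to absorb couplings that are of the same order as the bootstrap norm:
\begin{itemize}
\item the off-diagonal entries of $\overline{\Dmat}$;
\item $\Ncal(\Ybar,\cdot)+\Ncal(\cdot,\Ybar)$;
\item $\tcuu\Ybar$, with $\tcuu\approx\kappa_\gamma\tZ(1^+,\tau)$;
\item the nonlocal terms $\widetilde V\mathcal F_V+\alpha\widetilde Q\mathcal F_Q$.
\end{itemize}

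A decreasing weight is also what controls the nonlocal terms. Reconstructing from the shock gives
\[
\psi(R)|\widetilde V(R)|\lesssim\varepsilon e^{-\lambda\tau}\int_1^R\tfrac{\psi(R)}{\psi(S)}\tfrac{\dd S}{S},
\]
and you need $\psi(R)/\psi(S)\le1$ for $S\le R$.

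The paper imposes
\[
\min_i\{(\overline{\Dmat})_{ii}-(\overline{\Tmat})_{ii}R\partial_R\log\psi\}\ge\tfrac12+2\mathsf M(1+\log R)R^{-1/\crrb}\quad\text{on }[R_2,R_3],
\]
with $\mathsf M$ bounding exactly those couplings. It lets $\psi$ grow, like $R^{1/(2\crrb)}$, only for $R\ge R_3$, where $\overline{\Dmat}\approx I$ still beats $\crrb\cdot\frac{1}{2\crrb}=\frac12$. Your proposal leaves this far-field growth unspecified, but it matters. It gives $\int_1^\infty\psi^{-1}\frac{\dd R}{R}<\infty$, hence global decaying $L^\infty$ bounds on $\widetilde V,\widetilde Q$. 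It also gives the $R^{1/(2\crrb)}$ weight in~\eqref{eq:main-self-similar-VQ-decay} directly, rather than through interpolation with the non-decaying sharp bound, which costs part of the rate.

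\textbf{Smaller points.}
\begin{itemize}
\item Since the damping must already act at $R_2^+$, a $C^1$ weight cannot be $\equiv1$ on all of $[1,R_2]$. The paper makes $\psi$ slightly decreasing on $[R_1,R_2]$ with $\psi(R_2)=1$, and spends the slack of the inner estimates, $\mathsf c\,\psi(R_1)<\frac{1+\mathsf c}{2}$.
\item The comparison $Q\simeq\Qbar$ does not come from the $\widetilde K$ estimate. It comes from integrating the sharp bound on $\Ytil$ from $R=\infty$. That first requires showing $\widetilde V,\widetilde Q\to0$ as $R\to\infty$ for every $\tau$; the paper propagates this from the data via the equations at fixed $R$.
\item Transport of $\widetilde K$ has no damping, so the decaying $\widetilde B$-part of~\eqref{eq:main-self-similar-VQ-decay} cannot come from integrating the $K$-equation along characteristics. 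It must come from $R\partial_R\widetilde K=\frac{\gamma}{\alpha}\bigl(\frac{\tA}{Q}-\frac{\overline{\rA}\widetilde Q}{Q\Qbar}\bigr)$ integrated from the shock, using $Q\simeq\Qbar\simeq R^{-1/\crrb}$.
\end{itemize}
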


We express Theorem~\ref{thm:main-self-similar} in physical coordinates.

\begin{corollary}[Nonlinear stability in physical variables]
\label{thm:main-physical}
Fix $d=3$, $1<\gamma\leq\frac{5}{3}$, and the associated Guderley profile $(\bar U,\bar\Sigma,\bar B)$, with similarity exponent $\crrb$. There exists $\varepsilon>0$ such that the following hold. Consider radial initial data $(\rho,u^r,p)(\cdot,-1)$ for~\eqref{eq:euler:cl} with a single shock at $r=1$. Assume that:

\begin{enumerate}[label=(\roman*),leftmargin=2em]
\item \emph{(Regularity and interior state).} The data are $C^1$ on each side of the shock, with $\rho(r,-1)>0$ and $p(r,-1)>0$ for $r>1$, and
\begin{equation*}
(\rho,u^r,p)(r,-1)=(1,0,p_0),\qquad 0\leq r<1,
\end{equation*}
where $p_0\geq0$.

\item \emph{(Compatibility at the shock).} The exterior traces satisfy the Rankine--Hugoniot conditions with initial shock speed $\ds(-1)<0$. More precisely, with the normalization
\begin{equation*}
\Cr(0)=1,\qquad \Cu(0)=\Cb(0)=-\tfrac{\ds(-1)}{\crrb},
\end{equation*}
the corresponding self-similar traces satisfy~\eqref{eq:initial-finite-pressure-shock-traces} and~\eqref{eq:initial-differentiated-compatibility}.

\item \emph{(Weighted perturbation bounds).} Expressed in the variables~\eqref{eq:modulated-ansatz} and~\eqref{eq:VQK-def} at $\tau=0$, the exterior data satisfy the undifferentiated weighted bound~\eqref{eq:nonlinear-bootstrap-initial-far-field}, the differentiated bounds~\eqref{eq:nonlinear-bootstrap-initial-ring-bounds}, and the pressure smallness~\eqref{eq:nonlinear-bootstrap-initial-sharp-ring-bound}.

\item \emph{(Physical velocity smallness).} The initial velocity satisfies
\begin{equation*}
\|u^r(\cdot,-1)-\bar u(\cdot,-1)\|_{L^\infty((1,\infty))}\leq\varepsilon^2.
\end{equation*}
\end{enumerate}

Under these assumptions, there exist a finite time $t_\ast>-1$ and a unique radial classical shock solution of~\eqref{eq:euler:cl} on $-1\leq t<t_\ast$, with the prescribed initial data. The following conclusions hold:

\begin{enumerate}[label=(\alph*),ref=\alph*,leftmargin=2em]

\item\label{item:main-physical-focusing} \emph{(Shock focusing).} As $t\uparrow t_\ast$, the shock radius $\ss(t)$ converges to the origin with the following rates
\begin{equation}
\s(t)\simeq(t_\ast-t)^{\crrb},\qquad -\ds(t)\simeq(t_\ast-t)^{\crrb-1}.
\label{eq:main-physical-focusing-rates}
\end{equation}
Moreover,
\begin{equation}
|t_\ast|\leq \mathsf{C}\varepsilon.
\label{eq:main-physical-focusing-time}
\end{equation}

\item \label{item:main:rh} \emph{(Rankine--Hugoniot jump conditions and Lax inequality)} For all $-1\le t < t_*$, the solution satisfies the Rankine--Hugoniot jump condition~\eqref{intro:RH} and Lax inequalities~\eqref{intro:lax} along the shock $\ss(t)$.

\item\label{item:main-physical-blowup} \emph{(Blowup rates and density bounds).} The velocity and sound speed satisfy
\begin{equation}
\|u^r(\cdot,t)\|_{L^\infty((0,\infty))}\simeq(t_\ast-t)^{\crrb-1},\qquad
\|\sigma(\cdot,t)\|_{L^\infty((0,\infty))}\simeq(t_\ast-t)^{\crrb-1}
\label{eq:main-physical-blowup-rates}
\end{equation}
as $t\uparrow t_\ast$, while the density remains uniformly bounded.

\item\label{item:main-physical-interior} \emph{(Interior state).} Ahead of the shock, 
\begin{equation}
(\rho,u^r,p)(r,t)=(1,0,p_0),\qquad 0\leq r<\s(t),\quad -1\leq t<t_\ast.
\label{eq:main-physical-interior-state}
\end{equation}

\item\label{item:main-physical-convergence} \emph{(Asymptotic self-similarity).} In self-similar coordinates (normalized by the shock coordinate), we have
\begin{equation}
\tfrac{t_\ast-t}{\s(t)}(u^r,\sigma,b)\bigl(\s(t)R,t\bigr)
-(\bar U,\bar\Sigma,\bar B)(R)\longrightarrow0
\label{eq:main-physical-profile-convergence}
\end{equation}
as $t\uparrow t_\ast$, locally uniformly for $R\in[0,\infty)\setminus\{1\}$.
\end{enumerate}
The constants in these estimates depend only on $\gamma$ and the fixed profiles.
\end{corollary}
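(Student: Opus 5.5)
The plan is to deduce Corollary~\ref{thm:main-physical} from Theorem~\ref{thm:main-self-similar} by undoing the change of variables \eqref{eq:modulated-coordinates}--\eqref{eq:modulated-ansatz}. Hypotheses (i)--(iii) are precisely the hypotheses of Theorem~\ref{thm:main-self-similar} once we set $\Cr(0)=1$ and $\Cu(0)=\Cb(0)=-\ds(-1)/\crrb$. Hypothesis (iv) will be used to control $\Cu(0)$: at $r=1$ the velocity trace is fixed by the Rankine--Hugoniot relations \eqref{eq:initial-finite-pressure-V-trace}, and combining this with (iv) gives $|\Cu(0)-1|\lesssim\varepsilon^2$. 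Theorem~\ref{thm:main-self-similar} then supplies a global solution $(V,Q,K)$ for $\tau\ge0$ together with the decay bounds (a)--(d). Using \eqref{eq:VQK-def} and \eqref{eq:modulated-ansatz}, I will define $(u^r,\sigma,b)$ for $r>\Cr(\tau)$, and set the constant state $(1,0,p_0)$ for $r<\Cr(\tau)$.

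\textbf{Step 1: time and rates.} From \eqref{eq:fixed-radial-equal-amplitude-consequences} we have $\Cr=e^{-\crrb\tau}$ and $\Cu(\tau)=\Cu(0)\exp\big((1-\crrb)\tau-\int_0^\tau\tcuu\big)$. By \eqref{eq:main-self-similar-modulation-decay}, $\int_0^\infty|\tcuu|\le \mathsf C\varepsilon/\lambda$, so $\Cu(\tau)\simeq e^{(1-\crrb)\tau}$ uniformly. Consequently $\Cr/\Cu\simeq e^{-\tau}$ is integrable, and \eqref{eq:modulated-time} defines $t_\ast:=-1+\int_0^\infty \Cr\Cu^{-1}\,\dd s<\infty$. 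This also gives
\[
t_\ast-t(\tau)=\int_\tau^\infty\Cr\Cu^{-1}\simeq e^{-\tau}.
\]
For the bound $|t_\ast|\le\mathsf C\varepsilon$, I compare with the unperturbed integrand $e^{-s}$: the difference is bounded by $|\Cu(0)^{-1}-1|+\mathsf C\varepsilon$, and this is exactly where (iv) enters. Since $\s(t)=\Cr(\tau)=e^{-\crrb\tau}\simeq(t_\ast-t)^{\crrb}$ and $-\ds=\crrb\Cu\simeq(t_\ast-t)^{\crrb-1}$, this proves (a).

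\textbf{Step 2: remaining conclusions.} Part (b) holds because $(V,Q,K)$ satisfies \eqref{eq:finite-pressure-RH-traces} at $R=1$, and this is equivalent to \eqref{intro:RH}. The Lax inequalities require $\gamma p_0<\crrb^2\Cu^2$, which follows from $p_0/\Cu(\tau)^2\le\varepsilon^2\Cu(0)^2/\Cu(\tau)^2\lesssim\varepsilon^2$. Part (d) follows from finite speed of propagation and the uniqueness of the local shock theory, as explained in the strategy section. For (c), note $\|u\|_\infty=\Cu\sup_R|RV|$. The function $RV$ is bounded: by \eqref{eq:main-self-similar-sharp-VQ-bound}, $|RV|\le R|\Vbar|+\mathsf C\varepsilon R^{1-1/\crrb}$, and $R\Vbar$ is bounded by \eqref{eq:guderley-profile-decay} since $1/\crrb>1$. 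It is also bounded below, by its value at $R=1$. Hence $\|u\|_\infty\simeq\Cu\simeq(t_\ast-t)^{\crrb-1}$, and the same argument applies to $\sigma$. The density is given by \eqref{eq:density-VQK} with $\Cu=\Cb$, namely $\rho=(\alpha Qe^{-K})^{1/\alpha}$. Using \eqref{eq:main-self-similar-Q-comparison} and \eqref{eq:main-self-similar-global-K-bound}, $\rho$ is comparable to $\bar{\mathsf R}$, which is bounded. Part (e) follows from $\frac{t_\ast-t}{\s(t)}\Cu(\tau)\to$ const. To get the normalization to be $1$ in the limit, I will check that $(t_\ast-t)\Cu/\Cr\to1$. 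This uses the convergence of $\crr,\cuu$: the ratio tends to $\lim \Cu e^{-(1-\crrb)\tau}\int_\tau^\infty \Cr\Cu^{-1}$, which equals $1$ after an l'Hôpital-type computation using $\cuu\to\crrb-1$. Combined with \eqref{item:main-self-similar-decay}, this gives the convergence.

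\textbf{Main obstacle.} The delicate step is Step 1's bound $|t_\ast|\le\mathsf C\varepsilon$ and the precise normalization in (e). Both require $\Cu(0)$ to be $\varepsilon$-close to $1$, and this is not part of Theorem~\ref{thm:main-self-similar}. I will extract it from (iv) at the shock trace, using the nondegeneracy of \eqref{eq:initial-finite-pressure-V-trace} in $\Cu(0)$: the leading term $-\frac{2\crrb}{\gamma+1}\Cu(0)$ compared with $\bar u(1,-1)=\bar U(1^+)$ gives $|\Cu(0)-1|\lesssim\varepsilon^2$ after absorbing the $p_0$ correction.
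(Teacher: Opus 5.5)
Your proposal is correct and follows the paper's proof essentially step for step. The common steps are the shock-trace identity at $r=1^+$ giving $|\Cu(0)-1|\le\mathsf C\varepsilon^2$ from (iv), the integral formula for $t_\ast$ and the resulting rates, the bounds on $\sup_{R\ge1}|RV|$ and $\sup_{R\ge1}|RQ|$ with lower bounds from the shock traces, and the limit of $\frac{(t_\ast-t)\Cu}{\Cr}$ (your l'H\^opital argument computes this just as well as the paper's dominated-convergence evaluation of $\int_0^\infty e^{-s+\int_\tau^{\tau+s}\tcuu(v)\,\dd v}\,\dd s\to1$).

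Two small fixes are needed in the write-up. The prefactor in your ratio should be $\Cr(\tau)^{-1}=e^{\crrb\tau}$ rather than $e^{-(1-\crrb)\tau}$. Also, (e) covers $0\le R<1$ as well; there $(\bar U,\bar\Sigma,\bar B)=0$, and the constant interior values are multiplied by $\frac{t_\ast-t}{\s(t)}\simeq(t_\ast-t)^{1-\crrb}\to0$.
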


The rest of this section proves Theorem~\ref{thm:main-self-similar} by a nonlinear bootstrap argument, using the three strict profile inequalities~\eqref{eq:three-profile-inequalities} from Lemma~\ref{lem:three-profile-inequalities}. Throughout the proof we use the modulation choice~\eqref{eq:cr:0}. 

\subsection{The perturbation equations}
We start by writing the perturbation equations satisfied by $\Ytil$, defined in~\eqref{eq:ring-perturbation}. Subtracting the stationary equation for $\Ybar$ from~\eqref{eq:ring-evolution} gives
\begin{equation}
\partial_\tau\Ytil+\Tmat R\partial_R\Ytil=\mathcal L_{\mathrm{bg}}\Ytil+\tcuu\Ybar+\widetilde V\mathcal F_V+\alpha\widetilde Q\mathcal F_Q-\bigl(\bigl(-\tcuu I+\widetilde V D^{(v)}+\alpha\widetilde QD^{(q)}\bigr)\Ytil+\Ncal(\Ytil,\Ytil)\bigr), \label{eq:nonlinear-bootstrap-perturbation-system}
\end{equation}
where we recall that the transport matrix $\Tmat$ is defined in~\eqref{eq:ring-transport-matrix}, the constant matrices $D^{(v)}$ and $D^{(q)}$ in~\eqref{eq:ring-constant-matrices}, and the bilinear form $\Ncal$ in~\eqref{eq:ring-quadratic-form}, the linear operator $\mathcal L_{\mathrm{bg}}$ in~\eqref{eq:profile-background-linearization}, and the forcing vectors $\mathcal F_V$ and $\mathcal F_Q$ in~\eqref{eq:profile-forcing-vectors}. This PDE is coupled at $R=1$ to the differentiated Rankine--Hugoniot boundary conditions
\begin{equation*}
\mathsf R_{\mathrm{RH}}
\begin{pmatrix}
\tZ\\[2pt]
\tA\\[2pt]
\tW
\end{pmatrix}(1^+,\tau)
=
\begin{pmatrix}
\tfrac{\gamma+1}{\crrb(\gamma-1)}\\[4pt]
\tfrac{\crrb\sqrt{2\gamma(\gamma-1)}}{\alpha(\gamma+1)}\\[4pt]
-\tfrac{2\crrb}{\gamma+1}
\end{pmatrix}
\tcuu(\tau)
+\mathcal P_{p_0}\bigl(\tau,\tcuu(\tau),\Ytil(1^+,\tau)\bigr),
\end{equation*}
where $\mathcal P_{p_0}$ is defined in~\eqref{eq:finite-pressure-RH-remainder-components}. The undifferentiated Rankine--Hugoniot conditions~\eqref{eq:finite-pressure-RH-tilded-traces} are
\begin{equation*}
\begin{aligned}
\widetilde V(1^+,\tau)&=\tfrac{2\gamma p_0}{(\gamma+1)\crrb\Cu(\tau)^2},\\ \widetilde Q(1^+,\tau)&=\Qbar(1^+)\left(\sqrt{\left(1-\tfrac{\alpha p_0}{\crrb^2\Cu(\tau)^2}\right)\left(1+\tfrac{\gamma p_0}{\alpha\crrb^2\Cu(\tau)^2}\right)}-1\right),\\ \widetilde K(1^+,\tau)&=\tfrac{1}{2}\log\left(1-\tfrac{\alpha p_0}{\crrb^2\Cu(\tau)^2}\right)+\tfrac{\gamma}{2}\log\left(1+\tfrac{\gamma p_0}{\alpha\crrb^2\Cu(\tau)^2}\right).
\end{aligned}
\end{equation*}

\subsection{Bootstrap assumptions}

We will use several constants and parameters throughout the proof, with the following order of dependence:
\begin{equation*}
R_2\longrightarrow R_1\longrightarrow\lambda\longrightarrow\mathsf c\longrightarrow\mathsf{M}\longrightarrow R_3\longrightarrow\mathsf{C}\longrightarrow\varepsilon.
\end{equation*}
Each parameter may depend on $\gamma$, the profiles, and the preceding constants. Schematically, $\lambda$ and $\varepsilon$ are chosen small enough; $\mathsf M$,  $\mathsf C$, $R_3$ are large constants; $ \mathsf{c}$ is chosen close to but smaller than $1$; $R_1$ and $R_2$ are chosen sufficiently close to the sonic point $R_s$.

By continuity across the sonic point, we start by fixing
\begin{equation*}
\Rs<R_1<R_2
\end{equation*}
and constants $0<\mathsf c<1$ and $0<\lambda \ll 1$ such that\footnote{We extend $G_A$ and $G_W$ beyond $R_s$ using their defining ODEs.}
\begin{equation}
\sup_{1\leq R\leq R_2}G_A(R)
<\mathsf c(1-\alpha),
\qquad
\sup_{1\leq R\leq R_2}G_W(R)<\tfrac{11}{10}\mathsf c,
\qquad
\sup_{1\leq R\leq R_2}\Psi_Z(R)<-4\lambda,
\label{eq:nonlinear-bootstrap-extended-profile-bounds}
\end{equation}
where we take $R_2$ close enough to $\Rs$ that the signs in Lemma~\ref{lem:profile-signs} also hold throughout $1\leq R\leq R_2$. We will also require
\begin{equation}
\lambda<1-\crrb.
\label{eq:nonlinear-bootstrap-lambda-choice}
\end{equation}
We now choose a positive $C^1$ weight $\psi$, with $\psi(R_2)=1$ and $\psi=\psi(R_1)$ on $[1,R_1]$. On $[R_1,R_2]$, we require
\begin{equation}
R\partial_R\log\psi(R)\leq0,
\qquad
\mathsf c\,\psi(R_1)<\tfrac{1+\mathsf c}{2}.
\label{eq:nonlinear-bootstrap-exterior-weight}
\end{equation}
We then introduce $R_3 >R_2$ (to be chosen later). On $[R_2,R_3]$, we choose $\psi$ sufficiently decreasing such that
\begin{equation}
\min_{1\leq i\leq3}\left\{(\overline{\Dmat})_{ii}-(\overline{\Tmat})_{ii}R\partial_R\log\psi\right\}
 \geq\tfrac12+2\mathsf{M}(1+\log R)R^{-\frac{1}{\crrb}},
\qquad R_2\leq R\leq R_3.
\label{eq:nonlinear-bootstrap-weight-damping}
\end{equation}
For $R\geq R_3$, we require
\begin{equation}
\begin{aligned}
0\leq R\partial_R\log\psi(R)&\leq\tfrac{3}{4\crrb}\quad\text{for }R\geq R_3,\\
R\partial_R\log\psi(R)&=\tfrac{1}{2\crrb},
\qquad
\psi(R)\simeq\psi(R_3)\left(\tfrac{R}{R_3}\right)^{\frac{1}{2\crrb}}
\quad\text{for }R\geq2R_3.
\label{eq:nonlinear-bootstrap-exterior-weight-derivative}
\end{aligned}
\end{equation}
In~\eqref{eq:nonlinear-bootstrap-weight-damping}, $\mathsf{M}$ is chosen large enough to bound the sum of the off-diagonal terms of $\overline{\Dmat}$, the mixed bilinear terms involving $\Ybar$, and the reconstruction and modulation terms in~\eqref{eq:ring-perturbation-system}. The weight choice is possible because the profile speeds are uniformly positive for $R\geq R_2$, by~\eqref{eq:nonlinear-bootstrap-background-outgoing-speeds}.  We choose $R_3$ large enough that $R_3>2R_2$ and~\eqref{eq:nonlinear-bootstrap-weight-damping} holds with $R\partial_R\log\psi=0$ on $[R_3/2,R_3]$. This is possible since $\overline{\Dmat}\to I$ as $R\to+\infty$. This allows us to connect $\psi$ to the far-field behavior as in~\eqref{eq:nonlinear-bootstrap-exterior-weight-derivative}. We also observe that
\begin{equation}
\int_1^\infty\tfrac{1}{\psi(R)}\tfrac{\dd R}{R}<\infty.
\label{eq:nonlinear-bootstrap-integrable-weight}
\end{equation}
To recapitulate, the weight $\psi$ is chosen as follows:
\begin{itemize}
\item It is constant, equal to $\psi(R_1)$, on $[1,R_1]$.
\item It is \emph{slightly decreasing} on $[R_1,R_2]$, so that $\psi(R_1)$ remains close to $1$, as quantified in~\eqref{eq:nonlinear-bootstrap-exterior-weight}, while matching the prescribed negative derivative at $R_2$.
\item It is nonincreasing on $[R_2,R_3]$ and satisfies~\eqref{eq:nonlinear-bootstrap-weight-damping}.
\item On $[R_3,\infty)$, where the dynamics are governed by the decay of the profile coefficients, it grows like $R^{\frac{1}{2\crrb}}$ at infinity.
\end{itemize}

\subsubsection{Bootstrap assumption}
Assume a classical solution of~\eqref{eq:VQK-system}, coupled to the shock conditions at $R=1$, exists for $0\leq\tau\leq T$.

We assume the following global weighted bootstrap on the differentiated variables:
\begin{equation}
\max\left\{\|\psi\tZ(\tau)\|_{L^\infty([1,\infty))},\tfrac{1}{1-\alpha}\|\psi\tA(\tau)\|_{L^\infty([1,\infty))},\tfrac{10}{11}\|\psi\tW(\tau)\|_{L^\infty([1,\infty))}\right\}
\leq\varepsilon e^{-\lambda\tau},
\label{eq:nonlinear-bootstrap-exterior-assumption}
\end{equation}
where $\varepsilon\ll1$ is chosen sufficiently small. Since $\psi(R)\ge 1$ on $[1,R_2]$, in particular we obtain the unweighted bounds
\begin{equation*}
\max\left\{\|\tZ(\tau)\|_{L^\infty([1,R_2])},\tfrac{1}{1-\alpha}\|\tA(\tau)\|_{L^\infty([1,R_2])},\tfrac{10}{11}\|\tW(\tau)\|_{L^\infty([1,R_2])}\right\}
\leq\varepsilon e^{-\lambda\tau}.
\end{equation*}
Finally, we assume the modulation bootstrap
\begin{equation}
|\tcuu(\tau)|+|\tcbb(\tau)|
\leq \varepsilon^{\frac{2}{3}}e^{-\lambda\tau},
\label{eq:nonlinear-bootstrap-modulation-assumption}
\end{equation}
where $\tcbb:=\cbb-(\crrb-1)=\tcuu$ by~\eqref{eq:fixed-radial-equal-amplitude-consequences}.

We also recall the qualitative assumptions on the initial data of Theorem~\ref{thm:main-self-similar}. The data have the form~\eqref{eq:initial-data-self-similar}, with $p_0\geq0$, $\Cr(0)=1$, and $\Cu(0)=\Cb(0)>0$ is defined by~\eqref{eq:cr:0} and~\eqref{eq:shock-freezing-speed} at $\tau=0$. In the exterior region, the data are $C^1$, with $Q(R,0)>0$ and $B(R,0)>0$, and their shock traces satisfy the zeroth-order compatibility conditions~\eqref{eq:initial-finite-pressure-shock-traces} and the first-order compatibility conditions~\eqref{eq:initial-differentiated-compatibility}.

The initial weighted bound~\eqref{eq:nonlinear-bootstrap-initial-ring-bounds} gives
\begin{equation*}
\max\left\{\|\psi\tZ(0)\|_{L^\infty([1,\infty))},\tfrac{1}{1-\alpha}\|\psi\tA(0)\|_{L^\infty([1,\infty))},\tfrac{10}{11}\|\psi\tW(0)\|_{L^\infty([1,\infty))}\right\}\leq\varepsilon^2.
\end{equation*}
The bound~\eqref{eq:nonlinear-bootstrap-initial-sharp-ring-bound} also gives $|\tZ(R,0)|,|\tA(R,0)|,|\tW(R,0)|\leq\varepsilon^2$ on $[1,R_2]$.
We also recall the assumption~\eqref{eq:nonlinear-bootstrap-initial-sharp-ring-bound} on internal pressure
\begin{equation*}
\tfrac{p_0}{\Cu(0)^2}\leq\varepsilon^2.
\end{equation*}

\subsection{Improving the bootstrap.}
We are now ready to prove Theorem~\ref{thm:main-self-similar}. We will use a standard bootstrap argument. We assume the bounds~\eqref{eq:nonlinear-bootstrap-exterior-assumption}--\eqref{eq:nonlinear-bootstrap-modulation-assumption}, and prove the improved version of such bounds:
\begin{subequations}
\label{eq:nonlinear-bootstrap-improvement-targets}
\begin{align}
\max\left\{\|\psi\tZ(\tau)\|_{L^\infty([1,\infty))},\tfrac{1}{1-\alpha}\|\psi\tA(\tau)\|_{L^\infty([1,\infty))},\tfrac{10}{11}\|\psi\tW(\tau)\|_{L^\infty([1,\infty))}\right\}
&\leq\tfrac{1+\mathsf c}{2}\varepsilon e^{-\lambda\tau},
\label{eq:nonlinear-bootstrap-exterior-target}\\
|\tcuu(\tau)|+|\tcbb(\tau)|
&\leq\tfrac12\varepsilon^{\frac{2}{3}}e^{-\lambda\tau}.
\label{eq:nonlinear-bootstrap-modulation-target}
\end{align}
\end{subequations}
 The local well-posedness of the system~\eqref{eq:VQK-system} with the shock conditions~\eqref{eq:finite-pressure-RH-traces} was proved by Majda in~\cite{Maj83exist,Maj83stab} in Sobolev spaces; in the radial case local well-posedness and continuity of the norms in $L^\infty$-based spaces follow from standard 1d theory, see for instance Li~\cite{Li80} or Dafermos~\cite{Daf10}. The standard one-dimensional local well-posedness theory can be easily adapted to the weighted norms used here by using finite speed of propagation and the decay of the initial data, and applying the estimates on compact sets.

\subsubsection{Improving the bootstrap for the modulation functions}
\label{subsec:bootstrap-modulation}
We first use the bootstrap on the modulation functions~\eqref{eq:nonlinear-bootstrap-modulation-assumption} to prove exponential growth of $C_u$. We recall the choice $\lambda<1-\crrb$ in~\eqref{eq:nonlinear-bootstrap-lambda-choice}. Since $\partial_\tau\Cu=-\cuu\Cu$ and $\cuu=\crrb-1+\tcuu$, we have
\begin{equation}
\Cu(\tau)=\Cu(0)\exp\left( (1-\crrb)\tau-\int_0^\tau\tcuu(s)\,\dd s \right)\geq e^{-\frac{\varepsilon^{\frac23}}{\lambda}} \Cu(0)e^{(1-\crrb)\tau}. \label{eq:nonlinear-bootstrap-Cu-growth}
\end{equation}
Thus $\Cu$ grows exponentially. In particular, by~\eqref{eq:nonlinear-bootstrap-initial-sharp-ring-bound} and~\eqref{eq:nonlinear-bootstrap-lambda-choice}, taking $\varepsilon^{\frac23}\leq\frac{\lambda\log 2}{2}$ gives
\begin{equation}
\tfrac{p_0}{\Cu(\tau)^2}\leq e^{\frac{2\varepsilon^{\frac23}}{\lambda}}\varepsilon^2e^{-2(1-\crrb)\tau}\leq2\varepsilon^2e^{-2\lambda\tau}.
\label{eq:nonlinear-bootstrap-pressure-decay}
\end{equation}
Recalling~\eqref{eq:finite-pressure-RH-solved-feedback-cu}, we have
\begin{equation}
\tcuu(\tau)=\kappa_\gamma\tZ(1^+,\tau) +\Ocal\left( \tfrac{p_0}{\Cu(\tau)^2} \left(1+|\tZ(1^+,\tau)|\right) \right), \label{eq:nonlinear-bootstrap-perturbed-shock-feedback}
\end{equation}
and using the bootstrap bound~\eqref{eq:nonlinear-bootstrap-exterior-assumption} and the pressure decay~\eqref{eq:nonlinear-bootstrap-pressure-decay}, we obtain
\begin{equation}
|\tcuu(\tau)|\leq \kappa_\gamma\varepsilon e^{-\lambda\tau} +\varepsilon^2e^{-2\lambda\tau} \left(\mathsf{C}+\mathsf{C}\varepsilon e^{-\lambda\tau}\right)\leq\mathsf{C}\varepsilon e^{-\lambda\tau}, \label{eq:nonlinear-bootstrap-cu-improvement}
\end{equation}
for $\varepsilon$ sufficiently small. Since $\tcbb=\tcuu$, we therefore obtain
\begin{equation}
|\tcuu(\tau)|+|\tcbb(\tau)| \leq\mathsf{C}\varepsilon e^{-\lambda\tau}\leq\tfrac12\varepsilon^{\frac23}e^{-\lambda\tau}, \label{eq:nonlinear-bootstrap-pressure-and-cu-control}
\end{equation}
after taking $\mathsf{C}\varepsilon^{\frac13}\leq\frac12$. This strictly improves~\eqref{eq:nonlinear-bootstrap-modulation-assumption} and proves~\eqref{eq:nonlinear-bootstrap-modulation-target}.

\subsubsection{Reconstruction bounds for the nonlinear coefficients}
We record the reconstruction needed in the differentiated equations. Using~\eqref{eq:nonlinear-bootstrap-pressure-decay} and the Rankine--Hugoniot conditions~\eqref{eq:finite-pressure-RH-tilded-traces} at $R=1$, we obtain
\begin{equation}
|\widetilde V(1^+,\tau)|
+\alpha|\widetilde Q(1^+,\tau)|
+|\widetilde K(1^+,\tau)|
\le \mathsf{C} \varepsilon^2e^{-2\lambda\tau}.
\label{eq:nonlinear-bootstrap-undifferentiated-shock-traces}
\end{equation}
Integrating~\eqref{eq:ring-inverse} from the shock gives, for $R\geq1$,
\begin{subequations}
\label{eq:nonlinear-bootstrap-undifferentiated-reconstruction}
\begin{align}
\widetilde V(R,\tau)
&=\widetilde V(1^+,\tau)
+\tfrac{1}{2}\int_1^R(\tZ+\tW)(S,\tau)\tfrac{\dd S}{S},
\label{eq:nonlinear-bootstrap-V-reconstruction}\\
\alpha\widetilde Q(R,\tau)
&=\alpha\widetilde Q(1^+,\tau)
+\int_1^R
\left(-\tfrac{\alpha}{2}\tZ+\tA
+\tfrac{\alpha}{2}\tW\right)(S,\tau)
\tfrac{\dd S}{S}.
\label{eq:nonlinear-bootstrap-Q-reconstruction}
\end{align}
\end{subequations}
In particular, the reconstruction on $[1,R_2]$ has only a logarithmic factor. The shock-trace bound~\eqref{eq:finite-pressure-RH-trace-bound} and the preceding formulas give
\begin{subequations}
\label{eq:nonlinear-bootstrap-undifferentiated-bounds}
\begin{gather}
|\widetilde V(R,\tau)|+\alpha|\widetilde Q(R,\tau)|
\leq \mathsf{C}\tfrac{p_0}{\Cu(\tau)^2}
+\mathsf{C}\log R\,\|\Ytil(\tau)\|_{L^\infty([1,R_2])},
\qquad 1\leq R\leq R_2,
\label{eq:nonlinear-bootstrap-inner-VQ-bound}\\
\|\widetilde V(\tau)\|_{L^\infty([1,\infty))}
+\alpha\|\widetilde Q(\tau)\|_{L^\infty([1,\infty))}
\leq\mathsf{C}\varepsilon e^{-\lambda\tau}.
\label{eq:nonlinear-bootstrap-global-VQ-bound}
\end{gather}
\end{subequations}
For~\eqref{eq:nonlinear-bootstrap-global-VQ-bound}, we also use the pressure bound~\eqref{eq:nonlinear-bootstrap-pressure-decay}, the bootstrap~\eqref{eq:nonlinear-bootstrap-exterior-assumption}, and the integrability~\eqref{eq:nonlinear-bootstrap-integrable-weight}; the constant may depend on the fixed weight. Moreover, $\inf_{R\geq1}\psi(R)=\psi(R_3)>0$.

Substituting~\eqref{eq:nonlinear-bootstrap-exterior-assumption}, \eqref{eq:nonlinear-bootstrap-pressure-decay}, and~\eqref{eq:nonlinear-bootstrap-cu-improvement} into these reconstruction bounds gives the estimate for the nonlinear terms in~\eqref{eq:nonlinear-bootstrap-perturbation-system}:
\begin{equation}
\psi\left|
\left[-\tcuu I+\widetilde V D^{(v)}
+\alpha\widetilde QD^{(q)}\right]\Ytil
+\Ncal(\Ytil,\Ytil)
\right|
\leq \mathsf{C}\varepsilon^2e^{-2\lambda\tau},
\qquad R\geq1.
\label{eq:nonlinear-bootstrap-quadratic-bounds}
\end{equation}

\subsection{Improving the bootstrap for differentiated variables on $[1, R_2]$}

Given $R\geq1$ and $\tau\geq0$, let $s\mapsto\Phi_i(R,\tau;s)$, for $i\in\{A,W,Z\}$, denote the self-similar $i$-characteristic through $(R,\tau)$.  These characteristics solve
\begin{subequations}
\label{eq:nonlinear-bootstrap-inner-characteristics}
\begin{align}
\tfrac{\dd}{\dd s}\Phi_A(R,\tau;s)
&=\bigl(\crrb+V(\Phi_A(R,\tau;s),s)\bigr)\Phi_A(R,\tau;s),
&
\Phi_A(R,\tau;\tau)&=R,
\label{eq:nonlinear-bootstrap-inner-A-characteristic}\\
\tfrac{\dd}{\dd s}\Phi_W(R,\tau;s)
&=\bigl(\crrb+V(\Phi_W(R,\tau;s),s)
+\alpha Q(\Phi_W(R,\tau;s),s)\bigr)\Phi_W(R,\tau;s),
&
\Phi_W(R,\tau;\tau)&=R,
\label{eq:nonlinear-bootstrap-inner-W-characteristic}\\
\tfrac{\dd}{\dd s}\Phi_Z(R,\tau;s)
&=\bigl(\crrb+V(\Phi_Z(R,\tau;s),s)
-\alpha Q(\Phi_Z(R,\tau;s),s)\bigr)\Phi_Z(R,\tau;s),
&
\Phi_Z(R,\tau;\tau)&=R.
\label{eq:nonlinear-bootstrap-inner-Z-characteristic}
\end{align}
\end{subequations}
We first justify the dichotomy used below.  The differentiated bootstrap~\eqref{eq:nonlinear-bootstrap-exterior-assumption}, the shock-trace estimate~\eqref{eq:nonlinear-bootstrap-undifferentiated-shock-traces}, and the reconstruction formulas~\eqref{eq:nonlinear-bootstrap-undifferentiated-reconstruction} give~\eqref{eq:nonlinear-bootstrap-global-VQ-bound}.  By Theorem~\ref{lem:guderley-profile-monotonicity-decay}, the profile $\Vbar$ is increasing and $\Qbar>0$ on $[1,\infty)$.  Thus the background value of~\eqref{eq:ring-transport-matrix} and the shock value~\eqref{eq:guderley:VQ-shock-data} give
\begin{equation*}
\LW(R)\geq\LA(R)=\crrb+\Vbar(R)\geq\crrb+\Vbar(1^+)=\tfrac{\alpha\crrb}{1+\alpha}>0,\qquad R\geq1.
\end{equation*}

Consequently, after taking $\varepsilon$ sufficiently small, for every $R\geq1$ and $0\leq s\leq T$ we obtain
\begin{subequations}
\label{eq:nonlinear-bootstrap-positive-inner-AW-speeds}
\begin{align}
\crrb+V(R,s)
&=\LA(R)+\widetilde V(R,s)
\geq\tfrac{\alpha\crrb}{1+\alpha}-\mathsf{C}\varepsilon^{\frac23}e^{-\lambda s}\geq\tfrac{\alpha\crrb}{2(1+\alpha)}>0,
\label{eq:nonlinear-bootstrap-positive-inner-A-speed}\\
\crrb+V(R,s)+\alpha Q(R,s)
&=\LW(R)+\widetilde V(R,s)+\alpha\widetilde Q(R,s)
\geq\tfrac{\alpha\crrb}{2(1+\alpha)}>0.
\notag
\end{align}
\end{subequations}

It follows from~\eqref{eq:nonlinear-bootstrap-inner-characteristics} that $s\mapsto\Phi_A(R,\tau;s)$ and $s\mapsto\Phi_W(R,\tau;s)$ are strictly increasing.  Thus, when followed backward from $(R,\tau)$, each characteristic has exactly two possibilities: it meets the shock $R=1$ at some $\tau'\in[0,\tau]$, or it remains strictly larger than $1$ and reaches $s=0$. 

\subsubsection{The $\rA$-component}
We start by writing the $\rA$-row of~\eqref{eq:nonlinear-bootstrap-perturbation-system}; using~\eqref{eq:ring-constant-matrices} and~\eqref{eq:ring-quadratic-form}, this is
\begin{subequations}
\begin{align}
\left(\partial_\tau+(\crrb+V)R\partial_R\right)\tA
={}&\ell_{AA}\tA+\ell_{AZ}\tZ+\ell_{AW}\tW+\tcuu\overline{\rA}+\widetilde V(\mathcal F_V)_A+\alpha\widetilde Q(\mathcal F_Q)_A\notag\\
&+\left(\tcuu-(1+\alpha d)\widetilde V\right)\tA-\left(\tfrac{1+\alpha}{2}\tA+\tfrac{\alpha}{2\gamma}\widetilde Q\right)(\tZ+\tW).
\label{eq:nonlinear-bootstrap-A-equation}
\end{align}
By using~\eqref{eq:nonlinear-bootstrap-undifferentiated-reconstruction}, together with the bounds~\eqref{eq:finite-pressure-RH-trace-bound} and~\eqref{eq:nonlinear-bootstrap-pressure-decay}, we obtain
\begin{equation*}
\begin{aligned}
\widetilde V(\mathcal F_V)_A+\alpha\widetilde Q(\mathcal F_Q)_A
={}&\int_1^R\Bigl[\tfrac{1}{2}\bigl((\mathcal F_V)_A-\alpha(\mathcal F_Q)_A\bigr)\tZ(S,\tau)
+(\mathcal F_Q)_A\tA(S,\tau)\\
&\qquad\qquad+\tfrac{1}{2}\bigl((\mathcal F_V)_A+\alpha(\mathcal F_Q)_A\bigr)\tW(S,\tau)\Bigr]\tfrac{\dd S}{S}
+\Ocal(\varepsilon^2e^{-2\lambda\tau}).
\end{aligned}
\end{equation*}
Then, using~\eqref{eq:nonlinear-bootstrap-exterior-assumption} and $\psi\geq1$ on $[1,R_2]$, we get
\begin{equation*}
\begin{aligned}
&\left|\widetilde V(\mathcal F_V)_A+\alpha\widetilde Q(\mathcal F_Q)_A\right|\\
&\quad\leq\log R\left[\tfrac{1}{2}\left|(\mathcal F_V)_A-\alpha(\mathcal F_Q)_A\right|+(1-\alpha)\left|(\mathcal F_Q)_A\right|+\tfrac{11}{20}\left|(\mathcal F_V)_A+\alpha(\mathcal F_Q)_A\right|\right]\varepsilon e^{-\lambda\tau}
+\mathsf{C}\varepsilon^2e^{-2\lambda\tau}\\
&\quad=\log R\,\mathcal J_A(R)\varepsilon e^{-\lambda\tau}+\mathsf{C}\varepsilon^2e^{-2\lambda\tau},
\end{aligned}
\end{equation*}
where the last equality is the definition of $\mathcal J_A$ in~\eqref{eq:profile-joint-Volterra-coefficient}.  By Lemma~\ref{lem:profile-signs} and the choice of $R_2$ following~\eqref{eq:nonlinear-bootstrap-extended-profile-bounds}, we have $\ell_{AZ}=\ell_{AW}>0$ and $\overline{\rA}<0$ throughout $[1,R_2]$.  Applying the same bootstrap bounds to the off-diagonal terms, using~\eqref{eq:nonlinear-bootstrap-cu-improvement} for the modulation term and~\eqref{eq:nonlinear-bootstrap-quadratic-bounds} for the quadratic terms, gives
\begin{align}
\left|\left(\partial_\tau+(\crrb+V)R\partial_R\right)\tA-\ell_{AA}\tA\right|
&\leq\left(\ell_{AZ}+\tfrac{11}{10}\ell_{AW}-\kappa_\gamma\overline{\rA}+\log R\,\mathcal J_A\right)\varepsilon e^{-\lambda\tau}+\mathsf{C}\varepsilon^2e^{-\lambda\tau}\notag\\
&=\mathcal R_A(R)\varepsilon e^{-\lambda\tau}+\mathsf{C}\varepsilon^2e^{-\lambda\tau},
\label{eq:nonlinear-bootstrap-A-source-bound}
\end{align}
\end{subequations}
 where we used the definition of $\mathcal R_A$ from~\eqref{eq:profile-row-A}.  We now show how to use the bound~\eqref{eq:nonlinear-bootstrap-A-source-bound}, the equation for $\rA$ just derived~\eqref{eq:nonlinear-bootstrap-A-equation} and the Rankine--Hugoniot jump condition~\eqref{eq:finite-pressure-RH-solved-feedback-A} in order to improve the $\rA$ bootstrap inequality~\eqref{eq:nonlinear-bootstrap-exterior-assumption}. We will proceed by integrating along the associated characteristic equation $\Phi_A$; since as we observed in~\eqref{eq:nonlinear-bootstrap-inner-A-characteristic} and~\eqref{eq:nonlinear-bootstrap-positive-inner-A-speed} we have $\partial_s\Phi_A>0$, if we follow the associated characteristic for a given $(R,\tau)$ backward in time, there are two possible cases
 \begin{enumerate}
     \item there exists $\tau'\in[0,\tau]$ such that $\Phi_A(1,\tau';\tau)=R$,
     \item there exists $R'>1$ such that $\Phi_A(R',0;\tau)=R$.
 \end{enumerate}
We analyze each case separately.  

\smallskip \noindent\emph{Case 1: the characteristic meets the shock.}\par Assume that there exists $\tau'\in[0,\tau]$ such that
\begin{equation*}
\Phi_A(R,\tau;\tau')=1,\qquad\text{or equivalently}\qquad\Phi_A(1,\tau';\tau)=R.
\end{equation*}
By uniqueness of the characteristic equation, $\Phi_A(R,\tau;s)=\Phi_A(1,\tau';s)$ for $\tau'\leq s\leq\tau$.  We therefore integrate $s\mapsto\Phi_A(1,\tau';s)$ forward from the shock at time $\tau'$ to $(R,\tau)$.  The Rankine--Hugoniot condition  \eqref{eq:finite-pressure-RH-solved-feedback-A}, the bound~\eqref{eq:nonlinear-bootstrap-pressure-decay}, and the bootstrap bound for $\tZ$ \eqref{eq:nonlinear-bootstrap-exterior-assumption} give
\begin{equation*}
e^{\lambda\tau'} |\tA(1^+,\tau')| \leq \varepsilon\left(b_A(\gamma)+\mathsf{C}\varepsilon\right).
\end{equation*}
Integrating~\eqref{eq:nonlinear-bootstrap-A-equation} along the $\Phi_A$ characteristic we get
\begin{equation}
\begin{aligned}
&e^{\lambda\tau}|\tA(R,\tau)| \leq \varepsilon\left(b_A(\gamma)+\mathsf{C}\varepsilon\right)e^{\int_{\tau'}^\tau\left(\ell_{AA}(\Phi_A(1,\tau';s))+\lambda\right)\,\dd s}\\
&\qquad+\varepsilon\int_{\tau'}^\tau e^{\int_s^\tau\left(\ell_{AA}(\Phi_A(1,\tau';r))+\lambda\right)\,\dd r}\left(\mathcal R_A(\Phi_A(1,\tau';s))+\mathsf{C}\varepsilon\right)\,\dd s.
\label{eq:nonlinear-bootstrap-A-shock-case}
\end{aligned}
\end{equation}
To identify $G_A(R)$ in this expression, we will execute a change of variables.  First we observe that the characteristic equation~\eqref{eq:nonlinear-bootstrap-inner-A-characteristic} and the lower bound~\eqref{eq:nonlinear-bootstrap-positive-inner-A-speed} yield
\begin{equation*}
0\leq\tau-\tau'\leq\tfrac{2(1+\alpha)}{\alpha\crrb}\log R\leq\tfrac{2(1+\alpha)}{\alpha\crrb}\log R_2.
\end{equation*}
Thus the time interval of integration is bounded independently of $\tau$ and $\tau'$.  Moreover, $\LA\geq\frac{\alpha\crrb}{1+\alpha}$ and~\eqref{eq:nonlinear-bootstrap-inner-VQ-bound} imply
\begin{subequations}
\begin{align}
\left\|\tfrac{1}{\LA+\widetilde V}-\tfrac{1}{\LA}\right\|_{L^\infty([1,R_2])}
&=\left\|\tfrac{\widetilde V}{\LA(\LA+\widetilde V)}\right\|_{L^\infty([1,R_2])}\leq \mathsf{C}\varepsilon,
\label{eq:nonlinear-bootstrap-A-inverse-speed-error}
\end{align}
\end{subequations}
In particular, the estimate~\eqref{eq:nonlinear-bootstrap-A-inverse-speed-error} and the change of variables $\Theta=\Phi_A(1,\tau';r)$ (observe that $\Phi_A(1, \tau'; r)$ is increasing in $r$) give, for every $s\in[\tau',\tau]$,
\begin{equation*}
\begin{aligned}
&\left|\int_s^\tau\bigl(\ell_{AA}(\Phi_A(1,\tau';r))+\lambda\bigr)\,\dd r-\int_{\Phi_A(1,\tau';s)}^R\tfrac{\ell_{AA}(\Theta)}{\Theta\LA(\Theta)}\,\dd\Theta\right|\leq \mathsf{C}(\lambda+\varepsilon)\log\tfrac{R}{\Phi_A(1,\tau';s)}\leq \mathsf{C}(\lambda+\varepsilon),\\
&\left|\int_{\tau'}^\tau e^{\int_s^\tau(\ell_{AA}(\Phi_A(1,\tau';r))+\lambda)\,\dd r}\mathcal R_A(\Phi_A(1,\tau';s))\,\dd s-\int_1^R\tfrac{\mathcal R_A(\Theta)}{\Theta\LA(\Theta)}e^{\int_\Theta^R\frac{\ell_{AA}(\Xi)}{\Xi\LA(\Xi)}\,\dd\Xi}\,\dd\Theta\right|\leq \mathsf{C}(\lambda+\varepsilon).
\end{aligned}
\end{equation*}
Consequently, from~\eqref{eq:nonlinear-bootstrap-A-shock-case} we obtain
\begin{equation*}
\begin{aligned}
e^{\lambda\tau}\!|\tA(R,\tau)|&\leq \varepsilon\left[b_A(\gamma)e^{\int_1^R\frac{\ell_{AA}(\Xi)}{\Xi\LA(\Xi)}\,\dd\Xi}\!+\!\int_1^R\tfrac{\mathcal R_A(S)}{S\LA(S)}e^{\int_S^R\frac{\ell_{AA}(\Xi)}{\Xi\LA(\Xi)}\,\dd\Xi}\,\dd S\!+\!\mathsf{C}(\lambda\!+\!\varepsilon)\right]\\
&=\varepsilon\left(G_A(R)\!+\!\mathsf{C}(\lambda\!+\!\varepsilon)\right).
\end{aligned}
\end{equation*}
Then, applying the bound~\eqref{eq:nonlinear-bootstrap-extended-profile-bounds} and choosing $\lambda$ and $\varepsilon$ small enough we obtain
\begin{equation*}
|\tA(R,\tau)| \leq \left(G_A(R)+\mathsf{C}\left(\lambda+\varepsilon\right)\right) \varepsilon e^{-\lambda\tau} \le \mathsf{c} (1-\alpha)\varepsilon e^{-\lambda\tau}.
\end{equation*}

\smallskip \noindent\emph{Case 2: the characteristic does not meet the shock.}\par Assume that there is no $\tau'\in[0,\tau]$ such that $\Phi_A(R,\tau;\tau')=1$.  Then $R'=\Phi_A(R,\tau;0)>1$ and $\Phi_A(R',0;\tau)=R$.  We integrate $s\mapsto\Phi_A(R',0;s)$ forward from $(R',0)$ to $(R,\tau)$. Then, the argument proceeds exactly as in the previous case. The only difference is that the initial data here are given at time $\tau=0$ and satisfy the bound~\eqref{eq:nonlinear-bootstrap-initial-Z-bound}. Repeating the argument verbatim, we obtain 
\begin{equation*}
\begin{aligned}
e^{\lambda\tau}|\tA(R,\tau)|
&\leq\varepsilon^2e^{\int_{R'}^R\frac{\ell_{AA}(\Xi)}{\Xi\LA(\Xi)}\,\dd\Xi}+\varepsilon\int_{R'}^R\tfrac{\mathcal R_A(S)}{S\LA(S)}e^{\int_S^R\frac{\ell_{AA}(\Xi)}{\Xi\LA(\Xi)}\,\dd\Xi}\,\dd S+\mathsf{C}\varepsilon(\lambda+\varepsilon)\notag\\
&\leq\varepsilon\left(G_A(R)+\mathsf{C}(\lambda+\varepsilon)\right).
\end{aligned}
\end{equation*}
Then, applying the bound~\eqref{eq:nonlinear-bootstrap-extended-profile-bounds} and choosing $\lambda$ and $\varepsilon$ small enough we obtain
\begin{equation}
|\tA(R,\tau)|\leq\left(G_A(R)+\mathsf{C}(\lambda+\varepsilon)\right)\varepsilon e^{-\lambda\tau}\leq\mathsf c(1-\alpha)\varepsilon e^{-\lambda\tau}.
\label{eq:nonlinear-bootstrap-A-improvement-preliminary}
\end{equation}

\subsubsection{The $\rW$-component}

Improving the bootstrap for $\rW$ follows the same strategy as for $\rA$. We start by writing the equations obeyed by $\rW$; taking the $\rW$-row of~\eqref{eq:nonlinear-bootstrap-perturbation-system} and using~\eqref{eq:ring-constant-matrices} and~\eqref{eq:ring-quadratic-form}, we obtain
\begin{subequations}
\begin{align}
\left(\partial_\tau+(\crrb+V+\alpha Q)R\partial_R\right)\tW
={}&\ell_{WW}\tW+\ell_{WZ}\tZ+\ell_{WA}\tA+\tcuu\overline{\rW}+\widetilde V(\mathcal F_V)_W+\alpha\widetilde Q(\mathcal F_Q)_W\notag\\
&+\left(\tcuu-\tfrac{3+\alpha d}{2}\widetilde V-\tfrac{\alpha(d+2)}{2}\widetilde Q\right)\tW-\left(\tfrac{1-\alpha d}{2}\widetilde V+\tfrac{\alpha((d+2)+2\alpha(d-2))}{2\gamma}\widetilde Q\right)\tZ\notag\\
&-\tfrac{4\alpha}{\gamma}\widetilde Q\tA-\left(\tfrac{1-\alpha}{2}\tZ+\tfrac{1+\alpha}{2}\tW\right)\tW+\tfrac{1}{2}\tA(\tZ-\tW).
\label{eq:nonlinear-bootstrap-W-equation}
\end{align}
By using~\eqref{eq:nonlinear-bootstrap-undifferentiated-reconstruction}, together with the bounds~\eqref{eq:finite-pressure-RH-trace-bound} and~\eqref{eq:nonlinear-bootstrap-pressure-decay}, we obtain
\begin{equation*}
\begin{aligned}
\widetilde V(\mathcal F_V)_W+\alpha\widetilde Q(\mathcal F_Q)_W
={}&\int_1^R\Bigl[\tfrac{1}{2}\bigl((\mathcal F_V)_W-\alpha(\mathcal F_Q)_W\bigr)\tZ(S,\tau)
+(\mathcal F_Q)_W\tA(S,\tau)\\
&\qquad\qquad+\tfrac{1}{2}\bigl((\mathcal F_V)_W+\alpha(\mathcal F_Q)_W\bigr)\tW(S,\tau)\Bigr]\tfrac{\dd S}{S}
+\Ocal(\varepsilon^2e^{-2\lambda\tau}).
\end{aligned}
\end{equation*}
Then, using~\eqref{eq:nonlinear-bootstrap-exterior-assumption} and $\psi\geq1$ on $[1,R_2]$, we get
\begin{equation*}
\begin{aligned}
&\left|\widetilde V(\mathcal F_V)_W+\alpha\widetilde Q(\mathcal F_Q)_W\right|\\
&\quad\leq\log R\left[\tfrac{1}{2}\left|(\mathcal F_V)_W-\alpha(\mathcal F_Q)_W\right|+(1-\alpha)\left|(\mathcal F_Q)_W\right|+\tfrac{11}{20}\left|(\mathcal F_V)_W+\alpha(\mathcal F_Q)_W\right|\right]\varepsilon e^{-\lambda\tau}\\
&\qquad+\mathsf{C}\varepsilon^2e^{-2\lambda\tau}\\
&\quad=\log R\,\mathcal J_W(R)\varepsilon e^{-\lambda\tau}+\mathsf{C}\varepsilon^2e^{-2\lambda\tau},
\end{aligned}
\end{equation*}
where the last equality is the definition of $\mathcal J_W$ in~\eqref{eq:profile-joint-Volterra-coefficient}. By Lemma~\ref{lem:profile-signs} and the choice of $R_2$ following~\eqref{eq:nonlinear-bootstrap-extended-profile-bounds}, we have $\ell_{WZ}<0$, $\ell_{WA}<0$, and $\overline{\rW}>0$ throughout $[1,R_2]$. Applying the same bootstrap bounds to the off-diagonal terms, using~\eqref{eq:nonlinear-bootstrap-cu-improvement} for the modulation term and~\eqref{eq:nonlinear-bootstrap-quadratic-bounds} for the quadratic terms, gives
\begin{align}
&\left|\left(\partial_\tau+(\crrb+V+\alpha Q)R\partial_R\right)\tW-\ell_{WW}\tW\right|
\leq\left(-\ell_{WZ}-(1-\alpha)\ell_{WA}+\kappa_\gamma\overline{\rW}+\log R\,\mathcal J_W\right)\varepsilon e^{-\lambda\tau}\notag\\
&\qquad+\mathsf{C}\varepsilon^2e^{-\lambda\tau}=\mathcal R_W(R)\varepsilon e^{-\lambda\tau}+\mathsf{C}\varepsilon^2e^{-\lambda\tau},
\label{eq:nonlinear-bootstrap-W-source-bound}
\end{align}
\end{subequations}
where we used the definition of $\mathcal R_W$ from~\eqref{eq:profile-row-W}. Here Lemma~\ref{lem:profile-signs} gives $\mathcal R_W>0$, and $-b_W(\gamma)>0$, so $G_W>0$ on $[1,R_2]$. We again distinguish two cases.

\smallskip \noindent\emph{Case 1: the characteristic meets the shock.}\par Assume that $\Phi_W(1,\tau';\tau)=R$ for some $\tau'\in[0,\tau]$. The argument is the same as in Case 1 for $\rA$: we integrate~\eqref{eq:nonlinear-bootstrap-W-equation} using~\eqref{eq:nonlinear-bootstrap-W-source-bound}, the boundary condition~\eqref{eq:finite-pressure-RH-solved-feedback-W}, and~\eqref{eq:nonlinear-bootstrap-pressure-decay}, and then make the same change of variables. Recalling~\eqref{eq:profile-gain-W-formula}, we obtain
\begin{equation*}
\begin{aligned}
e^{\lambda\tau}|\tW(R,\tau)|
&\leq\varepsilon\left[-b_W(\gamma)e^{\int_1^R\frac{\ell_{WW}(\Xi)}{\Xi\LW(\Xi)}\,\dd\Xi}+\int_1^R\tfrac{\mathcal R_W(S)}{S\LW(S)}e^{\int_S^R\frac{\ell_{WW}(\Xi)}{\Xi\LW(\Xi)}\,\dd\Xi}\,\dd S+\mathsf{C}(\lambda+\varepsilon)\right]\notag\\
&=\varepsilon\left(G_W(R)+\mathsf{C}(\lambda+\varepsilon)\right).
\end{aligned}
\end{equation*}
For $\lambda$ and $\varepsilon$ sufficiently small,~\eqref{eq:nonlinear-bootstrap-extended-profile-bounds} then gives
\begin{equation*}
|\tW(R,\tau)|\leq\left(G_W(R)+\mathsf{C}(\lambda+\varepsilon)\right)\varepsilon e^{-\lambda\tau}\leq\tfrac{11}{10}\mathsf c\varepsilon e^{-\lambda\tau}.
\end{equation*}

\smallskip \noindent\emph{Case 2: the characteristic does not meet the shock.}\par If no such $\tau'$ exists, let $R'=\Phi_W(R,\tau;0)>1$, so that $\Phi_W(R',0;\tau)=R$. We use the same argument as in Case 2 for $\rA$, with the initial bound~\eqref{eq:nonlinear-bootstrap-initial-sharp-ring-bound}. Since $\mathcal R_W>0$ and $-b_W(\gamma)>0$, the comparison with~\eqref{eq:profile-gain-W-formula} yields
\begin{equation*}
\begin{aligned}
e^{\lambda\tau}|\tW(R,\tau)|
&\leq\varepsilon^2e^{\int_{R'}^R\frac{\ell_{WW}(\Xi)}{\Xi\LW(\Xi)}\,\dd\Xi}+\varepsilon\int_{R'}^R\tfrac{\mathcal R_W(S)}{S\LW(S)}e^{\int_S^R\frac{\ell_{WW}(\Xi)}{\Xi\LW(\Xi)}\,\dd\Xi}\,\dd S+\mathsf{C}\varepsilon(\lambda+\varepsilon)\notag\\
&\leq\varepsilon\left(G_W(R)+\mathsf{C}(\lambda+\varepsilon)\right).
\end{aligned}
\end{equation*}
Using~\eqref{eq:nonlinear-bootstrap-extended-profile-bounds} and the same choice of $\lambda$ and $\varepsilon$, we conclude
\begin{equation}
|\tW(R,\tau)|\leq\left(G_W(R)+\mathsf{C}(\lambda+\varepsilon)\right)\varepsilon e^{-\lambda\tau}\leq\tfrac{11}{10}\mathsf c\varepsilon e^{-\lambda\tau}.
\label{eq:nonlinear-bootstrap-W-improvement-preliminary}
\end{equation}

\subsubsection{The $\rZ$-component}

The $Z$-characteristic has a different geometry compared to the previous two characteristics.  By~\eqref{eq:guderley:first-characteristic-monotonicity}, \eqref{eq:guderley:unique-sonic-point}, and $R_2>\Rs$, one has $\LZ(1^+)<0<\LZ(R_2)$.  Since~\eqref{eq:nonlinear-bootstrap-inner-VQ-bound} gives
\begin{equation*}
\left|
\bigl(\crrb+V(R,s)-\alpha Q(R,s)\bigr)-\LZ(R)
\right|
\leq \mathsf{C}\varepsilon e^{-\lambda s},
\qquad 1\leq R\leq R_2,
\end{equation*}
by taking $\varepsilon$ small enough we have
\begin{equation}
\crrb+V(1^+,s)-\alpha Q(1^+,s)<0,
\qquad
\crrb+V(R_2,s)-\alpha Q(R_2,s)>0.
\label{eq:nonlinear-bootstrap-Z-boundary-directions}
\end{equation}
Thus the characteristic $\Phi_Z$ is outgoing both at the shock at $R=1$ and at $R=R_2$.

Consequently, for every $R\in[1,R_2]$ and $0<\tau\leq T$, the backward characteristic remains in $[1,R_2]$ until time $0$: this implies there exists $R'\in(1,R_2)$ such that
\begin{equation}
R'=\Phi_Z(R,\tau;0) \text{ or equivalently } \Phi_Z(R',0;\tau)=R.
\label{eq:nonlinear-bootstrap-Z-initial-characteristic}
\end{equation}
We therefore integrate $s\mapsto\Phi_Z(R',0;s)$ forward from $(R',0)$ to $(R,\tau)$.

We first write the $\rZ$-row of~\eqref{eq:nonlinear-bootstrap-perturbation-system}. Using~\eqref{eq:ring-constant-matrices} and~\eqref{eq:ring-quadratic-form}, we obtain
\begin{align}
\left(\partial_\tau+(\crrb+V-\alpha Q)R\partial_R\right)\tZ
={}&\ell_{ZZ}\tZ+\ell_{ZA}\tA+\ell_{ZW}\tW+\tcuu\overline{\rZ}+\widetilde V(\mathcal F_V)_Z+\alpha\widetilde Q(\mathcal F_Q)_Z\notag\\
&+\left(\tcuu-\tfrac{3+\alpha d}{2}\widetilde V+\tfrac{\alpha(d+2)}{2}\widetilde Q\right)\tZ-\left(\tfrac{1-\alpha d}{2}\widetilde V-\tfrac{\alpha((d+2)+2\alpha(d-2))}{2\gamma}\widetilde Q\right)\tW\notag\\
&-\tfrac{4\alpha}{\gamma}\widetilde Q\tA-\left(\tfrac{1+\alpha}{2}\tZ+\tfrac{1-\alpha}{2}\tW\right)\tZ+\tfrac{1}{2}\tA(\tZ-\tW).
\label{eq:nonlinear-bootstrap-Z-equation}
\end{align}
To integrate~\eqref{eq:nonlinear-bootstrap-Z-equation}, we use the bootstrap bounds~\eqref{eq:nonlinear-bootstrap-exterior-assumption} and the definition of $\Psi_Z$ in~\eqref{eq:profile-row-Z}. The reconstruction formulas~\eqref{eq:nonlinear-bootstrap-undifferentiated-reconstruction}, the modulation identity~\eqref{eq:nonlinear-bootstrap-perturbed-shock-feedback}, the bounds~\eqref{eq:nonlinear-bootstrap-pressure-decay} and~\eqref{eq:nonlinear-bootstrap-quadratic-bounds}, together with Lemma~\ref{lem:profile-signs}, give
\begin{equation}
\left(\partial_\tau+(\crrb+V-\alpha Q)R\partial_R\right)\left(e^{\lambda\tau}|\tZ(R,\tau)|\right)\leq(\ell_{ZZ}+\lambda)e^{\lambda\tau}|\tZ(R,\tau)|+(\Psi_Z-\ell_{ZZ})\varepsilon+\mathsf{C}\varepsilon^2e^{-\lambda\tau}.
\label{eq:nonlinear-bootstrap-Z-first-contact}
\end{equation}
We have $\ell_{ZZ}<0$ and $\ell_{ZZ}+\lambda<-3\lambda$ by Lemma~\ref{lem:profile-signs}, \eqref{eq:profile-row-Z}, and~\eqref{eq:nonlinear-bootstrap-extended-profile-bounds}. Since $\sup_{[1,R_2]}\Psi_Z<-4\lambda$ and $\ell_{ZZ}$ is bounded on $[1,R_2]$, we choose $\mathsf c<1$ sufficiently close to $1$ and then $\varepsilon$ sufficiently small so that, for $R\in[1,R_2]$ and $\tau\geq0$,
\begin{equation*}
0\leq\Psi_Z-\ell_{ZZ}+\mathsf{C}\varepsilon e^{-\lambda\tau}\leq-\mathsf c(\ell_{ZZ}+\lambda).
\end{equation*}
We integrate~\eqref{eq:nonlinear-bootstrap-Z-first-contact} along the characteristic $\Phi_Z(R',0;s)$ from~\eqref{eq:nonlinear-bootstrap-Z-initial-characteristic}. Using the preceding bound and the bound on the initial data~\eqref{eq:nonlinear-bootstrap-initial-Z-bound}, we obtain
\begin{equation*}
e^{\lambda\tau}|\tZ(R,\tau)|\leq\varepsilon^2e^{\int_0^\tau(\ell_{ZZ}(\Phi_Z(R',0;s))+\lambda)\,\dd s}+\mathsf c\varepsilon\left(1-e^{\int_0^\tau(\ell_{ZZ}(\Phi_Z(R',0;s))+\lambda)\,\dd s}\right)\leq\mathsf c\varepsilon,
\end{equation*}
where the last inequality follows from $0<e^{\int_0^\tau(\ell_{ZZ}(\Phi_Z(R',0;s))+\lambda)\,\dd s}\leq1$ and $\varepsilon\leq\mathsf c$. Hence
\begin{equation}
\|\tZ(\tau)\|_{L^\infty([1,R_2])}\leq\mathsf c\varepsilon e^{-\lambda\tau}.
\label{eq:nonlinear-bootstrap-Z-improvement}
\end{equation}
Estimates~\eqref{eq:nonlinear-bootstrap-A-improvement-preliminary}, \eqref{eq:nonlinear-bootstrap-W-improvement-preliminary}, and~\eqref{eq:nonlinear-bootstrap-Z-improvement}, combined with $\psi\leq\psi(R_1)$ on $[1,R_2]$ and~\eqref{eq:nonlinear-bootstrap-exterior-weight}, give the improved weighted bootstrap bounds
\begin{equation*}
\max\left\{\|\psi\tZ(\tau)\|_{L^\infty([1,R_2])},\tfrac{1}{1-\alpha}\|\psi\tA(\tau)\|_{L^\infty([1,R_2])},\tfrac{10}{11}\|\psi\tW(\tau)\|_{L^\infty([1,R_2])}\right\}
<\tfrac{1+\mathsf c}{2}\varepsilon e^{-\lambda\tau}.
\end{equation*}

\subsection{Improving the bootstrap for the differentiated variables in the outgoing region $R>R_2$.}
We start by observing that the estimates~\eqref{eq:nonlinear-bootstrap-A-improvement-preliminary}, \eqref{eq:nonlinear-bootstrap-W-improvement-preliminary}, and~\eqref{eq:nonlinear-bootstrap-Z-improvement} give us the bound
\begin{equation*}
e^{\lambda\tau}\max\left\{|\tZ|,\tfrac{1}{1-\alpha}|\tA|,\tfrac{10}{11}|\tW|\right\}(R_2,\tau)\leq\mathsf c\varepsilon.
\end{equation*}
Together with the modulation estimate~\eqref{eq:nonlinear-bootstrap-pressure-and-cu-control}, these inner estimates also give
\begin{equation*}
\|\Ytil(\tau)\|_{L^\infty([1,R_2])}+|\tcuu(\tau)|+|\tcbb(\tau)|\leq\mathsf{C}\varepsilon e^{-\lambda\tau}.
\end{equation*}
Before showing the weighted decay in the region $R\geq R_2$, we remind the reader that we have a positive lower bound for all three characteristic speeds on $R \ge R_2$.  Moreover, from~\eqref{eq:guderley:first-characteristic-monotonicity} and~\eqref{eq:guderley:unique-sonic-point}, setting $\delta:=\frac{1}{2}\LZ(R_2)>0$ we have
\begin{equation}
\inf_{R\geq R_2}\min\{\LZ(R),\LA(R),\LW(R)\}=\LZ(R_2)=2\delta>0.
\label{eq:nonlinear-bootstrap-background-outgoing-speeds}
\end{equation}
By~\eqref{eq:nonlinear-bootstrap-global-VQ-bound}, taking $\varepsilon$ sufficiently small therefore gives
\begin{equation}
\crrb+V-\alpha Q\geq\delta,
\qquad
\crrb+V\geq\delta,
\qquad
\crrb+V+\alpha Q\geq\delta,
\qquad R\geq R_2.
\label{eq:nonlinear-bootstrap-nonlinear-outgoing-speeds}
\end{equation}
We now treat the $[R_2, R_3]$ region. 

\subsubsection{Improving the bootstrap on $[R_2, R_3]$.}
We multiply~\eqref{eq:ring-perturbation-system} by $\psi$ and use the bilinearity of $\Ncal$ to obtain
\begin{equation}
\begin{aligned}
&\partial_\tau(\psi\Ytil)+\Tmat R\partial_R(\psi\Ytil)
+\bigl(\Dmat-\Tmat R\partial_R\log\psi\bigr)(\psi\Ytil)\\
&\qquad=-\psi\widetilde{\Tmat} R\partial_R\Ybar-\psi\widetilde{\Dmat}\Ybar
-\Ncal(\Ybar,\psi\Ytil)-\Ncal(\psi\Ytil,\Ybar)-\Ncal(\Ytil,\psi\Ytil).
\label{eq:nonlinear-bootstrap-weighted-perturbation-system}
\end{aligned}
\end{equation}
On the compact interval $[R_2,R_3]$, we have the bound
\begin{equation*}
\begin{aligned}
\left|\mathsf{RHS}_{\eqref{eq:nonlinear-bootstrap-weighted-perturbation-system}}\right|
&\leq\mathsf{M}(1+\log R)R^{-\frac{1}{\crrb}}\varepsilon e^{-\lambda\tau}
+\mathsf{C}\varepsilon^2e^{-2\lambda\tau},
\qquad R_2\leq R\leq R_3.
\end{aligned}
\end{equation*}
Indeed, since $\psi$ is nonincreasing on $[1,R_3]$ and $\psi(R_2)=1$, we have $\psi(R)\leq1$ and $\psi(R)/\psi(S)\leq1$ for $1\leq S\leq R$ and $R_2\leq R\leq R_3$. The reconstruction formulas~\eqref{eq:nonlinear-bootstrap-undifferentiated-reconstruction}, the shock-trace bound~\eqref{eq:nonlinear-bootstrap-undifferentiated-shock-traces}, and the global bootstrap~\eqref{eq:nonlinear-bootstrap-exterior-assumption} therefore give
\begin{equation*}
\begin{aligned}
\psi(R)\bigl(|\widetilde V(R,\tau)|+\alpha|\widetilde Q(R,\tau)|\bigr)
&\leq\mathsf{C}\varepsilon^2e^{-2\lambda\tau}
+\mathsf{C}\varepsilon e^{-\lambda\tau}\int_1^R\tfrac{\psi(R)}{\psi(S)}\tfrac{\dd S}{S}\\
&\leq\mathsf{C}\varepsilon^2e^{-2\lambda\tau}
+\mathsf{C}\varepsilon(\log R)e^{-\lambda\tau}.
\end{aligned}
\end{equation*}
The modulation estimate~\eqref{eq:nonlinear-bootstrap-cu-improvement} also gives $\psi(R)|\tcuu(\tau)|\leq\mathsf{C}\varepsilon e^{-\lambda\tau}$, since $\psi(R)\leq1$ on $[R_2,R_3]$. Together with the boundedness of the profiles and bilinearity of $\Ncal$, the preceding bounds control the linear terms on the right-hand side of~\eqref{eq:nonlinear-bootstrap-weighted-perturbation-system}. The quadratic term is bounded by $\mathsf{C}\varepsilon^2e^{-2\lambda\tau}$ using~\eqref{eq:nonlinear-bootstrap-exterior-assumption} and $\inf\psi=\psi(R_3)>0$.

We now use the damping supplied by the weight. Since all three characteristic speeds are positive on $[R_2,R_3]$, choosing $\psi$ sufficiently decreasing as in~\eqref{eq:nonlinear-bootstrap-weight-damping}, and multiplying by $e^{\lambda\tau}$, gives
\begin{equation*}
\begin{aligned}
\min_{1\leq i\leq3}\left\{(\Dmat)_{ii}-(\Tmat)_{ii}R\partial_R\log\psi\right\}-\lambda
&\geq\tfrac12+2\mathsf{M}(1+\log R)R^{-\frac{1}{\crrb}}-\lambda-\mathsf{C}\varepsilon\\
&\geq\tfrac14+2\mathsf{M}(1+\log R)R^{-\frac{1}{\crrb}},
\qquad R_2\leq R\leq R_3.
\end{aligned}
\end{equation*}
The preceding source and damping bounds show that, for sufficiently small $\varepsilon$, the diagonal terms dominate the remaining terms. We therefore argue as in the proof of~\eqref{eq:nonlinear-bootstrap-Z-improvement}, integrating along the characteristics~\eqref{eq:nonlinear-bootstrap-inner-characteristics} from $\tau=0$ or $R=R_2$. This yields the improved bootstrap bound
\begin{equation}
\max\left\{\|\psi\tZ(\tau)\|_{L^\infty([R_2,R_3])},\tfrac{1}{1-\alpha}\|\psi\tA(\tau)\|_{L^\infty([R_2,R_3])},\tfrac{10}{11}\|\psi\tW(\tau)\|_{L^\infty([R_2,R_3])}\right\}
\leq\tfrac{1+\mathsf c}{2}\varepsilon e^{-\lambda\tau}.
\label{eq:nonlinear-bootstrap-compact-exterior-bound}
\end{equation}

\subsubsection{Improving the bootstrap in the far-field region $R \ge R_3$.}

For $R\geq R_3$, we use the same weighted equation~\eqref{eq:nonlinear-bootstrap-weighted-perturbation-system}, now with the damping supplied by the decay of the profiles. The decay in~\eqref{eq:guderley-profile-decay}, the stationary equations, and the definitions of the differentiated variables imply
\begin{equation}
\begin{aligned}
&\Ybar,\ R\partial_R\Ybar,\ \mathcal F_V,\ \mathcal F_Q
=\mathcal O\left(R^{-\frac{1}{\crrb}}\right),\\
&\overline{\Dmat}=I+\mathcal O\left(R^{-\frac{1}{\crrb}}\right),
\qquad
\overline{\Tmat}=\crrb I+\mathcal O\left(R^{-\frac{1}{\crrb}}\right).
\label{eq:nonlinear-bootstrap-far-field-coefficients}
\end{aligned}
\end{equation}
By~\eqref{eq:nonlinear-bootstrap-exterior-weight-derivative}, the diagonal entries of $\overline{\Dmat}-\overline{\Tmat}R\partial_R\log\psi$ converge to $\frac12$ at infinity. In particular, using the properties of $\psi$ in~\eqref{eq:nonlinear-bootstrap-exterior-weight-derivative} we can obtain the inequality
\begin{equation}
\min_{1\leq i\leq3}\left\{(\overline{\Dmat})_{ii}-(\overline{\Tmat})_{ii}R\partial_R\log\psi\right\}\geq\tfrac18,
\qquad R\geq R_3.
\label{eq:nonlinear-bootstrap-far-field-row-bounds}
\end{equation}
The reconstruction formulas~\eqref{eq:nonlinear-bootstrap-undifferentiated-reconstruction}, the shock-trace bound~\eqref{eq:nonlinear-bootstrap-undifferentiated-shock-traces}, and the modulation estimate~\eqref{eq:nonlinear-bootstrap-cu-improvement} give
\begin{equation*}
e^{\lambda\tau}\left(|\tcuu(\tau)|+|\widetilde V(R,\tau)|+\alpha|\widetilde Q(R,\tau)|\right)
\leq\mathsf{C}\varepsilon\left(1+\int_1^R\tfrac{1}{\psi(S)}\tfrac{\dd S}{S}\right).
\end{equation*}
By the properties of the weight~\eqref{eq:nonlinear-bootstrap-exterior-weight}--\eqref{eq:nonlinear-bootstrap-exterior-weight-derivative},
\begin{equation*}
\sup_{R\geq R_3}\psi(R)R^{-\frac{1}{\crrb}}\leq\mathsf{C}\psi(R_3)R_3^{-\frac{1}{\crrb}},
\qquad
\int_1^\infty\tfrac{1}{\psi(S)}\tfrac{\dd S}{S}\leq\tfrac{\log R_3+\mathsf{C}}{\psi(R_3)}.
\end{equation*}
Together with~\eqref{eq:nonlinear-bootstrap-far-field-coefficients} and the bootstrap~\eqref{eq:nonlinear-bootstrap-exterior-assumption}, this gives
\begin{equation*}
\sup_{R\geq R_3}e^{\lambda\tau}\left|\mathsf{RHS}_{\eqref{eq:nonlinear-bootstrap-weighted-perturbation-system}}\right|
\leq\mathsf{M}(1+\log R_3)R_3^{-\frac{1}{\crrb}}\varepsilon+\mathsf{C}\varepsilon^2.
\end{equation*}
In particular, if $R_3$ is large enough we can dominate $\mathsf{RHS}_{\eqref{eq:nonlinear-bootstrap-weighted-perturbation-system}}$ using the diagonal terms~\eqref{eq:nonlinear-bootstrap-far-field-row-bounds}. We repeat the characteristic argument on $[R_3,\infty)$, integrating from $\tau=0$ or $R=R_3$ and using~\eqref{eq:nonlinear-bootstrap-compact-exterior-bound} at $R=R_3$. Combining this estimate with the bounds already proved on $[1,R_3]$, we obtain
\begin{equation}
\max\left\{
\|\psi\tZ(\tau)\|_{L^\infty([1,\infty))},
\tfrac{1}{1-\alpha}\|\psi\tA(\tau)\|_{L^\infty([1,\infty))},
\tfrac{10}{11}\|\psi\tW(\tau)\|_{L^\infty([1,\infty))}
\right\}
\leq\tfrac{1+\mathsf c}{2}\varepsilon e^{-\lambda\tau}.
\label{eq:nonlinear-bootstrap-exterior-improvement}
\end{equation}
This proves~\eqref{eq:nonlinear-bootstrap-exterior-target}.

\subsection{Weighted bounds for the undifferentiated variables}

We conclude by proving bounds for the undifferentiated variables, deducing them from the bounds for the differentiated variables.

\emph{1. Reconstruction from infinity.} By~\eqref{eq:nonlinear-bootstrap-exterior-improvement} and~\eqref{eq:nonlinear-bootstrap-exterior-weight-derivative}, $\|R^{\frac{1}{2\crrb}}\Ytil(\tau)\|_{L^\infty}\leq\mathsf{C}\varepsilon e^{-\lambda\tau}$. Using~\eqref{eq:nonlinear-bootstrap-global-VQ-bound} and the preceding differentiated bound, the equations~\eqref{eq:Q-eq}--\eqref{eq:V-eq} and Gr\"onwall's inequality at fixed $R$ propagate the initial vanishing at infinity; since the background also vanishes there, $\widetilde V,\widetilde Q\to0$ as $R\to\infty$. Integrating~\eqref{eq:ring-inverse} from infinity then gives
\begin{subequations}
\label{eq:nonlinear-bootstrap-tail-reconstruction}
\begin{align}
R^{\frac{1}{2\crrb}}|\widetilde V(R,\tau)|&\leq\tfrac12 R^{\frac{1}{2\crrb}}\int_R^\infty|\tZ+\tW|(S,\tau)\tfrac{\dd S}{S}\leq\mathsf{C}\left\|R^{\frac{1}{2\crrb}}\Ytil(\tau)\right\|_{L^\infty},
\label{eq:nonlinear-bootstrap-V-tail-reconstruction}\\
\alpha R^{\frac{1}{2\crrb}}|\widetilde Q(R,\tau)|&\leq R^{\frac{1}{2\crrb}}\int_R^\infty\left|-\tfrac\alpha2\tZ+\tA+\tfrac\alpha2\tW\right|(S,\tau)\tfrac{\dd S}{S}\leq\mathsf{C}\left\|R^{\frac{1}{2\crrb}}\Ytil(\tau)\right\|_{L^\infty}.
\label{eq:nonlinear-bootstrap-Q-tail-reconstruction}
\end{align}
\end{subequations}
Consequently,
\begin{equation}
\sup_{R\geq1}R^{\frac{1}{2\crrb}}\left(|\Ytil|+|\widetilde V|+\alpha|\widetilde Q|+R|\partial_R\widetilde V|+\alpha R|\partial_R\widetilde Q|\right)(R,\tau)\leq\mathsf{C}\varepsilon e^{-\lambda\tau}.
\label{eq:nonlinear-bootstrap-weighted-VQ-bound}
\end{equation}

\emph{2. Sharp spatial bounds.} By Theorem~\ref{lem:guderley-profile-monotonicity-decay} and the stationary identities~\eqref{eq:guderley:VQ-system-Q} and~\eqref{eq:guderley:B-elimination}, we have
\begin{equation}
\Qbar(R)\simeq R^{-\frac{1}{\crrb}},\qquad \Bbar(R)\simeq R^{1-\frac{1}{\crrb}},\qquad R\geq1.
\label{eq:nonlinear-bootstrap-background-two-sided-bounds}
\end{equation}
We use the cancellation
\begin{equation}
\mathcal L_{\mathrm{bg}}+\tfrac{1}{\crrb}\Tmat
=\mathcal O\left(R^{-\frac{1}{\crrb}}\right)+\tfrac{1}{\crrb}\left(\widetilde V I+\alpha\widetilde Q\diag(-1,0,1)\right).
\label{eq:nonlinear-bootstrap-sharp-weight-cancellation}
\end{equation}
Using~\eqref{eq:nonlinear-bootstrap-sharp-weight-cancellation} and multiplying~\eqref{eq:nonlinear-bootstrap-perturbation-system} by $R^{\frac{1}{\crrb}}$, we obtain, for $R$ large enough,
\begin{equation*}
\partial_\tau\left(R^{\frac{1}{\crrb}}\Ytil\right)+\Tmat R\partial_R\left(R^{\frac{1}{\crrb}}\Ytil\right)=\mathcal O\left(|\Ytil|\right)+\mathcal O\left(\varepsilon e^{-\lambda\tau}\right)\left(1+R^{\frac{1}{\crrb}}|\Ytil|\right).
\end{equation*}
Here we used~\eqref{eq:nonlinear-bootstrap-far-field-coefficients}, \eqref{eq:nonlinear-bootstrap-pressure-and-cu-control}, and~\eqref{eq:nonlinear-bootstrap-weighted-VQ-bound}.

To integrate this equation, we first multiply~\eqref{eq:nonlinear-bootstrap-perturbation-system} by $\min\{R,L\}^{\frac{1}{\crrb}}$, $L\geq R_3$, and integrate from $\tau=0$ or $R=R_2$. The initial bound~\eqref{eq:nonlinear-bootstrap-initial-sharp-ring-bound}, the inner estimate, \eqref{eq:nonlinear-bootstrap-far-field-coefficients}, \eqref{eq:nonlinear-bootstrap-pressure-and-cu-control}, and~\eqref{eq:nonlinear-bootstrap-weighted-VQ-bound} give
\begin{equation*}
\left\|\min\{R,L\}^{\frac{1}{\crrb}}\Ytil(\tau)\right\|_{L^\infty([R_2,\infty))}\leq\mathsf{C}\varepsilon+\mathsf{C}\varepsilon\int_0^\tau e^{-\lambda s}\left(1+\left\|\min\{R,L\}^{\frac{1}{\crrb}}\Ytil(s)\right\|_{L^\infty([R_2,\infty))}\right)\,\dd s.
\end{equation*}
Gr\"onwall's inequality in time, $L\to\infty$, and~\eqref{eq:nonlinear-bootstrap-tail-reconstruction} therefore imply
\begin{equation}
\sup_{R\geq1}R^{\frac{1}{\crrb}}\left(|\Ytil|+|\widetilde V|+\alpha|\widetilde Q|+R|\partial_R\widetilde V|+\alpha R|\partial_R\widetilde Q|\right)(R,\tau)\leq\mathsf{C}\varepsilon.
\label{eq:nonlinear-bootstrap-sharp-VQ-bound}
\end{equation}
In particular we deduce
\begin{equation}
(1-\mathsf{C}\varepsilon)\Qbar(R)\leq Q(R,\tau)\leq(1+\mathsf{C}\varepsilon)\Qbar(R).
\label{eq:nonlinear-bootstrap-global-Q-positivity}
\end{equation}

\emph{3. Entropy reconstruction.} Since $\widetilde K=\log(B/\Bbar)$, \eqref{eq:nonlinear-bootstrap-initial-far-field} and~\eqref{eq:nonlinear-bootstrap-background-two-sided-bounds} give $\|\widetilde K(0)\|_{L^\infty}\leq\mathsf{C}\varepsilon^2$. Subtracting the stationary equation from~\eqref{eq:K-eq} gives
\begin{equation*}
(\partial_\tau+(\crrb+V)R\partial_R)\widetilde K=\tcuu-\widetilde V(1+R\partial_R\Kbar).
\end{equation*}
 Integration along the $A$-characteristics from $\tau=0$ or the shock, using~\eqref{eq:nonlinear-bootstrap-undifferentiated-shock-traces}, \eqref{eq:nonlinear-bootstrap-pressure-and-cu-control}, and~\eqref{eq:nonlinear-bootstrap-weighted-VQ-bound}, yields
\begin{equation}
\|\widetilde K(\tau)\|_{L^\infty}\leq\mathsf{C}\varepsilon^2+\mathsf{C}\varepsilon\int_0^\tau e^{-\lambda s}\,\dd s\leq\mathsf{C}\varepsilon.
\label{eq:nonlinear-bootstrap-global-K-bound}
\end{equation}
Using the last identity in~\eqref{eq:ring-inverse}, the bounds~\eqref{eq:nonlinear-bootstrap-undifferentiated-shock-traces}, \eqref{eq:nonlinear-bootstrap-weighted-VQ-bound}, \eqref{eq:nonlinear-bootstrap-background-two-sided-bounds}, and~\eqref{eq:nonlinear-bootstrap-global-Q-positivity}, and integrating from the shock, we obtain
\begin{equation}
\sup_{R\geq1}R^{-\frac{1}{2\crrb}}\left(|\widetilde K|+R|\partial_R\widetilde K|\right)(R,\tau)\leq\mathsf{C}\varepsilon e^{-\lambda\tau},
\label{eq:nonlinear-bootstrap-weighted-K-bound}
\end{equation}
and
\begin{equation}
\sup_{R\geq1}R^{\frac{1}{2\crrb}-1}\left(|\widetilde B|+R|\partial_R\widetilde B|\right)(R,\tau)\leq\mathsf{C}\varepsilon e^{-\lambda\tau}.
\label{eq:nonlinear-bootstrap-weighted-B-bound}
\end{equation}

\emph{4. Convergence to the profile power.} Interpolating~\eqref{eq:nonlinear-bootstrap-weighted-VQ-bound} with~\eqref{eq:nonlinear-bootstrap-sharp-VQ-bound}, and~\eqref{eq:nonlinear-bootstrap-weighted-K-bound} with~\eqref{eq:nonlinear-bootstrap-global-K-bound}, gives, for $0<\eta<\frac{1}{2\crrb}$,
\begin{equation}
\sup_{R\geq1}R^{\frac{1}{\crrb}-\eta}\left(|\widetilde V|+\alpha|\widetilde Q|+\tfrac{|\widetilde B|}{R}\right)(R,\tau)\leq\mathsf{C}\varepsilon e^{-2\crrb\eta\lambda\tau}.
\label{eq:nonlinear-bootstrap-near-profile-convergence}
\end{equation}
Here we use $|e^{\widetilde K}-1|\leq e^{\|\widetilde K\|_{L^\infty}}|\widetilde K|$ and $\mathsf{C}\varepsilon\leq1$.

\subsection{Proof of Theorem~\ref{thm:main-self-similar}}

We now gather the conclusions and sum up the proof of Theorem~\ref{thm:main-self-similar}. 
\begin{proof}[Proof of Theorem~\ref{thm:main-self-similar}]
We assumed the bootstrap bounds~\eqref{eq:nonlinear-bootstrap-exterior-assumption} and~\eqref{eq:nonlinear-bootstrap-modulation-assumption} on $[0,T]$. The bounds~\eqref{eq:nonlinear-bootstrap-A-improvement-preliminary}, \eqref{eq:nonlinear-bootstrap-W-improvement-preliminary}, and~\eqref{eq:nonlinear-bootstrap-Z-improvement}, combined with~\eqref{eq:nonlinear-bootstrap-exterior-improvement} and~\eqref{eq:nonlinear-bootstrap-pressure-and-cu-control}, strictly improve these assumptions and close the bootstrap by continuity. In particular, since we control the $C^1$ norm of the solution, a continuity argument combined with classical local well-posedness estimates for the shock front problem~\cite{Maj83exist,Maj83stab,Li80,Daf10} proves that there exists a unique asymptotically self-similar shock solution to Euler for all $\tau \ge 0$. 

Part~(\ref{item:main-self-similar-decay}) follows now from the bounds~\eqref{eq:nonlinear-bootstrap-exterior-improvement}, \eqref{eq:nonlinear-bootstrap-weighted-VQ-bound}, and~\eqref{eq:nonlinear-bootstrap-weighted-B-bound}. Part~(\ref{item:main-self-similar-modulation}) is proved in~\eqref{eq:nonlinear-bootstrap-pressure-and-cu-control}. Part~(\ref{item:main-self-similar-sharp-bounds}) follows from~\eqref{eq:nonlinear-bootstrap-sharp-VQ-bound}, \eqref{eq:nonlinear-bootstrap-global-Q-positivity}, and~\eqref{eq:nonlinear-bootstrap-global-K-bound}. Part~(\ref{item:main-self-similar-convergence}) follows from~\eqref{eq:nonlinear-bootstrap-near-profile-convergence}.
\end{proof}
We now prove Corollary~\ref{thm:main-physical}.
\begin{proof}[Proof of Corollary~\ref{thm:main-physical}]
We apply the change of variables~\eqref{eq:modulated-coordinates}--\eqref{eq:modulated-ansatz} to the solution of Theorem~\ref{thm:main-self-similar}. At the shock, $u^r(1^+,-1)=\Cu(0)V(1^+,0)$ and $\bar u(1^+,-1)=\Vbar(1^+)=-\frac{2\crrb}{\gamma+1}$. By~\eqref{eq:nonlinear-bootstrap-initial-far-field}, the trace $V(1^+,0)$ is bounded away from zero. The physical velocity smallness therefore gives
\begin{equation*}
|\Cu(0)-1|
=\left|\tfrac{u^r(1^+,-1)-\bar u(1^+,-1)-\widetilde V(1^+,0)}{V(1^+,0)}\right|
\leq\mathsf{C}\varepsilon^2.
\end{equation*}
The modulation choice~\eqref{eq:cr:0} gives
\begin{subequations}
\label{eq:physical-proof-scaling}
\begin{align}
\Cr(\tau)&=e^{-\crrb\tau},
\label{eq:physical-proof-radial-scaling}\\
\Cu(\tau)=\Cb(\tau)&=\Cu(0)\exp\left((1-\crrb)\tau-\int_0^\tau\tcuu(s)\,\dd s\right).
\label{eq:physical-proof-amplitude-scaling}
\end{align}
\end{subequations}
For the blowup time $t_\ast$, we have the identity
\begin{equation*}
t_\ast=t(+\infty)=-1+\int_0^\infty\tfrac{\Cr(s)}{\Cu(s)}\,\dd s=-1+\Cu(0)^{-1}\int_0^\infty\exp\left(-s+\int_0^s\tcuu(v)\,\dd v\right)\,\dd s.
\end{equation*}
The modulation estimate~\eqref{eq:main-self-similar-modulation-decay} gives $\int_0^\infty|\tcuu(s)|\,\dd s\leq \mathsf{C}\varepsilon$. Hence the time $t_\ast:=\lim_{\tau\to\infty}t(\tau)$ is finite; more precisely we have the bound
\begin{equation*}
|\Cu(0)(t_\ast+1)-1|\leq\mathsf{C}\varepsilon,
\end{equation*}
which together with $|\Cu(0)-1|\leq\mathsf{C}\varepsilon^2$ proves~\eqref{eq:main-physical-focusing-time}.

\emph{Part~(\ref{item:main-physical-focusing}).} The bound~\eqref{eq:main-physical-focusing-time} is proved above. By~\eqref{eq:modulated-time}, we have
\begin{equation}
t_\ast-t(\tau)=\Cu(0)^{-1}\int_\tau^\infty\exp\left(-s+\int_0^s\tcuu(v)\,\dd v\right)\,\dd s.
\label{eq:physical-proof-remaining-time}
\end{equation}
The modulation bound~\eqref{eq:main-self-similar-modulation-decay} and $|\Cu(0)-1|\leq\mathsf{C}\varepsilon^2$ give $t_\ast-t(\tau)\simeq e^{-\tau}$. Together with~\eqref{eq:physical-proof-scaling} and the identities $\s(t(\tau))=\Cr(\tau)$ and $-\ds(t(\tau))=\crrb\Cu(\tau)$, we have~\eqref{eq:main-physical-focusing-rates}.

\emph{Part~(\ref{item:main:rh}).} The Rankine--Hugoniot conditions follow from Theorem~\ref{thm:main-self-similar}, while the Lax inequalities follow from~\eqref{eq:nonlinear-bootstrap-pressure-decay}, \eqref{eq:nonlinear-bootstrap-positive-inner-A-speed}, and~\eqref{eq:nonlinear-bootstrap-Z-boundary-directions}.

\emph{Part~(\ref{item:main-physical-blowup}).} The sharp bound~\eqref{eq:main-self-similar-sharp-VQ-bound}, the profile decay~\eqref{eq:guderley-profile-decay}, and $\crrb<1$ bound $U=RV$ and $\Sigma=RQ$ uniformly for $R\geq1$. Their exterior shock traces~\eqref{eq:finite-pressure-RH-traces} remain bounded away from zero by~\eqref{eq:nonlinear-bootstrap-pressure-decay}. Thus~\eqref{eq:modulated-ansatz}, \eqref{eq:physical-proof-amplitude-scaling}, and~\eqref{eq:physical-proof-remaining-time}, together with the interior state~\eqref{eq:finite-pressure-interior-state}, give the blowup rates~\eqref{eq:main-physical-blowup-rates}. For the density, $\Cb=\Cu$ and~\eqref{eq:density-VQK} give
\begin{equation*}
\rho(\Cr(\tau)R,t(\tau))=\overline{\mathsf R}(R)\left(\tfrac{Q(R,\tau)}{\Qbar(R)}e^{-\widetilde K(R,\tau)}\right)^{\frac{1}{\alpha}}.
\end{equation*}
The background density $\overline{\mathsf R}=(\frac{\alpha R\Qbar}{\Bbar})^{\frac{1}{\alpha}}$ is bounded by~\eqref{eq:nonlinear-bootstrap-background-two-sided-bounds}. The bounds~\eqref{eq:main-self-similar-Q-comparison} and~\eqref{eq:main-self-similar-global-K-bound} therefore bound the exterior density uniformly, while the interior density is $1$.

\emph{Part~(\ref{item:main-physical-interior}).} The constant state~\eqref{eq:finite-pressure-interior-state} solves~\eqref{eq:euler:cl} and agrees with the initial interior data. By uniqueness, it is preserved ahead of the shock, proving~\eqref{eq:main-physical-interior-state}.

\emph{Part~(\ref{item:main-physical-convergence}).} Using~\eqref{eq:physical-proof-scaling}, \eqref{eq:physical-proof-remaining-time}, and $\s(t(\tau))=\Cr(\tau)$, we obtain
\begin{equation*}
\tfrac{(t_\ast-t(\tau))\Cu(\tau)}{\s(t(\tau))}=\int_0^\infty\exp\left(-s+\int_\tau^{\tau+s}\tcuu(v)\,\dd v\right)\,\dd s\longrightarrow1.
\end{equation*}
The limit follows by the dominated convergence theorem. At $r=\s(t)R$, the ansatz~\eqref{eq:modulated-ansatz}, $\Cb=\Cu$, and the perturbation decay~\eqref{eq:main-self-similar-perturbation-decay} now imply~\eqref{eq:main-physical-profile-convergence} locally uniformly for $R>1$. On $0\leq R<1$, the factor $\frac{t_\ast-t}{\s(t)}$ tends to zero by~\eqref{eq:main-physical-focusing-rates} and $\crrb<1$. Therefore we obtain the convergence also for $R <1$.
\end{proof}

\begin{remark}
We now verify the three conditions defining an implosion in the introduction, with center $x_*=0$ and time $t_*$. By~\eqref{eq:finite-pressure-RH-density} and~\eqref{eq:nonlinear-bootstrap-pressure-decay}, the exterior density trace satisfies $\rho(\s(t)^+,t)\to\frac{\gamma+1}{\gamma-1}>0$, while~\eqref{eq:finite-pressure-RH-Q} and~\eqref{eq:physical-proof-amplitude-scaling} give $\sigma(\s(t)^+,t)\to+\infty$. Since $p=\frac{\alpha^2}{\gamma}\rho\sigma^2$ and $\s(t)\to0$, the pressure becomes unbounded in every neighborhood of the origin, proving the first condition.

For the inward-focusing condition, we take $\mathsf r(t)=\s(t)/2$. The velocity vanishes on this ball by~\eqref{eq:main-physical-interior-state}, so the required scalar product is zero.

Finally, the spatial bound~\eqref{eq:main-self-similar-sharp-VQ-bound} and the profile decay~\eqref{eq:guderley-profile-decay} give, for $r=\Cr(\tau)R>\s(t(\tau))$,
\begin{equation*}
|u^r(r,t(\tau))|+|\sigma(r,t(\tau))|
\leq\mathsf C\Cu(\tau)R^{1-\frac{1}{\crrb}}
\leq\mathsf C r^{1-\frac{1}{\crrb}}.
\end{equation*}
This implies the flow variables do not blow up away from $0$.
\end{remark}

\section*{Acknowledgements}
The author thanks his advisor, Vlad Vicol, for introducing him to this problem and for stimulating discussions. The author is also grateful to Jiajie Chen and Steve Shkoller for stimulating discussions on the stability of imploding shock waves. The work of the author was partially supported by the Collaborative NSF grant DMS-2307681 and by a Simons Dissertation Fellowship.

The author acknowledges the use of GPT 5.6 Sol and GPT 6 Astra for assistance with editing, checking calculations, and numerical exploration. In particular, the author identified the constant weights used in the stability argument in Section~\ref{sec:nonlinear-bootstrap} and the constants in the barrier constructions in Appendix~\ref{app:three-profile-interval-proof} through discussions with ChatGPT. These models also assisted with drafting and refining the \texttt{SageMath} code used for the computer-assisted verification in Appendix~\ref{app:three-profile-computation}.

\appendix
\section{Proof of the three profile inequalities}
\label{app:three-profile-interval-proof}
The goal of this Appendix is to provide the mathematical arguments used to prove Lemma~\ref{lem:three-profile-inequalities}. The proof of the Lemma is split into different parts, many of which require a computer-assisted argument. The details of the computer-assisted arguments are contained in Appendix~\ref{app:three-profile-computation}. 
 
The arguments used to prove the Lemma involve barriers in the phase portrait (see Figure~\ref{fig:guderley-profile-enclosure}). We start by discussing the construction of those.

\subsection{The barrier curves}

In order to treat the uniform limit as $\gamma \downarrow 1$, we introduce the parameter $\theta$
\begin{equation*}
 \theta:=\sqrt{\tfrac{2(\gamma-1)}\gamma},\qquad
 \gamma=\tfrac2{2-\theta^2},\qquad
 \alpha=\tfrac{\theta^2}{2(2-\theta^2)},\qquad
 0<\theta\leq\tfrac2{\sqrt5}.
\end{equation*}
It is convenient to normalize the variables $(\bar V,\bar Q)$ in the following way:
\begin{subequations}
\label{eq:ca-normalization}
\begin{align}
 X&:=\tfrac{\crrb+\Vbar}{\crrb\theta},& C&:=\tfrac{\alpha\Qbar}{\crrb\theta},\label{eq:ca-normalization-XC}\\
 z&:=\tfrac{\crrb^{-1}-1}{2\gamma},& \crrb&=\tfrac1{1+2\gamma z}.\label{eq:ca-normalization-z}
\end{align}
\end{subequations}
The first advantage of this notation is that the shock location is now independent of $\crrb$:
\begin{equation}
\label{eq:ca-shock-point}
 X_0=\tfrac\theta{4-\theta^2},\qquad C_0=\tfrac2{4-\theta^2}.
\end{equation}
The second advantage is that at the sonic point we have
\begin{equation*}
 (X,C)|_{P_6}=(X_6,X_6),
\end{equation*}
where
\begin{equation*}
X_6 = \tfrac{\crrb + \Vbar_6}{\crrb \theta}.
\end{equation*}
For later use, it will be convenient to use the normalized quantity $q$\footnote{It can be shown that as $\gamma \downarrow 1$,  $q \to \frac{1}{2}$.}
\begin{equation}
\label{eq:ca-background-endpoint-z}
 q:=\tfrac z\theta=\tfrac{X_6(1-\theta X_6)}{\gamma+(2-\gamma)\theta X_6},
\end{equation}
where the last identity follows from the definitions of $X_6$ and $\Vbar_6$ in~\eqref{eq:ca-normalization-XC} and~\eqref{eq:guderley:sonic-V-value}.
To normalize the shock to sonic interval uniformly in $\gamma$, we also introduce the rescaled coordinate
\begin{equation*}
 L:=X_6-X_0,\qquad x:=\tfrac{X_6-X}{L}.
\end{equation*}
In particular, $x=0$ corresponds to the sonic location, while $x=1$ corresponds to the shock location.

We now use $x$ to write the ODE that $C(x)$ solves
\begin{equation*}
 \tfrac{\dd C}{\dd x}=-\tfrac{L\theta C N_c[X,C]}{X N_x[X,C]}.
\end{equation*}
Here $N_c$ and $N_x$ are polynomials computed from the definitions of $\Delta_{\Vbar}$ and $\Delta_{\Qbar}$ in~\eqref{eq:guderley:Delta-V-def} and~\eqref{eq:guderley:Delta-Q-def}. In particular, they have the explicit formulas
\begin{subequations}
\label{eq:ca-scaled-phase-fields}
\begin{align}
 N_c[X,C]&:=C^2(X+2q)-\gamma X^3+\bigl(q\gamma(\gamma-3)+\tfrac{\gamma\theta}{2}\bigr)X^2-\tfrac{q\gamma^2\theta}{2}X,
 \label{eq:ca-scaled-Nf}\\
 N_x[X,C]&:=C^2(3\theta X-3+4\theta q)-(\theta X-1)X(X+2\gamma q),
 \label{eq:ca-scaled-FG}\\
 D(X,C)&:=X^2-C^2.
 \label{eq:ca-scaled-D}
\end{align}
\end{subequations}

 Let $T_3(x)=\sum_{j=0}^3c_jx^j$ be the Taylor polynomial of $C$ (as a function of $x$) at the sonic point. We already computed these coefficients in the variables $(\bar V, \bar Q)$ in~\eqref{eq:guderley:sonic-phase-slope} and~\eqref{eq:guderley:sonic-coefficient-recurrence}. In terms of $\eta_j$, we can easily compute the $c_j$ using the chain rule:
\begin{equation}
\label{eq:ca-common-sonic-Taylor}
 c_0=X_6,\qquad
 c_j=(-1)^j\alpha(\crrb\theta)^{j-1}\eta_jL^j,\quad 1\leq j\leq3.
\end{equation}
On $0\leq x\leq\frac12$, we will use the following barriers:
\begin{equation}
\label{eq:ca-common-sonic-barriers}
 C_6^\flat(x):=T_3(x)-\tfrac{|c_3|}{1000}x^3,\qquad
 C_6^\sharp(x):=T_3(x)+\tfrac52|c_3|x^3.
\end{equation}

In the shock region $\frac{1}{2} \le x \le 1$, we will have to be more careful in the construction of barriers since some of the natural Taylor coefficients degenerate in the limit $\gamma \downarrow 1$. We introduce the following quantities
\begin{equation}
\label{eq:ca-shock-factor}
 k:=\tfrac{2\theta q}{3-4\theta q},\qquad
 \mathcal B(X,C):=\tfrac{\theta N_c+k N_x}{X\theta N_x},\qquad
 C=\left(\tfrac{X_0}{X}\right)^k S.
\end{equation}
This choice is done so that we can rewrite
\begin{equation*}
 \tfrac{\dd C}{\dd X}=-\tfrac{k}{X}C+\theta C\mathcal B(X,C),
\end{equation*}
and then 
\begin{equation*}
 \tfrac{\dd S}{\dd X}=\theta S\mathcal B(X,C),\qquad S(X_0)=C_0.
\end{equation*}
We write $h:=X-X_0=L(1-x)$. By using the ODE solved by $S$ we deduce the Taylor series
\begin{equation*}
 S(X_0+h)=C_0+\theta a_1h+\theta a_2h^2+O(h^3)\qquad\text{as }h\to0.
\end{equation*}
More explicitly, with $\mathcal B_0:=\mathcal B(X_0,C_0)$, differentiation of the equation for $S$ gives
\begin{equation}
\label{eq:ca-shock-Taylor-coefficients}
 a_1:=C_0\mathcal B_0,\qquad
 a_2:=\tfrac{C_0}{2}\left(\theta\mathcal B_0^2+
 \partial_X\mathcal B(X_0,C_0)+\tfrac{\theta C_0 N_c[X_0,C_0]}{X_0N_x[X_0,C_0]}\partial_C\mathcal B(X_0,C_0)\right).
\end{equation}
Then, for $\frac12\leq x\leq1$, we define
\begin{subequations}
\label{eq:ca-common-shock-barriers}
\begin{align}
 C_1^\flat(x)&:=\left(\tfrac{X_0}{X_0+h}\right)^k
 \left[C_0+\theta\left(a_1h+(a_2+\varepsilon_\flat)h^2\right)\right],
 \label{eq:ca-common-shock-lower}\\
 C_1^\sharp(x)&:=\left(\tfrac{X_0}{X_0+h}\right)^k
 \left[C_0+\theta\left(a_1h+(a_2+\varepsilon_\sharp)h^2\right)\right],
 \label{eq:ca-common-shock-upper}\\
 \varepsilon_\flat&:=-\tfrac{\theta^2}{32}-\tfrac1{10000},\qquad
 \varepsilon_\sharp:=\tfrac{(1-\theta)^5}{2}+\tfrac1{1000}.
 \label{eq:ca-common-shock-errors}
\end{align}
\end{subequations}
That is, we slightly perturb the Taylor series for $S$, and multiply it back by $\left(\frac{X_0}{X_0+h}\right)^k$. Before proceeding we observe that the value of $z$ is not known a priori; part of the proof will consist in finding rigorous bounds for $z$. In particular, the functions $C_6^\flat$, $C_6^\sharp$, $C_1^\flat$, and $C_1^\sharp$ should not be thought of only as functions of $x$ and $\gamma$, but also of $z$.

In order to estimate the parameter $z(\gamma)$, we will use two functions $z_+$ and $z_-$ whose precise construction is detailed in Section~\ref{subsec:ca-shooting} of Appendix~\ref{app:three-profile-computation}. These functions have been obtained by numerical shooting at selected values of $\gamma$ and then interpolating in $\theta$.

Following Jang, Liu, and Schrecker~\cite{JLS25b}, we define
\begin{equation*}
 z_g(\gamma):=\tfrac{2(\gamma-1)}{\sqrt{(2\gamma^2-\gamma+1)^2+2\gamma(\gamma-1)(4\gamma(\gamma-1)+\tfrac83)}+2\gamma^2-\gamma+1},\qquad
 z_M(\gamma):=\tfrac1{\gamma+2+2\sqrt{2\gamma}}.
\end{equation*}

\begin{lemma}
\label{lem:ca-common-barriers}
Let $z_-(\gamma)<z_+(\gamma)$ be the functions constructed in Section~\ref{subsec:ca-shooting}. Then, for every $1<\gamma \le \frac53$ and $z_-(\gamma)\le z \le z_+(\gamma)$ the following hold.

\begin{enumerate}[label=(\roman*),ref=\roman*,leftmargin=2em]
\item\label{item:ca-common-admissibility} We have the inequalities
\begin{equation}
\label{eq:ca-common-admissible-z}
 z_g(\gamma)<z<z_M(\gamma).
\end{equation}
\item\label{item:ca-common-qualitative} The coefficients $c_1,c_2,c_3$ are well defined, with $L>0$, $c_1>0$ and $c_3\ne0$. The quantity $1+(\gamma-2)z-2\theta X_6$ and the slope discriminant,
\begin{equation*}
 \left.\left((\partial_X N_x-\theta\partial_C N_c)^2
 +4\theta\partial_C N_x\partial_X N_c\right)\right|_{(X,C)=(X_6,X_6)},
\end{equation*}
are positive. Moreover the functions $C_6^\flat$, $C_6^\sharp$, $C_1^\flat$, and $C_1^\sharp$ are positive on their respective intervals of definition.
\item\label{item:ca-common-field-signs} For $0<x\leq\frac12$ and $C_6^\flat(x)\leq C\leq C_6^\sharp(x)$, and for $\frac12\leq x\leq1$ and $C_1^\flat(x)\leq C\leq C_1^\sharp(x)$, we have
\begin{equation}
\label{eq:ca-common-field-signs}
 N_x[X,C]<0,\qquad D(X,C)<0.
\end{equation}
\item\label{item:ca-common-sonic-signs} For $0<x<\frac12$, we have the inequalities
\begin{equation}
\label{eq:ca-common-sonic-signs}
\begin{aligned}
 -X N_x\bigl[X,C_6^\flat(x)\bigr]\tfrac{\dd C_6^\flat}{\dd x}(x)
 -L\theta C_6^\flat(x)N_c\bigl[X,C_6^\flat(x)\bigr]&<0,\\
 -X N_x\bigl[X,C_6^\sharp(x)\bigr]\tfrac{\dd C_6^\sharp}{\dd x}(x)
 -L\theta C_6^\sharp(x)N_c\bigl[X,C_6^\sharp(x)\bigr]&>0.
\end{aligned}
\end{equation}
\item\label{item:ca-common-shock-signs} For $\frac12<x<1$, we have the inequalities
\begin{equation}
\label{eq:ca-common-shock-signs}
\begin{aligned}
 -X N_x\bigl[X,C_1^\flat(x)\bigr]\tfrac{\dd C_1^\flat}{\dd x}(x)
 -L\theta C_1^\flat(x)N_c\bigl[X,C_1^\flat(x)\bigr]&>0,\\
 -X N_x\bigl[X,C_1^\sharp(x)\bigr]\tfrac{\dd C_1^\sharp}{\dd x}(x)
 -L\theta C_1^\sharp(x)N_c\bigl[X,C_1^\sharp(x)\bigr]&<0.
\end{aligned}
\end{equation}
\end{enumerate}
We recall that $X=X_6-Lx$.
\end{lemma}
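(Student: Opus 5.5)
This is a finite collection of sign conditions on explicit functions of the three parameters $(\theta,z,x)$ (or $(\theta,z,x,C)$ for item (iii)), over the compact box $0<\theta\le\frac{2}{\sqrt5}$, $z_-(\gamma)\le z\le z_+(\gamma)$. The plan is to prove it by interval arithmetic with adaptive subdivision, after first removing the degeneracies at $\theta\downarrow0$ and at the endpoints $x=0$, $x=\frac12$, $x=1$ by hand. The order is as follows.

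\emph{Step 1: item (i).} The functions $z_\pm$, $z_g$, $z_M$ are explicit in $\theta$. The admissibility $z_g<z_-$ and $z_+<z_M$ then reduce to one-variable inequalities in $\theta$. I would divide by the common factor vanishing at $\theta=0$ (all of $z_g$, $z_\pm$ and $z_M-\tfrac13$ behave regularly in $\theta$), so that the inequalities stay strict up to $\theta=0$. They are then checked on subintervals of $[0,\frac2{\sqrt5}]$.

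\emph{Step 2: item (ii).} I compute $X_6$ from the quadratic \eqref{eq:guderley:sonic-quadratic} in normalized form, with $q=z/\theta$. Then $L=X_6-X_0$, and the $c_j$ come from \eqref{eq:guderley:sonic-phase-slope}, the recurrence \eqref{eq:guderley:sonic-coefficient-recurrence} and \eqref{eq:ca-common-sonic-Taylor}. Each of these is evaluated as an interval enclosure over boxes in $(\theta,z)$. The discriminant and $1+(\gamma-2)z-2\theta X_6$ are checked to be positive, and the denominators appearing in the recurrence (nonzero by the lemmas cited from \cite{JLS25b}) are checked to exclude $0$. This yields $L>0$, $c_1>0$ and $c_3\neq0$. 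Positivity of the four barriers follows from a crude lower bound on each box.

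\emph{Step 3: items (iii)--(v).} For each box in $(\theta,z,x)$, I enclose $X=X_6-Lx$ together with the barrier values and their $x$-derivatives. The quantities $\frac{\dd C_1}{\dd x}$ are obtained by differentiating \eqref{eq:ca-common-shock-barriers} explicitly, using $\dd h/\dd x=-L$. For item (iii) I would use monotonicity in $C$: $N_x$ is quadratic in $C$ and $D$ is decreasing in $C^2$. It therefore suffices to check the two endpoint curves, or to bound $N_x$ on the $C$-interval directly.

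\emph{Main obstacle: the endpoint degeneracies in (iv)--(v).} The expressions in (iv) and (v) vanish to high order at the endpoints, and naive enclosures will fail there. At $x=0$ the Taylor polynomial $T_3$ matches $C$ to third order, so the left side of (iv) is $O(x^4)$. Its leading coefficient is proportional to $-\frac{1}{1000}|c_3|$ (resp.\ $\frac52|c_3|$) times a nonzero factor, plus the residual of the true fourth Taylor coefficient. I would factor out $x^4$ symbolically, i.e.\ compute the polynomial quotient exactly, before applying intervals. Similarly, at $x=1$ (so $h=0$) the shock barriers match $S$ to second order, and I would factor out $h^2$; the perturbations $\varepsilon_\flat,\varepsilon_\sharp$ are designed to dominate the $O(h^3)$ residual. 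Near $\theta\to0$ several quantities ($X_0$, $L$, $k$) degenerate. There I would rescale by the appropriate power of $\theta$, using the $\theta$-factors already built into $X$, $C$, $q$ and $\mathcal B$, and verify the rescaled inequalities on $[0,\theta_0]$ including $\theta=0$. I expect this uniform-in-$\theta$ treatment near $\gamma\downarrow1$ to be the hardest part. The fallback is to prove a separate asymptotic statement for $\theta\le\theta_0$ with explicit remainder bounds.
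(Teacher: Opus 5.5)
Your plan has the same outline as the paper's: interval arithmetic with subdivision, after removing degeneracies symbolically. Your treatment of (i) and (iii) essentially matches it.

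\begin{itemize}
\item In (iii), the operative fact is that $N_x$ and $D$ are affine in $C^2$. Together with positivity of the barriers, this makes the endpoint curves sufficient. On the sonic half you must check $-N_x/x$ and $-D/x$, since both vanish at $x=0$.
\item In (ii), the paper does less by computer than you propose. It uses $1+(\gamma-2)z-2\theta X_6=\sqrt{1-2(\gamma+2)z+(\gamma-2)^2z^2}>0$, which follows from $z<z_M$. It takes $L>0$, the discriminant and the $c_j$ from Jang--Liu--Schrecker, and checks only barrier positivity and $c_3/\theta\neq0$ numerically. You cannot certify $c_3\neq0$ itself near $\theta=0$, because $c_3=O(\theta)$.
\item A small slip: $z_M(1)=3-2\sqrt2$, not $\tfrac13$, and only $z-z_g$ needs division by $\theta$.
\end{itemize}

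The gap is in the step you rightly call the crux, the desingularization for (iv)--(v). First, both vanishing orders are wrong.
\begin{itemize}
\item Sonic end. Write $C_6^{\flat,\sharp}=T_3+\beta x^3$ with $\beta=-\tfrac{|c_3|}{1000}$ or $\tfrac52|c_3|$. The mismatch with the true profile is $\beta x^3+O(x^4)$. Since the coefficient $XN_x$ of the derivative vanishes only linearly at $x=0$, you get $\mathfrak I[C_6^{\flat,\sharp}]=\beta Kx^3+O(x^4)$. Here $K$ is a nonzero multiple of the $\ell=3$ factor $3\partial_{\Vbar}\Delta_{\Vbar}+4\eta_1\partial_{\Qbar}\Delta_{\Vbar}-\partial_{\Qbar}\Delta_{\Qbar}$ in \eqref{eq:guderley:sonic-coefficient-recurrence}. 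So the defect vanishes cubically, and the true fourth coefficient enters only at order $x^4$.
\item Shock end. The shock is a regular point of the ODE, so the $O(h^2)$ mismatch $\theta\varepsilon h^2$ produces a defect of order $h$: $\zeta=2\theta\varepsilon h+O(h^2)$.
\end{itemize}
Dividing by $x^4$ or $h^2$ is therefore not an exact polynomial division, and it gives quotients that blow up at the endpoints. The quantities to enclose are $\mathfrak I/(\theta x^3)$ and $\zeta/(\theta h)$, as in \eqref{eq:ca-common-sonic-deflation} and \eqref{eq:ca-common-shock-deflation}.

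Second, and more seriously, the recipe ``rescale by a power of $\theta$ and check on $[0,\theta_0]$ including $\theta=0$'' cannot work on the shock half. The limit $\theta\downarrow0$ is not uniform in $x$ there. The shock $X_0=\theta/(4-\theta^2)$ sits at distance $O(\theta)$ from the singular point $X=0$ of the phase ODE. This creates a layer of width $O(\theta)$ in $1-x$, where quantities depend on $(1-x)/\theta$. For example, $\theta/X$ equals $4-\theta^2$ at the shock but tends to $0$ at any fixed $x<1$. Likewise $\psi(h/X_0,k)$ tends to $1$ at $h=0$ but to $0$ at fixed $h>0$.

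Moreover, the expression in (v) equals $LXN_x(X_0/X)^k\zeta$, and its factor $X=X_0+h$ degenerates at the shock as $\theta\to0$. So even after the correct division by $\theta h$, your unreduced expression tends to $0$ as $(\theta,x)\to(0,1)$. An interval check on any box containing that corner therefore fails.

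The paper closes this corner in three moves:
\begin{itemize}
\item It passes to the $S$-form, which divides out $LXN_x(X_0/X)^k$; $k$ is chosen so that $\mathcal B$ is regular at $X=0$.
\item It evaluates $(X_0/X)^k$ and $\psi$ through $\phi(\theta,a)=\theta\log(1+a/\theta)$, $E$ and $J$, as in \eqref{eq:ca-common-normalized-power}. These functions are monotone and extend continuously to $\theta=0$.
\item It treats $\theta/X\in[0,4]$ as an independent coordinate and splits at $x=1-\theta$, using $\xi=(1-x)/\theta$ and $\nu=\theta/(1-x)$.
\end{itemize}
Your ``fallback asymptotic statement'' is exactly this missing two-scale argument. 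Without it, the first $\theta$-box near the shock, and hence the lemma for $\gamma$ close to $1$, is not covered.
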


\begin{proof}
We prove the five assertions in order.
Part~(\ref{item:ca-common-admissibility}). The proof of the inequalities $z_g(\gamma)<z<z_M(\gamma)$ is computer-assisted; see Section~\ref{subsec:ca-computational-proofs} for details.

Part~(\ref{item:ca-common-qualitative}). The bounds~\eqref{eq:ca-common-admissible-z} imply $L>0$; see Remark~3.2 and Appendix~B in~\cite{JLS25b}. For the other sign, \eqref{eq:guderley:sonic-V-value} and~\eqref{eq:ca-normalization} give
\begin{equation*}
 1+(\gamma-2)z-2\theta X_6
 =\sqrt{\bigl[1-(\gamma+2+2\sqrt{2\gamma})z\bigr]\bigl[1-(\gamma+2-2\sqrt{2\gamma})z\bigr]}>0.
\end{equation*}
Both factors are positive by the upper bound in~\eqref{eq:ca-common-admissible-z}; this is equivalent to the positivity of the radical in equations~(2.8)--(2.9) and Remark~2.5 of~\cite{JLS25b}. Lemmas~3.3--3.5 of~\cite{JLS25b} give a positive discriminant, $\eta_1<0$, and well-defined Taylor coefficients. Thus~\eqref{eq:ca-common-sonic-Taylor} gives well-defined $c_1,c_2,c_3$ and $c_1=-\alpha\eta_1L>0$.

 The functions $C_6^\flat$, $C_6^\sharp$, $C_1^\flat$, and $C_1^\sharp$ are shown to be positive on their respective intervals of definition by a computer-assisted argument. 

Part~(\ref{item:ca-common-field-signs}). The inequalities~\eqref{eq:ca-common-field-signs} on $C_6^\flat$, $C_6^\sharp$, $C_1^\flat$, and $C_1^\sharp$ for $x>0$ are verified by a computer-assisted argument; see Sections~\ref{subsec:ca-sonic-cap} and~\ref{subsec:ca-computational-proofs}. By part~(\ref{item:ca-common-qualitative}), these curves are positive. Since both $N_x$ and $D$ are affine in $C^2$, the same signs hold throughout the whole region.

Part~(\ref{item:ca-common-sonic-signs}). The inequalities~\eqref{eq:ca-common-sonic-signs} are verified by a computer-assisted calculation; see Sections~\ref{subsec:ca-sonic-cap} and~\ref{subsec:ca-computational-proofs} for details.

Part~(\ref{item:ca-common-shock-signs}). We use the change of unknown $C=(\frac{X_0}{X})^kS$. Since $\frac{\dd X}{\dd x}=-L$, the definition of $\mathcal B$ gives
\begin{equation*}
 -X N_x\tfrac{\dd C}{\dd x}-L\theta C N_c
 =LX N_x\left(\tfrac{X_0}{X}\right)^k
 \left(\tfrac{\dd S}{\dd X}-\theta S\mathcal B(X,C)\right).
\end{equation*}
By part~(\ref{item:ca-common-field-signs}), $LX N_x(\frac{X_0}{X})^k$ is negative. Thus, with $S^\flat,S^\sharp$ denoting the bracketed polynomials in~\eqref{eq:ca-common-shock-barriers}, the inequalities~\eqref{eq:ca-common-shock-signs} are equivalent to
\begin{equation*}
 \tfrac{\dd S^\flat}{\dd X}-\theta S^\flat\mathcal B(X,C_1^\flat)<0,
 \qquad
 \tfrac{\dd S^\sharp}{\dd X}-\theta S^\sharp\mathcal B(X,C_1^\sharp)>0.
\end{equation*}
These inequalities are verified by a computer-assisted calculation; see Sections~\ref{subsec:ca-sonic-cap} and~\ref{subsec:ca-computational-proofs} for details.

\end{proof}

From the barrier inequalities and a simple continuity argument, we are able to deduce lower and upper bounds for the Guderley exponent $\crrb$.

\begin{lemma}
\label{lem:ca-common-parameter}
For every $1<\gamma\leq\frac53$, the following hold.
\begin{enumerate}[label=(\roman*),ref=\roman*,leftmargin=2em]
\item\label{item:ca-common-ordering} For every $z_-(\gamma)<z<z_+(\gamma)$, the barriers satisfy $C_6^\flat(x)\leq C_6^\sharp(x)$ and $C_1^\flat(x)\leq C_1^\sharp(x)$ on their respective intervals of definition. The inequalities attain equality, respectively, only at the sonic and shock point.
\item\label{item:ca-common-midpoint-signs} At $z=z_\pm(\gamma)$ we have the inequalities
\begin{equation}
\label{eq:ca-common-midpoint-signs}
 C_1^\flat(\tfrac12;z_-)>C_6^\sharp(\tfrac12;z_-),\qquad
 C_6^\flat(\tfrac12;z_+)>C_1^\sharp(\tfrac12;z_+).
\end{equation}

\item\label{item:ca-common-exponent} The Guderley exponent $\crrb$ satisfies
\begin{equation}
\label{eq:ca-cr-enclosure}
 \tfrac1{1+2\gamma z_+(\gamma)}<\crrb<
 \tfrac1{1+2\gamma z_-(\gamma)},\qquad 1<\gamma\leq\tfrac53.
\end{equation}
\end{enumerate}
\end{lemma}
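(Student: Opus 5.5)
Parts (i) and (ii) are finite sign checks on explicit expressions in $(\gamma,z,x)$, and I would dispatch them by interval arithmetic, as for Lemma~\ref{lem:ca-common-barriers}. Part (iii) is then a topological shooting argument, which I would make precise as follows.

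For (i), the sonic barriers share the Taylor polynomial $T_3$, so
\[
C_6^\sharp(x)-C_6^\flat(x)=\bigl(\tfrac52+\tfrac1{1000}\bigr)|c_3|x^3 .
\]
This is nonnegative and vanishes only at $x=0$, because $c_3\neq0$ by Lemma~\ref{lem:ca-common-barriers}(ii). Similarly, with $h=L(1-x)$,
\[
C_1^\sharp-C_1^\flat=\bigl(\tfrac{X_0}{X_0+h}\bigr)^k\theta(\varepsilon_\sharp-\varepsilon_\flat)h^2 .
\]
Here $\varepsilon_\sharp>0>\varepsilon_\flat$, and we have $\theta>0$, $L>0$ and $X_0+h=X>0$. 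This expression therefore vanishes only at $h=0$, i.e. at the shock $x=1$. So (i) is essentially by hand. For (ii), the two midpoint inequalities are explicit closed-form expressions in $\gamma$ (with $z=z_\pm(\gamma)$ substituted). I would verify them by subdividing $\theta\in(0,\frac2{\sqrt5}]$ and applying interval arithmetic. The delicate end is $\theta\downarrow0$, where both sides degenerate; there I would divide out the common leading order in $\theta$ (using the $\theta$-normalization of $X,C$) and check the rescaled difference on a neighbourhood of $\theta=0$.

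For (iii), fix $\gamma$. For $z\in[z_-,z_+]$, consider the solution $C_{\rm sh}(x;z)$ of the ODE $\frac{dC}{dx}=-\frac{L\theta CN_c}{XN_x}$ issued from the shock point $(X_0,C_0)$ at $x=1$. Consider also the analytic sonic branch $C_{\rm so}(x;z)$ through $P_6$, which exists for every admissible $z$ by~\cite{JLS25b} and Lemma~\ref{lem:ca-common-barriers}(ii). Since $N_x<0$ in the barrier regions, the field is regular there.

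The transversality inequalities~\eqref{eq:ca-common-shock-signs} make $C_1^\flat$ a lower and $C_1^\sharp$ an upper barrier for $C_{\rm sh}$ when integrating from $x=1$ down to $x=\frac12$. The equality of the barriers at $x=1$ requires a short tangency argument: both barriers match $C_{\rm sh}$ to first order, and the second-order term is controlled by $\varepsilon_\flat<0<\varepsilon_\sharp$. Hence $C_1^\flat(\frac12)\le C_{\rm sh}(\frac12)\le C_1^\sharp(\frac12)$. In the same way, \eqref{eq:ca-common-sonic-signs} traps $C_{\rm so}$ between $C_6^\flat$ and $C_6^\sharp$ on $(0,\frac12]$. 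Near $x=0$ the barriers agree with $T_3$ to third order, and $C_{\rm so}$ is analytic, so for small $x$ the comparison follows from the Taylor expansion and the sign of the $x^3$ corrections.

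Now set $F(z):=C_{\rm sh}(\frac12;z)-C_{\rm so}(\frac12;z)$. By (ii), $F(z_-)>0$ and $F(z_+)<0$. $F$ is continuous in $z$: this follows from continuous dependence of regular ODE solutions, together with analytic dependence of the sonic Taylor series on $z$. Hence $F$ vanishes at some $z\in(z_-,z_+)$, and there the shock trajectory connects to $P_6$. By the uniqueness of the Guderley exponent in Theorem~\ref{lem:guderley-profile-monotonicity-decay}, this $z$ is $z(\gamma)$. The strict inequalities in (ii) exclude the endpoints, and $\crrb=1/(1+2\gamma z)$ is decreasing in $z$, which gives~\eqref{eq:ca-cr-enclosure}.

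The step I expect to be the main obstacle is justifying the comparison near the degenerate endpoints $x=0$ and $x=1$, where barrier and solution coincide. I would handle this by comparing the Taylor expansions to the order where the barriers separate, and then applying the strict barrier inequalities on $(0,\frac12)$ and $(\frac12,1)$.
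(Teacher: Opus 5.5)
Your proof matches the paper's on parts (i) and (ii) and on the barrier and intermediate-value structure of (iii). Part (i) uses the same explicit differences $\tfrac{2501}{1000}|c_3|x^3$ and $(X_0/X)^k\theta(\varepsilon_\sharp-\varepsilon_\flat)h^2$. Part (ii) is the same interval-arithmetic check of the midpoint inequalities after dividing by $\theta$. Part (iii) is the same first-touching argument, started from the Taylor expansions at $x=0$ and $x=1$, followed by the intermediate value theorem on $C_{\rm sh}(\tfrac12;z)-C_{\rm so}(\tfrac12;z)$.

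\textbf{Main gap: identifying the root.} The step that does not follow as written is the identification of the root with the Guderley parameter. Theorem~\ref{lem:guderley-profile-monotonicity-decay} gives uniqueness only among exponents for which the shock data generate an analytic solution on all of $[1,\infty)$ with the stated monotonicity and decay. Your root $z^\ast\in(z_-,z_+)$ only yields a trajectory from the shock point to $P_6$ along the analytic branch. You have not shown that this trajectory continues past $P_6$ to the origin with those properties. So the global uniqueness statement cannot be applied to $z^\ast$.

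What you need is uniqueness of $z$ for the shock-to-$P_6$ connection itself. This is Lemma~5.4 of \cite{JLS25b}, which the paper invokes. The Guderley $z(\gamma)$ is one such connection, so that lemma gives $z^\ast=z(\gamma)$ and hence the enclosure.

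\textbf{Smaller point: continuity in $z$.} $N_x$ and $N_c$ both vanish at $P_6$, so $x=0$ is a singular point of the ODE. Continuity of $z\mapsto C_{\rm so}(\tfrac12;z)$ therefore does not follow from analyticity of the individual Taylor coefficients plus regular ODE dependence. You need the analytic branch to exist on an $x$-interval that is locally uniform in $z$, and to depend continuously on $z$ there. That is what Theorem~3.8 and Remark~3.9 of \cite{JLS25b} provide; after that, regular ODE dependence carries the branch to $x=\tfrac12$.
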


\begin{proof}
Part~(\ref{item:ca-common-ordering}). Subtracting the explicit formulas~\eqref{eq:ca-common-sonic-barriers} and~\eqref{eq:ca-common-shock-barriers} gives
\begin{equation*}
\begin{aligned}
 C_6^\sharp(x)-C_6^\flat(x)
 &=\tfrac{2501}{1000}|c_3|x^3,
\\
 C_1^\sharp(x)-C_1^\flat(x)
 &=\left(\tfrac{X_0}{X_0+h}\right)^k\theta h^2
 \left[\tfrac{(1-\theta)^5}{2}+\tfrac{\theta^2}{32}+\tfrac{11}{10000}\right],
 \qquad h=L(1-x).
\end{aligned}
\end{equation*}
By Lemma~\ref{lem:ca-common-barriers}, $c_3\ne0$ and $L>0$. Moreover, $0<\theta\leq\frac2{\sqrt5}<1$ and $X_0>0$. Hence both differences have the claimed signs, with equality precisely at $x=0$ and $x=1$, respectively.

Part~(\ref{item:ca-common-midpoint-signs}). The inequalities~\eqref{eq:ca-common-midpoint-signs} are verified by a computer-assisted argument; see Section~\ref{subsec:ca-computational-proofs} for details.

Part~(\ref{item:ca-common-exponent}). Fix $\gamma$ and pick $z$ in $[z_-(\gamma),z_+(\gamma)]$. For each such value, let $\calC_1(x;z)$ denote the solution starting at the shock and $\calC_6(x;z)$ the analytic solution starting at the sonic point with slope $c_1$. The latter exists and depends continuously on $z$ by Theorem~3.8 and Remark~3.9 in~\cite{JLS25b}.

 By~\eqref{eq:ca-common-sonic-Taylor} and~\eqref{eq:ca-common-sonic-barriers}, and since $c_3\ne0$ by Lemma~\ref{lem:ca-common-barriers}(\ref{item:ca-common-qualitative}), we have $\calC_6-C_6^\flat>0$ and $C_6^\sharp-\calC_6>0$ for all sufficiently small $x>0$. Suppose that $\calC_6$ first touches $C_6^\flat$ at a point in $(0,\frac12)$. At this point, the equations and~\eqref{eq:ca-common-sonic-signs} give
\begin{equation*}
0\geq\tfrac{\dd}{\dd x}(\calC_6-C_6^\flat)=-\tfrac{L\theta C_6^\flat N_c[X,C_6^\flat]}{X N_x[X,C_6^\flat]}-\tfrac{\dd C_6^\flat}{\dd x}>0,
\end{equation*}
a contradiction. The same argument, with opposite sign, applies to $C_6^\sharp$.

For $\calC_1$, we argue in the same way using~\eqref{eq:ca-shock-Taylor-coefficients}, \eqref{eq:ca-common-shock-barriers}, and~\eqref{eq:ca-common-shock-signs}. We have thus proved
\begin{equation*}
 C_6^\flat(x;z)\leq\calC_6(x;z)\leq C_6^\sharp(x;z),
 \qquad 0\leq x\leq\tfrac12,
\end{equation*}
\begin{equation*}
 C_1^\flat(x;z)\leq\calC_1(x;z)\leq C_1^\sharp(x;z),
 \qquad \tfrac12\leq x\leq1.
\end{equation*}
At $z=z_-(\gamma)$, the barrier inequality and the first inequality in~\eqref{eq:ca-common-midpoint-signs} give
\begin{equation*}
 \calC_1(\tfrac12;z_-)
 \geq C_1^\flat(\tfrac12;z_-)
 > C_6^\sharp(\tfrac12;z_-)
 \geq\calC_6(\tfrac12;z_-),
\end{equation*}
whereas at $z=z_+(\gamma)$, the second one gives
\begin{equation*}
 \calC_1(\tfrac12;z_+)
 \leq C_1^\sharp(\tfrac12;z_+)
 < C_6^\flat(\tfrac12;z_+)
 \leq\calC_6(\tfrac12;z_+).
\end{equation*}
Since the values $\calC_1(\frac12;z)$ and $\calC_6(\frac12;z)$ depend continuously on $z$ (see Remark~3.9 and the argument following equation~(4.3) in~\cite{JLS25b}), we have that the continuous function $z\mapsto\calC_1(\frac12;z)-\calC_6(\frac12;z)$ changes sign. The intermediate value theorem, as in the argument following equation~(4.3) in~\cite{JLS25b}, gives a parameter $z\in(z_-(\gamma),z_+(\gamma))$ for which
\begin{equation*}
 \calC_1(\tfrac12;z)=\calC_6(\tfrac12;z).
\end{equation*}

The uniqueness of $\crrb$ (and hence of $z$) for the shock-to-$P_6$ connection, proved in Lemma~5.4 of~\cite{JLS25b}, identifies this parameter with that of the Guderley profile. Thus
\begin{equation}
\label{eq:ca-z-enclosure}
 z_-(\gamma)<z<z_+(\gamma).
\end{equation}
The identity $z=(\crrb^{-1}-1)/(2\gamma)$, solved for $\crrb$, gives~\eqref{eq:ca-cr-enclosure}.
\end{proof}

Now we combine the barrier inequalities and the bounds just obtained on $z$ to enclose the Guderley profile $C(x)$.

\begin{lemma}
\label{lem:ca-common-profile}
For every $1<\gamma\leq\frac53$ and the corresponding Guderley similarity exponent $\crrb$, we have the inequalities
\begin{subequations}
\label{eq:ca-profile-enclosure}
\begin{align}
 C_6^\flat(x)&\leq C(x)\leq C_6^\sharp(x),&&0\leq x\leq\tfrac12,
 \label{eq:ca-profile-sonic-half}\\
 C_1^\flat(x)&\leq C(x)\leq C_1^\sharp(x),&&\tfrac12\leq x\leq1.
 \label{eq:ca-profile-shock-half}
\end{align}
\end{subequations}
\end{lemma}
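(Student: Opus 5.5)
The statement is essentially already established inside the proof of Lemma~\ref{lem:ca-common-parameter}; the plan is to specialize that argument to the true Guderley parameter. By Lemma~\ref{lem:ca-common-parameter}(\ref{item:ca-common-exponent}), more precisely~\eqref{eq:ca-z-enclosure}, the Guderley value $z=(\crrb^{-1}-1)/(2\gamma)$ lies in $(z_-(\gamma),z_+(\gamma))$. Therefore all conclusions of Lemma~\ref{lem:ca-common-barriers} hold at this $z$: the field signs $N_x<0$ and $D<0$ inside the barrier regions, and the strict sub/supersolution inequalities~\eqref{eq:ca-common-sonic-signs} and~\eqref{eq:ca-common-shock-signs}.

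Next, identify the Guderley curve $C(x)$ with both local solutions. By Theorem~\ref{lem:guderley-profile-monotonicity-decay}, the profile runs from the Rankine--Hugoniot point to $P_6$, and $\Vbar$ is monotone. Hence $x$ is a valid parameter on $[0,1]$, and $C$ solves $\frac{\dd C}{\dd x}=-\frac{L\theta CN_c}{XN_x}$. Near $x=1$ it is the solution $\calC_1$ issued from $(X_0,C_0)$. Near $x=0$ it is the analytic branch through $P_6$ with slope $c_1$, that is, $\calC_6$; this uses the analyticity in Theorem~\ref{lem:guderley-profile-monotonicity-decay} and the uniqueness of the analytic expansion from~\cite{JLS25b}, which selects the root $\eta_1<0$. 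So it suffices to prove $C_6^\flat\le\calC_6\le C_6^\sharp$ on $[0,\frac12]$ and $C_1^\flat\le\calC_1\le C_1^\sharp$ on $[\frac12,1]$.

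These bounds follow from the first-touching argument already written in the proof of Lemma~\ref{lem:ca-common-parameter}.

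\textbf{Sonic half.} The barriers agree with $T_3$ up to a cubic correction $\mp\kappa|c_3|x^3$, while $C-T_3=O(x^4)$. Hence for small $x>0$ the strict inequalities $C_6^\flat<C<C_6^\sharp$ hold. At a first touching point $x_0\in(0,\frac12)$ with $C_6^\flat$, one has $\frac{\dd}{\dd x}(C-C_6^\flat)(x_0)\le0$. But $N_x<0$ at that point by Lemma~\ref{lem:ca-common-barriers}(\ref{item:ca-common-field-signs}), so~\eqref{eq:ca-common-sonic-signs} forces this derivative to be positive, a contradiction. The same argument with reversed signs handles $C_6^\sharp$. Continuity then extends the bounds to $x=\frac12$.

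\textbf{Shock half.} Pass to $S=(X/X_0)^kC$. At $h=0$ the barriers match $S$ to first order, and their $h^2$ coefficients are $a_2+\varepsilon_\flat<a_2<a_2+\varepsilon_\sharp$. Hence the ordering holds strictly for small $h>0$, that is, for $x$ slightly below $1$. Now run the touching argument backward in $x$ from $1$ to $\frac12$ using~\eqref{eq:ca-common-shock-signs}. The orientation must be tracked carefully: since the argument moves in decreasing $x$, the inequality signs are reversed compared to the sonic half, which is consistent with~\eqref{eq:ca-common-shock-signs}.

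I expect the main obstacle to be the identification step rather than the comparison. One must check that the Guderley trajectory really is the analytic branch $\calC_6$ with $\eta_1<0$, and not the other sonic branch. The remaining technical point is the endpoint degeneracy: at $x=0$ the denominator $D$ vanishes, so the comparison must start at small $x>0$ via the Taylor expansion, which is what the $x^3$ margin in~\eqref{eq:ca-common-sonic-barriers} is designed for.
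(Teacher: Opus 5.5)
Your proposal is correct and follows the paper's proof. The paper places the Guderley value of $z$ in $(z_-,z_+)$ via Lemma~\ref{lem:ca-common-parameter} and then reruns that lemma's first-touching comparison, using parts (\ref{item:ca-common-field-signs})--(\ref{item:ca-common-shock-signs}) of Lemma~\ref{lem:ca-common-barriers}. Your identification of the profile with $\calC_1$ and $\calC_6$, and your sign bookkeeping for the backward comparison on the shock half, make explicit what the paper leaves implicit.
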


\begin{proof}
By Lemma~\ref{lem:ca-common-parameter}, the Guderley exponent lies in the associated interval. Then, the barrier inequalities in Lemma~\ref{lem:ca-common-barriers}(\ref{item:ca-common-field-signs})--(\ref{item:ca-common-shock-signs}) immediately imply the conclusion of the Lemma by using the argument used in the proof of Lemma~\ref{lem:ca-common-parameter}.
\end{proof}

\begin{figure}[ht]
\centering
\includegraphics{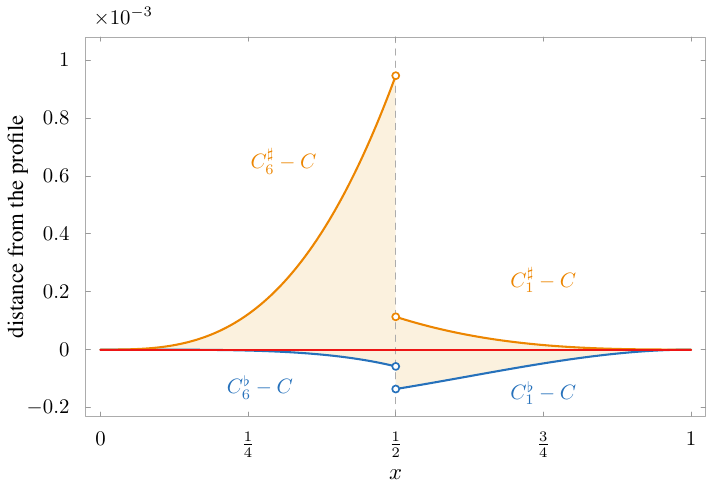}
\caption{\textbf{The Guderley profile enclosure for $d=3$ and $\gamma=\frac53$.} The horizontal axis represents $x$, and the vertical axis represents the barrier difference from the Guderley profile $C$. The red line corresponds to zero distance. The blue and orange curves show the lower and upper barrier differences $C_6^\flat-C$ and $C_6^\sharp-C$ on $0\leq x\leq\frac12$, and $C_1^\flat-C$ and $C_1^\sharp-C$ on $\frac12\leq x\leq1$, obtained from~\eqref{eq:ca-common-sonic-barriers} and~\eqref{eq:ca-common-shock-barriers}. The dashed line marks the midpoint $x=\frac{1}{2}$.}
\label{fig:guderley-profile-enclosure}
\end{figure}

\begin{remark}
This lemma constitutes the backbone of the computer-assisted argument, since it enables us to verify inequalities along the Guderley profile. Indeed, for each fixed $\gamma$, to prove an inequality of the form $F(z,x,C(x))>0$, it suffices to verify $F(z,x,C)>0$ for every $z_-<z<z_+$ and every $(x,C)$ satisfying
\begin{equation*}
\begin{aligned}
 C_6^\flat(x)\leq C\leq C_6^\sharp(x),
 &\qquad 0\leq x\leq\tfrac12,\\
 C_1^\flat(x)\leq C\leq C_1^\sharp(x),
 &\qquad \tfrac12\leq x\leq1.
\end{aligned}
\end{equation*}
In particular, the barriers and the $z_\pm$ are explicit functions that can be bounded using interval arithmetic.
\end{remark}

The next Lemma records explicit bounds, in terms of $x$, of the logarithm of the radius $\log R$, which appears in the expressions~\eqref{eq:profile-row-quantities}.

\begin{lemma}
\label{lem:ca-common-radius}
For every $1<\gamma\leq\frac53$ the following hold.

For the Guderley similarity exponent, let $R^\sharp$ be the function defined in Section~\ref{subsec:ca-radius-formulas}. Then
\begin{equation}
\label{eq:ca-logR-bound}
 0\leq\log R\leq R^\sharp(X),\qquad 1\leq R\leq\Rs.
\end{equation}
\end{lemma}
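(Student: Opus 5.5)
The bound on $\log R$ will come from writing $\log R$ as an integral over the phase-portrait variable. Since $\Vbar$ is strictly increasing on $[1,\Rs]$ (Theorem~\ref{lem:guderley-profile-monotonicity-decay}), and hence so is $X=(\crrb+\Vbar)/(\crrb\theta)$, we may use $X$ as a coordinate on $[X_0,X_6]$. Then \eqref{eq:guderley:V-autonomous} gives
\begin{equation*}
\log R=\int_{X_0}^{X}\frac{\dd X'}{R\partial_R X'}=\int_{X_0}^{X}\frac{\crrb\theta\,\Delta[\Vbar,\Qbar]}{\Delta_{\Vbar}[\Vbar,\Qbar]}\,\dd X'.
\end{equation*}
Rewriting in normalized variables, the integrand becomes an explicit rational function $g(X,C;z,\theta)$. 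Schematically it is $X\,D(X,C)$ divided by a multiple of $X N_x[X,C]$, up to explicit positive factors.

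Lower bound and sign. Nonnegativity of $\log R$ is immediate from $R\geq1$. We also need $g>0$. Along the profile we have $D<0$ and $N_x<0$ by Lemma~\ref{lem:ca-common-barriers}(\ref{item:ca-common-field-signs}), combined with the enclosure of Lemma~\ref{lem:ca-common-profile}.

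Upper bound. We define $R^\sharp(X)$ by replacing $g(X',C(X'))$ with its supremum over the admissible region $\{C^\flat\leq C\leq C^\sharp,\ z_-\leq z\leq z_+\}$. Since $D$ and $N_x$ are affine in $C^2$, this supremum is attained at a barrier endpoint, so it can be evaluated explicitly. We then integrate an explicit majorant in closed form, for instance polynomial plus logarithmic terms, which yields the elementary formula of Section~\ref{subsec:ca-radius-formulas}. The inequality $\log R\leq R^\sharp(X)$ then follows from monotonicity of the integral. Positivity of the majorant and the comparison itself are certified by interval arithmetic in $(\theta,z,x)$, exactly as for the other barrier checks.

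Main obstacle. The difficulty is the sonic endpoint $X\to X_6$. There both $\Delta$ and $\Delta_{\Vbar}$ vanish, so the integrand is of the form $0/0$. By the analyticity at $P_6$ (Theorem~3.8 in~\cite{JLS25b}), the ratio has a finite limit, namely $1/(\Rs\partial_R\Vbar(\Rs))$ up to normalization. To handle this, I would factor $(X-C)$ out of both numerator and denominator, using $C-X=O(x)$ on the sonic barriers, and verify that the reduced denominator stays bounded away from zero on $0\leq x\leq\frac12$. The integrand bound then extends to $x=0$, and $R^\sharp$ remains finite up to the sonic point. A secondary issue is uniformity as $\theta\downarrow0$, since there $L$ and $X_0$ degenerate. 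I would handle this by tracking the explicit powers of $\theta$ in the bound, as was done for the shock barriers.
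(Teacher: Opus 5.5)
Your proposal is correct and follows essentially the paper's route. The signs $N_x<0$, $D<0$ on the barrier enclosure give $\frac{\dd\log R}{\dd X}=\theta D/N_x>0$. The upper bound is then a pointwise comparison of this integrand with $\partial_XR^\sharp$ on the enclosure, integrated from the shock, with the sonic factor $x$ cancelled from $N_x$ and $D$ and the $\theta\downarrow0$ degeneracy handled separately.

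One correction of framing: $R^\sharp$ is prescribed in advance, not obtained by integrating a supremum in closed form. What must be certified is therefore $\theta^{-1}\partial_XR^\sharp>D/N_x$ along the enclosure, together with $R^\sharp(X_0)=0$, which is exactly what the paper checks.
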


\begin{proof}

By Lemmas~\ref{lem:ca-common-barriers}(\ref{item:ca-common-field-signs}) and~\ref{lem:ca-common-profile}, we have $N_x<0$ and $D<0$ along the Guderley profile before the sonic point. Hence the equation~\eqref{eq:guderley:V-autonomous} gives
\begin{equation*}
 \tfrac{\dd\log R}{\dd X}
 =\theta\tfrac{D(X,C(X))}{N_x[X,C(X)]}>0,
 \qquad X_0\leq X<X_6.
\end{equation*}
By a computer-assisted argument (see Sections~\ref{subsec:ca-radius-formulas} and~\ref{subsec:ca-computational-proofs}), we verify the inequality
\begin{equation}
\label{eq:ca-S-margin}
 \tfrac1\theta\tfrac{\dd R^\sharp}{\dd X}
 -\tfrac{D(X,C(X))}{N_x[X,C(X)]}>0,
 \qquad X_0\leq X<X_6.
\end{equation}
 Since $\theta>0$, the inequality implies $0<\frac{\dd\log R}{\dd X}<\frac{\dd R^\sharp}{\dd X}$. At the shock, $R=1$ and $R^\sharp(X_0)=0$; hence integration from $X_0$ gives~\eqref{eq:ca-logR-bound} for $1 \le R \le \Rs$.
\end{proof}

\begin{proposition}
\label{prop:ca-background-signs}
For every $1<\gamma\leq\frac53$, the regular $P_6$ Guderley profile in dimension $d=3$ satisfies
\begin{equation}
\label{eq:ca-background-signs}
 \overline{\rZ}>0,\qquad \overline{\rW}>0,\qquad
 -\ell_{ZA}>0,\qquad \ell_{ZW}>0,\qquad -\ell_{WZ}>0,
 \qquad 1\leq R\leq\Rs.
\end{equation}
\end{proposition}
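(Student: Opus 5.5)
The plan is to reduce each of the five signs in \eqref{eq:ca-background-signs} to the sign of an explicit algebraic (or algebraic-plus-$\log$-free) expression in the normalized phase variables $(X,C)$ and the parameter $z$, and then to verify that sign on the enclosure of Lemma~\ref{lem:ca-common-profile} by interval arithmetic. Concretely, the background differentiated variables are $\overline{\rZ}=R\partial_R\Vbar-R\partial_R\Qbar+\frac1\gamma\Qbar R\partial_R\Kbar$, $\overline{\rW}=R\partial_R\Vbar+R\partial_R\Qbar-\frac1\gamma\Qbar R\partial_R\Kbar$, and $\overline{\rA}$ is given by \eqref{eq:profile-sign-A}. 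Using the autonomous system \eqref{eq:guderley:VQ-autonomous} and $R\partial_R\Kbar=-\frac{1+\Vbar}{\crrb+\Vbar}$, each of $\overline{\rZ},\overline{\rA},\overline{\rW}$ becomes a rational function of $(\Vbar,\Qbar)$ with denominator $\Delta$ (times $\crrb+\Vbar$); substituting \eqref{eq:profile-background-coefficients}, the same holds for $\ell_{ZA},\ell_{ZW},\ell_{WZ}$. Passing to $(X,C,z,\theta)$ via \eqref{eq:ca-normalization}, the denominators are, up to positive factors, $X N_x$ and $X D$, whose signs are fixed by Lemma~\ref{lem:ca-common-barriers}(\ref{item:ca-common-field-signs}) ($N_x<0$, $D<0$, $X>0$) on $0<x\le1$. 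So each target becomes ``numerator polynomial $P_j(X,C;z,\theta)$ has a definite sign'' on the region $\{z_-\le z\le z_+,\ C^\flat(x)\le C\le C^\sharp(x)\}$.

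Key steps in order: (1) derive the explicit formulas for the six quantities in $(X,C)$, clearing the common positive/negative denominators; (2) for fixed-$\gamma$ boxes (a subdivision of $\theta\in(0,2/\sqrt5]$) and subdivisions of $x\in[0,1]$ and $z\in[z_-,z_+]$, evaluate $P_j$ in interval arithmetic with $C$ ranging over the barrier interval, using that $P_j$ is polynomial in $C$ (often in $C^2$) so it suffices to bound over an interval; (3) invoke Lemma~\ref{lem:ca-common-profile} to transfer the sign from the box to the actual profile, and Lemma~\ref{lem:ca-common-parameter} to justify the range of $z$. Note that $R$ itself does not enter these five quantities, so Lemma~\ref{lem:ca-common-radius} is not needed here.

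The main obstacles are the two degenerate endpoints. At the sonic point $x=0$, $\Delta=0$ and $R\partial_R\Vbar,R\partial_R\Qbar$ are $0/0$; there I would not use the quotient form but instead the Taylor data: $R\partial_R\Vbar(\Rs)$ and $R\partial_R\Qbar(\Rs)=\eta_1R\partial_R\Vbar(\Rs)$ are explicit, so on $0\le x\le x_0$ I would write $R\partial_R\Qbar/R\partial_R\Vbar=\mathcal Q'(\Vbar)$ bounded via the derivatives of the barriers $C_6^\flat,C_6^\sharp$ (or the cubic Taylor polynomial plus remainder) and check the signs at the sonic point symbolically, then on a small cap by continuity-type interval bounds. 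The second obstacle is uniformity as $\gamma\downarrow1$ ($\theta\to0$), where $\alpha\to0$ and several terms degenerate; I would factor out the appropriate powers of $\theta$ from each $P_j$ (in the normalized variables, $\Qbar$ is $O(\crrb\theta/\alpha)$ while $\crrb+\Vbar=O(\theta)$), so that the rescaled polynomial has a nonvanishing limit at $\theta=0$, and then verify its sign on the closed range $\theta\in[0,2/\sqrt5]$, including a dedicated check of the $\theta=0$ limit.
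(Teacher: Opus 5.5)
Your overall route is the paper's: express the five quantities through the phase-portrait identities in $(X,C)$, then check their signs by interval arithmetic over the $z$-enclosure of Lemma~\ref{lem:ca-common-parameter} and the profile enclosure of Lemma~\ref{lem:ca-common-profile}. You are also right that Lemma~\ref{lem:ca-common-radius} plays no role.

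\textbf{Main gap: the sonic cap.} Your treatment of $0\le x\le x_0$ does not work, and this region cannot be skipped. The statement includes $R=\Rs$, where $R\partial_R\Vbar=\crrb N_x/D$ and $R\partial_R\Qbar=\crrb\theta CN_c/(\alpha XD)$ are of the form $0/0$. Each of your three proposed tools fails there.
\begin{enumerate}
\item A pointwise enclosure $C_6^\flat\le C\le C_6^\sharp$ gives no information on $C'$, i.e.\ on $\mathcal Q'(\Vbar)$, for $x>0$; the barriers pin down $C'$ only at $x=0$. So bounding $\mathcal Q'$ ``via the derivatives of the barriers'' is not justified.
\item The cubic Taylor polynomial plus remainder needs a rigorous remainder estimate for the sonic series of~\cite{JLS25b}. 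Nothing in the paper supplies one.
\item Extending the signs from $x=0$ to a cap by continuity needs a quantitative modulus of continuity for $\overline{\rZ},\overline{\rW}$ near $\Rs$. That is second-derivative control of the profile, which you also lack.
\end{enumerate}

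The missing device is to never differentiate $C$ at all. Substitute $C=C_6^\flat+\omega(C_6^\sharp-C_6^\flat)$ with $\omega\in[0,1]$. For fixed $\omega$ this is a cubic in $x$ equal to $X_6$ at $x=0$. Hence $N_x$, $N_c$ and $D$ evaluated at $(X_6-Lx,C)$ are polynomials in $(x,\omega)$ vanishing at $x=0$, because $P_6$ is a triple point. Divide by $x$ symbolically and evaluate $N_x/D$ and $N_c/D$ as $(N_x/x)/(D/x)$ and $(N_c/x)/(D/x)$, where $-D/x$ is bounded below by a positive constant. The true profile corresponds to some $\omega(x)\in[0,1]$, so interval evaluation over $\omega$ is rigorous uniformly down to and including $x=0$. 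No separate cap argument is needed.

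\textbf{Secondary caveat: the limit $\gamma\downarrow1$.} Of the five quantities only $\ell_{ZW}$ degenerates, but not like a fixed power of $\theta$. Your heuristic $\crrb+\Vbar=O(\theta)$ is not uniform: $\crrb+\Vbar=\crrb\theta X$, and $X$ runs from $X_0=\theta/(4-\theta^2)$ at the shock to $X_6\approx\frac12$ at the sonic point. So $\ell_{ZW}$ is of size $\theta/X$: order one at the shock and $O(\theta)$ near $\Rs$.
\begin{enumerate}
\item The quantity to check is $X\ell_{ZW}/\theta$. Treat $\theta/X\in[0,4]$ as an independent bounded coordinate, since it has no joint limit.
\item On the shock half, the rescaled expression involves $(C-X_6)/\theta$. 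This contains $\log\theta$ and $\log X$ terms, coming from $X_6=\frac12+\frac{\theta\log\theta}{6}+\theta r$ and from the factor $(X_0/X)^k$ in $C_1^\flat,C_1^\sharp$. It is unbounded near the shock as $\theta\to0$.
\end{enumerate}
So there is no clean nonvanishing polynomial limit at $\theta=0$ to check. The first parameter box has to be handled with explicit enclosures of these logarithmic terms, as the paper does.
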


\begin{proof}
We use the phase portrait equations to write the quantities in~\eqref{eq:ca-background-signs} explicitly in terms of $x$ and $C$. We then use the parameter and profile bounds of Lemmas~\ref{lem:ca-common-parameter} and~\ref{lem:ca-common-profile} to verify their signs by a computer-assisted argument; see Sections~\ref{subsec:ca-reconstruction} and~\ref{subsec:ca-computational-proofs} for details.
\end{proof}

Finally, we can prove Lemma~\ref{lem:three-profile-inequalities}. 

\begin{lemma}
\label{lem:ca-common-stability}
For every $1<\gamma\leq\frac53$, consider the regular $P_6$ Guderley profile in dimension $d=3$. The functions $G_A$, $G_W$, and $\Psi_Z$ defined in~\eqref{eq:profile-gain-equations} and~\eqref{eq:profile-row-quantities} satisfy
\begin{equation*}
 \sup_{1\leq R\leq\Rs}G_A(R)<1-\alpha,\qquad
 \sup_{1\leq R\leq\Rs}G_W(R)<\tfrac{11}{10},\qquad
 \sup_{1\leq R\leq\Rs}\Psi_Z(R)<0.
\end{equation*}
\end{lemma}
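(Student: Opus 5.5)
The plan is to turn each of the three inequalities into a statement about explicit functions on the enclosure region from Lemma~\ref{lem:ca-common-profile}, and then check those statements with interval arithmetic. I will parametrize the profile on $1\le R\le\Rs$ by $x\in[0,1]$, which is legitimate because $\Vbar$ is monotone and $\frac{\dd\log R}{\dd X}>0$ (proof of Lemma~\ref{lem:ca-common-radius}).

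\textbf{Profile quantities in phase-portrait variables.} Using \eqref{eq:ring-inverse}, \eqref{eq:profile-sign-A} and the autonomous system \eqref{eq:guderley:VQ-autonomous}, I will write $R\partial_R\Vbar$ and $R\partial_R\Qbar$ as the rational functions $\Delta_{\Vbar}/\Delta$ and $\Delta_{\Qbar}/\Delta$. From these, $\overline{\rZ},\overline{\rA},\overline{\rW}$ and all coefficients \eqref{eq:profile-background-coefficients} become rational functions of $(X,C,z,\theta)$.

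The forcing vectors $\mathcal F_V,\mathcal F_Q$ also contain $R\partial_R\Ybar$. I will handle this by differentiating these rational functions along the flow, i.e. applying the chain rule with $R\partial_R X$ and $R\partial_R C$ given again by the field. This gives $\mathcal J_i$ as an explicit (absolute-value) expression in $(X,C,z,\theta)$.

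The term $\log R$ enters $\mathcal R_A,\mathcal R_W,\Psi_Z$ with nonnegative coefficient $\mathcal J_i$. So it suffices to replace $\log R$ by the upper bound $R^\sharp(X)$ from Lemma~\ref{lem:ca-common-radius}.

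\textbf{The inequality for $\Psi_Z$.} The claim $\Psi_Z<0$ is then a pointwise inequality $F(z,x,C)<0$. By the Remark after Lemma~\ref{lem:ca-common-profile}, I will verify it on boxes covering $\{z_-<z<z_+\}\times\{(x,C)\text{ in the barrier band}\}$, for $\gamma$ in a cover of $(1,\frac53]$ expressed through $\theta$.

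\textbf{The inequalities for $G_A$ and $G_W$.} These are integral, not pointwise. I will change variables $R\mapsto x$ in \eqref{eq:profile-gain-formulas}, using $\frac{\dd\log R}{\dd x}=-L\theta D/N_x$, which is positive-signed by Lemma~\ref{lem:ca-common-barriers}(\ref{item:ca-common-field-signs}). This turns $G_A,G_W$ into solutions of linear ODEs in $x$ whose coefficients $\ell_{AA}/\LA$, $\mathcal R_A/\LA$ (and the $W$ analogues) are enclosed by intervals on each $x$-subinterval. I will then propagate rigorous upper bounds stepwise from $x=1$ (the shock, with data $b_A(\gamma)$ and $-b_W(\gamma)$) down to $x=0$, using a comparison/Grönwall step on each cell:
\[
G(x_{k+1})\le G(x_k)e^{\sup a\,\Delta}+\sup b\,\tfrac{e^{\sup a\,\Delta}-1}{\sup a}.
\]
The signs from Lemma~\ref{lem:profile-signs} (so that $\mathcal R_W>0$ and $G_W>0$) keep the bounds one-sided. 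Finally I will check that the accumulated upper bound stays below $1-\alpha$ and $\frac{11}{10}$ respectively.

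\textbf{Main obstacle.} The hardest part is uniformity as $\gamma\downarrow1$ ($\theta\to0$). Several coefficients, for example $b_A,b_W$, $L$ and the Taylor data, degenerate in that limit, and the margin in the $G_A$ inequality against $1-\alpha\to1$ may shrink. There are also the singular factors at the sonic end $x=0$, where $\LZ$ vanishes and $D\to0$.

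For the sonic end, I will first check that the integrands stay bounded there. Only $\LA$ and $\LW$ appear in denominators, and these are bounded below by $\alpha\crrb/(1+\alpha)$. For the $\theta\to0$ limit, I would try expanding the relevant expressions in $\theta$, using the normalized variables of \eqref{eq:ca-normalization}. I would verify the inequalities on the limiting system at $\theta=0$ with an explicit remainder, and handle $\theta\ge\theta_0$ by direct interval subdivision.
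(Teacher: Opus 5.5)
Your reduction for $\Psi_Z$ matches the paper: replace $\log R$ by $R^\sharp$ using $\mathcal J_Z\ge0$, then check pointwise on the enclosure. The gap is in $G_A$ and $G_W$ near $\gamma=1$. You name this as the main obstacle, but the remedy you sketch does not work.

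In normalized variables $\LA=\crrb\theta X$ with $X\ge X_0=\theta/(4-\theta^2)$. So the lower bound $\alpha\crrb/(1+\alpha)=\crrb\theta^2/(4-\theta^2)$ you invoke is not uniform. In $x$, the $G_A$ equation reads $\frac{\dd G_A}{\dd x}=-\frac{LD}{\crrb XN_x}\bigl(\ell_{AA}G_A+\mathcal R_A\bigr)$. Here $\ell_{AA}D/N_x$ is of order one, while $\mathcal R_A$ is of order $\theta/X$ (through $\ell_{AZ}$, $\overline{\rA}$ and $\log R\,\mathcal J_A$).

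This is a boundary layer of width $O(\theta)$ at the shock. On a parameter box containing $\theta=0$, the cell enclosures of both coefficients are unbounded as $x\to1$. Finitely many Gr\"onwall cells cannot cover it, and any cutoff $\theta\ge\theta_0$ leaves $(0,\theta_0)$ unproved. A regular expansion in $\theta$ at fixed $x$ misses the layer. The limiting problem is also degenerate: $(1-\alpha)-b_A(\gamma)=\frac{\theta(6-\theta-4\theta^2)}{2(1+\theta)(2-\theta^2)}\to0$, so at $\theta=0$ you get equality at the shock, not strict inequality.

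The missing idea is comparison with explicit supersolutions. This turns the integral bounds into pointwise differential inequalities, which can be checked uniformly down to $\theta=0$ using $\theta/X$, $\xi=(1-x)/\theta$, $\nu=\theta/(1-x)$ as coordinates.
\begin{itemize}
\item For $G_A$ the constant $1-\alpha$ suffices, given $b_A<1-\alpha$ and $\ell_{AA}(1-\alpha)+\widetilde{\mathcal R}_A<0$. Near the shock, where the margin is $O(\theta)$, your cellwise bound with negative growth coefficient is a convex combination of the previous value and $-\sup b/\sup a$. So it effectively needs this same inequality.
\item For $G_W$ the paper uses $H(\theta,x)$ from \eqref{eq:ca-common-W-height}. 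It satisfies $H(\theta,1)=-b_W(\gamma)$ and $H\le\frac{549}{500}$, and its term $\frac{53}{200}\theta x\log(1+2(1-x)/\theta)$ absorbs the layer's contribution. The paper then verifies \eqref{eq:ca-common-W-residual}.
\end{itemize}

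A second step fails as stated at the sonic end. $\LA$ and $\LW$ are not the only denominators. $\overline{\rZ}$ and $\overline{\rW}$ contain $N_x/D$ and $N_c/D$, which are $0/0$ at $x=0$ and need deflation by $x$. Also, $R\partial_R\Ybar$ enters every $\mathcal J_i$. If you compute it by the chain rule through the field, as you propose, then $\partial_X\Ybar$ and $\partial_C\Ybar$ are of size $1/x$ on the barrier band, because $N_x/D$ is direction-dependent at $P_6$. The bounded value along the profile appears only through cancellation.

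So naive interval enclosures of $\mathcal J_i$ grow like $1/x$. Since they are multiplied by $R^\sharp$, which does not vanish at the sonic point, your stepwise bound blows up as $x\to0$. The paper handles this with \eqref{eq:ca-sonic-derivative-correction}. That identity writes $R\partial_R\Ybar$ as a derivative along the cubic curve \eqref{eq:ca-omega-representation} plus a correction proportional to $\mathfrak I[C]/(\theta x^3)$.

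A minor point: $\frac{\dd\log R}{\dd x}=-L\theta D/N_x$ is negative, not positive, since $x=1$ is the shock.
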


\begin{proof}
We prove the three bounds in the order stated. We denote by $\widetilde{\mathcal R}_A$, $\widetilde{\mathcal R}_W$, and $\widetilde{\Psi}_Z$ the expressions obtained from $\mathcal R_A$, $\mathcal R_W$, and $\Psi_Z$ in~\eqref{eq:profile-row-quantities} by replacing $\log R$ with $R^\sharp$.

1. The $A$ inequality. We use the constant function $1-\alpha$ as an upper barrier for $G_A$. By Lemma~\ref{lem:ca-common-radius} and the nonnegativity of $\mathcal J_A$, we have $\mathcal R_A\leq\widetilde{\mathcal R}_A$. It therefore suffices to verify
\begin{subequations}
\begin{align}
 \ell_{AA}(1-\alpha)+\widetilde{\mathcal R}_A&<0,
 \label{eq:ca-A-margin}\\
 b_A(\gamma)&<1-\alpha.
 \label{eq:ca-A-shock-margin}
\end{align}
\end{subequations}
Indeed, subtracting the constant $1-\alpha$ in~\eqref{eq:profile-gain-A} and using $\LA>0$ from~\eqref{eq:out:strat:aw}, we obtain
\begin{equation*}
 \tfrac{\dd}{\dd R}\bigl(G_A(R)-(1-\alpha)\bigr)
 <\tfrac{\ell_{AA}(R)}{R\LA(R)}\bigl(G_A(R)-(1-\alpha)\bigr).
\end{equation*}
Multiplying by the integrating factor $e^{-\int_1^R\frac{\ell_{AA}(s)}{s\LA(s)}\,\dd s}$ and integrating from the shock gives
\begin{equation*}
 G_A(R)-(1-\alpha)
 \leq\bigl(b_A(\gamma)-(1-\alpha)\bigr)
 e^{\int_1^R\frac{\ell_{AA}(s)}{s\LA(s)}\,\dd s}<0,
 \qquad 1\leq R\leq\Rs.
\end{equation*}
The inequality~\eqref{eq:ca-A-margin} is verified by a computer-assisted argument; see Sections~\ref{subsec:ca-reconstruction} and~\ref{subsec:ca-computational-proofs}. The shock inequality~\eqref{eq:ca-A-shock-margin} follows from~\eqref{eq:finite-pressure-RH-feedback-coefficient-A}. Indeed,
\begin{equation*}
 (1-\alpha)-b_A(\gamma)
 =\tfrac{\theta(6-\theta-4\theta^2)}{2(1+\theta)(2-\theta^2)}>0,
 \qquad 0<\theta\leq\tfrac2{\sqrt5}.
\end{equation*}

2. The $W$ inequality. Similarly to the argument for the $A$-inequality, we will use a barrier function for the $G_W$ inequality. It suffices to find a function $H\in C^1([1,\Rs])$ satisfying
\begin{equation*}
 H(1)\geq-b_W(\gamma),\qquad
 H(R)\leq\tfrac{549}{500},\qquad 1\leq R\leq\Rs,
\end{equation*}
and the differential inequality
\begin{equation}
\label{eq:ca-common-W-residual}
 \LW(R)R\tfrac{\dd H}{\dd R}(R)
 -\ell_{WW}(R)H(R)-\widetilde{\mathcal R}_W(R)\geq0.
\end{equation}
To justify this comparison, Lemma~\ref{lem:ca-common-radius} and $\mathcal J_W\geq0$ give $\mathcal R_W\leq\widetilde{\mathcal R}_W$. Subtracting~\eqref{eq:ca-common-W-residual} from~\eqref{eq:profile-gain-W}, we obtain, since $\LW>0$,
\begin{equation*}
 \tfrac{\dd}{\dd R}\bigl(G_W(R)-H(R)\bigr)
 \leq\tfrac{\ell_{WW}(R)}{R\LW(R)}
 \bigl(G_W(R)-H(R)\bigr).
\end{equation*}
Multiplying by the integrating factor $e^{-\int_1^R\frac{\ell_{WW}(s)}{s\LW(s)}\,\dd s}$ and integrating from $R=1$ yields
\begin{equation*}
 G_W(R)-H(R)
 \leq\bigl(-b_W(\gamma)-H(1)\bigr)
 e^{\int_1^R\frac{\ell_{WW}(s)}{s\LW(s)}\,\dd s}\leq0.
\end{equation*}
Thus $G_W(R)\leq H(R)$. The definition of $H$ is specified in Section~\ref{subsec:ca-reconstruction}. The inequality~\eqref{eq:ca-common-W-residual} is verified by a computer-assisted argument; see Sections~\ref{subsec:ca-reconstruction} and~\ref{subsec:ca-computational-proofs}. Consequently,
\begin{equation*}
 \sup_{1\leq R\leq\Rs}G_W(R)\leq\tfrac{549}{500}<\tfrac{11}{10}.
\end{equation*}
3. The $Z$ inequality. Lemma~\ref{lem:ca-common-radius} and $\mathcal J_Z\geq0$ give
\begin{equation*}
 \Psi_Z(R)\leq\widetilde{\Psi}_Z<0,
 \qquad 1\leq R\leq\Rs,
\end{equation*}
where the last inequality is verified by a computer-assisted argument; see Sections~\ref{subsec:ca-reconstruction} and~\ref{subsec:ca-computational-proofs}. 
\end{proof}

\section{Computer-assisted verification}
\label{app:three-profile-computation}

We describe the computer-assisted verification of the inequalities in Appendix~\ref{app:three-profile-interval-proof}. The programs are written in Python and are run in a SageMath environment~\cite{SageMath}. The organization of the code and the structure of this appendix are inspired by the computer-assisted proofs in~\cite{CCSV26,CGP26,CGS25,BCG25}. All source code and data mentioned in this appendix are available at \url{https://github.com/giorgiocialdea/stability_guderley_ca}. 

\subsection{Supplementary data}
\label{subsec:ca-shooting}

The parameter-$z$ table in \path{data/parameters-z} contains $453$ rational nodes $\gamma_0<\cdots<\gamma_{452}$, with $\gamma_0=1$ and $\gamma_{452}=\frac53$, together with associated prescribed values $r_j^-,r_j^+$ at each node.  From these data we define, for $1\leq j\leq452$,
\begin{equation*}
 \theta_j:=\sqrt{2-\tfrac2{\gamma_j}},\qquad
 X_6^\pm(\gamma_j):=\tfrac12+\tfrac{\theta_j\log\theta_j}{6}+\theta_j r_j^\pm.
\end{equation*}
With $\theta_0=0$, we interpolate the prescribed values $r_j^\pm$ linearly in $\theta$. For $\gamma\in[\gamma_j,\gamma_{j+1}]$, $0\leq j\leq451$, we set
\begin{equation}
\label{eq:ca-common-face-interpolation}
 X_6^\pm(\gamma):=\tfrac12+\tfrac{\theta\log\theta}{6}+\theta\left[r_j^\pm+
 \tfrac{\theta-\theta_j}{\theta_{j+1}-\theta_j}(r_{j+1}^\pm-r_j^\pm)\right],
\end{equation}
and define the functions $z_\pm$ by
\begin{equation*}
 z_\pm(\gamma):=\tfrac{\theta X_6^\pm(\gamma)(1-\theta X_6^\pm(\gamma))}{\gamma+(2-\gamma)\theta X_6^\pm(\gamma)}.
\end{equation*}
The prescribed values satisfy $r_j^-<r_j^+$ at every node; interpolation and~\eqref{eq:ca-common-parameter-monotonicity} therefore give $z_-(\gamma)<z_+(\gamma)$ for $\gamma>1$.

\subsection{Objects, methods and routines}
\label{subsec:ca-methods}

We describe the formulas and then the shared interval routine. The phase and barrier formulas are in \path{profiles.py} and \path{barriers.py}; the radius and stability expressions are in \path{fields.py}, \path{radius.py} and \path{inequalities.py}.

{\emergencystretch=1em Common arithmetic is in \path{expressions.py}, and the actual interval evaluator is in \path{interval_checks.py}. The fixed parameters are in \path{parameters.py}.\par}

\subsubsection{Phase equations and parameter bounds}

Substitution of~\eqref{eq:ca-normalization} into~\eqref{eq:guderley:VQ-autonomous} gives the equations
\begin{subequations}
\label{eq:ca-phase-odes}
\begin{align}
 R\partial_RX&=\tfrac{N_x}{\theta D},\label{eq:ca-phase-radius}\\
 R\partial_RC&=\tfrac{C N_c}{XD},\label{eq:ca-phase-F}\\
 \tfrac{\dd X}{\dd C}&=\tfrac{X N_x}{\theta C N_c},
\notag\\
 \tfrac{\dd\log R}{\dd X}&=\theta\tfrac{D}{N_x}.
\notag
\end{align}
\end{subequations}
These identities and~\eqref{eq:ca-background-endpoint-z} are computed symbolically in \path{profiles.py}. Using
\begin{equation*}
 w:=\sqrt{1-2(\gamma+2)z+(\gamma-2)^2z^2},
\end{equation*}
one has
\begin{equation*}
 X_6=C_6=\tfrac{1+(\gamma-2)z-w}{2\theta}.
\end{equation*}
The slope $\alpha\eta_1$ (defined in~\eqref{eq:guderley:sonic-phase-slope}) is the negative root of the quadratic equation
\begin{equation*}
 (\partial_C N_x)\eta^2+(\partial_X N_x-\theta\partial_C N_c)\eta-\theta\partial_X N_c=0\quad\text{at }(X_6,C_6).
\end{equation*}
The positive discriminant in Lemma~\ref{lem:ca-common-barriers}(\ref{item:ca-common-qualitative}) ensures that the square root in~\eqref{eq:guderley:sonic-phase-slope} is real. The higher coefficients are computed using the recurrence~\eqref{eq:guderley:sonic-coefficient-recurrence} with $\ell=2,3$, followed by the rescaling~\eqref{eq:ca-common-sonic-Taylor}.  

For $0\leq\lambda\leq1$, we use 
\begin{equation}
\label{eq:ca-common-parameter-substitution}
 X_6=X_6^-(\gamma)+\lambda\bigl(X_6^+(\gamma)-X_6^-(\gamma)\bigr),\qquad
 z=\tfrac{\theta X_6(1-\theta X_6)}{\gamma+(2-\gamma)\theta X_6}.
\end{equation}
We check~\eqref{eq:ca-common-admissible-z} for these interpolated parameters. The quadratic relation gives
\begin{equation*}
 \bigl(1+(\gamma-2)z-2\theta X_6\bigr)^2
 =1-2(\gamma+2)z+(\gamma-2)^2z^2>0.
\end{equation*}
The expression $1+(\gamma-2)z-2\theta X_6$ is continuous across the interpolation intervals and tends to $1$ as $\gamma\downarrow1$, so it is positive throughout. The identity
\begin{equation}
\label{eq:ca-common-parameter-monotonicity}
 \partial_{X_6}z
 =\theta\tfrac{1+(\gamma-2)z-2\theta X_6}{\gamma+(2-\gamma)\theta X_6}>0
\end{equation}
shows that we can parametrize the interval $z_-<z<z_+$ in terms of $X_6^-<X_6<X_6^+$.

\subsubsection{Comparison functions}
\label{subsec:ca-sonic-cap}

For a polynomial $C(x)$, we set
\begin{equation}
\label{eq:ca-common-sonic-defect}
 \mathfrak I[C]:=-X N_x[X,C]\tfrac{\dd C}{\dd x}-L\theta C N_c[X,C],\qquad X=X_6-Lx.
\end{equation}
We now normalize the expressions of $N_x$ and $D$. By definition, we have that $N_x$ and $D$ vanish at $x=0$, while by construction we have that $\mathfrak I[C^\flat]$ and $\mathfrak I[C^\sharp]$ vanish cubically at $x=0$. Moreover, both the sonic barrier quantities contain a factor of $\theta$, which we simplify before interval evaluation. We hence compute symbolically the normalized expressions
\begin{equation}
\label{eq:ca-common-sonic-deflation}
 -\tfrac{N_x}{x},\qquad -\tfrac{D}{x},\qquad
 -\tfrac{\mathfrak I[C_6^\flat]}{\theta x^3},\qquad
 \tfrac{\mathfrak I[C_6^\sharp]}{\theta x^3}.
\end{equation}
For the shock barrier we write $S=C_0+\theta(a_1h+ch^2)$, with $c=a_2+\varepsilon$, and set $\zeta(h)=S_X-\theta S\mathcal B(X,C)$. Since the Taylor identities give $\zeta(0)=0$, we use the expression
\begin{equation}
\label{eq:ca-common-shock-deflation}
 \tfrac{\zeta}{\theta h}
 =2c-\theta(a_1+ch)\mathcal B(X,C)
 -C_0\left[\tfrac{\mathcal B(X,C_0)-\mathcal B(X_0,C_0)}{h}
 +\tfrac{\mathcal B(X,C)-\mathcal B(X,C_0)}{C-C_0}\tfrac{C-C_0}{h}\right].
\end{equation}
 The power difference is evaluated through
\begin{equation*}
 \psi(s,k):=\int_0^1(1+sr)^{-k-1}\,\dd r
 =\tfrac{1-(1+s)^{-k}}{ks},\qquad s:=\tfrac h{X_0},
\end{equation*}
which gives
\begin{equation}
\label{eq:ca-common-shock-increment}
 \tfrac{C-C_0}{h}=-\tfrac{kC_0}{X_0}\psi(h/X_0,k)+\left(\tfrac{X_0}{X}\right)^k\theta(a_1+ch).
\end{equation}
The file \path{barriers.py} constructs the expressions in~\eqref{eq:ca-common-sonic-deflation} using Taylor identities and computes the expressions in~\eqref{eq:ca-common-shock-deflation} by polynomial division in \texttt{SymPy}. It evaluates $\psi$ using~\eqref{eq:ca-common-normalized-power}.

\subsubsection{Radius comparison}
\label{subsec:ca-radius-formulas}

The radius comparison  function $R^\sharp$ is defined differently according to the $\gamma$ range. The file \path{fields.py} defines $R^\sharp$.  On $1<\gamma\leq\frac{8192}{8191}$, we define
\begin{equation*}
 T(X):=X-\log\left(\tfrac{2X+3}{3}\right),
\end{equation*}
and
\begin{equation*}
 \tfrac{R^\sharp}{\theta} :=T(X)-T(X_0)+\tfrac{23}{100}\theta(X-X_0)
 +\left(\tfrac{31}{200}\theta-\tfrac1{12}\theta\log\theta\right)(X^2-X_0^2).
\end{equation*}
On $\frac{8192}{8191}\leq\gamma<\frac{11}{10}$, we introduce the polynomial
\begin{equation*}
 R_0(X):=\tfrac X3+\tfrac{2X^2}{9}-\tfrac{8X^3}{81}+\tfrac{4X^4}{81},
\end{equation*}
and use
\begin{equation*}
\begin{split}
 R^\sharp:={}&\theta\left[R_0(X)-R_0(X_0)
 +\left(\tfrac35-\tfrac\theta2+\tfrac14(1-\tfrac{5\theta}{2})\right)\theta(X-X_0)\right]
 +\tfrac{\theta L}{200}x(1-x)\\
 &+\begin{cases}
 0,&\tfrac{8192}{8191}\leq\gamma\leq\tfrac{51}{50},\\
 \tfrac{\theta L}{1000}(1-x)^4,&\tfrac{51}{50}\leq\gamma\leq\tfrac{11}{10}.
 \end{cases}
\end{split}
\end{equation*}
Finally, for $\frac{11}{10}\leq\gamma\leq\frac53$, we set
\begin{equation*}
 R^\sharp:=\tfrac{\theta L(1-x)}{34000}\left((12270-3330x)\gamma+6223+433x\right).
\end{equation*}
 We compute the derivative in~\eqref{eq:ca-S-margin}.

For $1<\gamma\leq\frac{8192}{8191}$, we verify~\eqref{eq:ca-S-margin} in the following steps.

1. We seek to prove
\begin{equation*}
 \tfrac{\partial_XR^\sharp}{\theta}-\tfrac{D(X,C(X))}{N_x[X,C(X)]}>0,\qquad X_0\leq X<X_6.
\end{equation*}
Since $N_x[X,C(X)]<0$ by~\eqref{eq:ca-common-field-signs} and Lemma~\ref{lem:ca-common-profile}, multiplication by $-N_x>0$ gives the equivalent inequality
\begin{equation*}
 D(X,C(X))-\tfrac{\partial_XR^\sharp}{\theta}N_x[X,C(X)]>0.
\end{equation*}

2. We first establish the lower bound $C(X)\geq X_6+\frac7{10}\theta(X_6-X)$. On the sonic half, interval arithmetic verifies $\frac{c_1}{\theta L}>\frac7{10}$, $c_2>0$ and $c_3>0$. The quadratic and cubic terms of $C_6^\flat$ in~\eqref{eq:ca-common-sonic-barriers} are therefore nonnegative, so
\begin{equation*}
 C_6^\flat(x)\geq X_6+c_1x\geq X_6+\tfrac7{10}\theta(X_6-X),\qquad 0\leq x\leq\tfrac12.
\end{equation*}
On the shock half, interval arithmetic checks
\begin{equation*}
 \tfrac{C_1^\flat(x)-X_6}{\theta}-\tfrac7{10}(X_6-X)>0,\qquad \tfrac12\leq x\leq1.
\end{equation*}
This shows the desired inequality.

3. The routine \texttt{radius\_monotonicity} uses interval arithmetic to verify 
\begin{equation*}
 \partial_{(C^2)}\left(D-\tfrac{\partial_XR^\sharp}{\theta}N_x\right)
 =-1-\tfrac{\partial_XR^\sharp}{\theta}(3\theta X-3+4\theta q)>0.
\end{equation*}
More precisely, \texttt{radius\_monotonicity} evaluates the three coefficients of $\theta$, $X$, and $-\theta\log\theta\,X$:
\begin{equation*}
\begin{split}
 &\partial_{C^2}\left(D-\tfrac{\partial_XR^\sharp}{\theta}N_x\right)
 =\theta\left(\tfrac{69}{100}-\tfrac{4q}{3}-\tfrac{23\theta q}{25}\right)
 -\theta\log\theta\,X\left(\tfrac{3-\theta(3X+4q)}6\right)\\
 &\qquad+X\left[\tfrac4{2X+3}+\theta\left(\tfrac{93}{100}-\tfrac{3(2X+1)}{2X+3}-\tfrac{16q}{3(2X+3)}\right)-\theta^2\left(\tfrac{69}{100}+\tfrac{31q}{25}+\tfrac{93X}{100}\right)\right].
\end{split}
\end{equation*}
Each of the three coefficients is proved positive by interval arithmetic. Thus $D-\frac{\partial_XR^\sharp}{\theta}N_x$ is monotone increasing in $C^2$; to prove~\eqref{eq:ca-S-margin}, it is enough to check that this expression is positive on the line $C=X_6+\frac7{10}\theta(X_6-X)$.

4. \texttt{build\_line\_quotient\_kernel} uses SymPy to construct the polynomials \texttt{Pa} and \texttt{Pt}, defined in
\begin{equation*}
 \left.\left(D(X,C)-\tfrac{\partial_XR^\sharp}{\theta}N_x[X,C]\right)\right|_{C=X_6+\tfrac7{10}\theta(X_6-X)}=\tfrac{\theta(X_6-X)}{(2X+3)(1+\theta(1-\theta^2)X_6)}
 \left[\mathtt{Pt}+(-\log\theta)X\,\mathtt{Pa}\right].
\end{equation*}
Before constructing these polynomials, we express $X_6-\frac12$ as a linear combination of $\theta\log\theta$ and $\theta$; this allows us to factor out $\theta$ as in the displayed identity.
Since the prefactor is positive for $X<X_6$, we use interval arithmetic to verify
\begin{equation*}
 \mathtt{Pa}\geq0,\qquad \mathtt{Pt}+(-\log\theta)X\,\mathtt{Pa}>0.
\end{equation*}
Observe that when the box includes $\theta=0$, the upper endpoint of $-\log\theta$ is infinite, but $X \mathtt{Pa}\geq0$, so we obtain the desired lower bound.

For $T(X)-T(X_0)$ we use the quotient 
\begin{equation*}
 \tfrac{\log\left(1+\tfrac{2(X-X_0)}{2X_0+3}\right)}{\tfrac{2(X-X_0)}{2X_0+3}}.
\end{equation*}
The routine \texttt{log1p\_ratio} evaluates this quotient.

On the remaining ranges we evaluate~\eqref{eq:ca-S-margin} directly using the profile bounds.

We observe that the three endpoints are all nodes of the $\gamma$ table in \path{data/parameters-z}, so each subdivided interval uses the corresponding formula for $R^\sharp$.

\subsubsection{Stability expressions}
\label{subsec:ca-reconstruction}

We observe the identities
\begin{subequations}
\label{eq:ca-original-derivatives}
\begin{align}
 \Vbar&=\crrb(\theta X-1),& \Qbar&=\tfrac{\crrb\theta C}{\alpha},\label{eq:ca-original-values}\\
 R\partial_R\Vbar&=\crrb\tfrac{N_x}{D},& R\partial_R\Qbar&=\tfrac{\crrb\theta C N_c}{\alpha XD},\label{eq:ca-original-radial-derivatives}\\
 R\partial_R\Kbar&=-\tfrac{1+\Vbar}{\crrb+\Vbar}.\label{eq:ca-original-entropy-derivative}
\end{align}
\end{subequations}
Therefore the differentiated variables satisfy
\begin{subequations}
\label{eq:ca-ring-reconstruction}
\begin{align}
 \overline{\rZ}&=R\partial_R\Vbar-R\partial_R\Qbar+\tfrac{\Qbar}{\gamma}R\partial_R\Kbar,\label{eq:ca-reconstruction-Z}\\
 \overline{\rA}&=\tfrac{\alpha\Qbar}{\gamma}R\partial_R\Kbar,\label{eq:ca-reconstruction-A}\\
 \overline{\rW}&=R\partial_R\Vbar+R\partial_R\Qbar-\tfrac{\Qbar}{\gamma}R\partial_R\Kbar.\label{eq:ca-reconstruction-W}
\end{align}
\end{subequations}
From~\eqref{eq:ca-phase-radius}--\eqref{eq:ca-phase-F} we also observe the identity
\begin{equation}
\label{eq:ca-dotY}
 R\partial_R\Ybar=\tfrac{N_x}{\theta D}\partial_X\Ybar+\tfrac{C N_c}{XD}\partial_C\Ybar.
\end{equation}
These equations are used to rewrite any expression involving differentiated variables uniquely in terms of $(X,C)$.

The background coefficient formulas are
\begin{subequations}
\label{eq:ca-background-sign-expressions}
\begin{align}
 -\ell_{ZA}&=\tfrac{\overline{\rW}-\overline{\rZ}}2+\tfrac{4\crrb\theta C}{\gamma},\label{eq:ca-background-minus-ZA}\\
 \ell_{ZW}&=-\tfrac{\overline{\rA}}2+\tfrac{(\gamma+4)\crrb\theta C}{2\gamma}
 +\tfrac{(3\gamma-5)\crrb(\theta X-1)}4+\tfrac{\gamma-3}4\overline{\rZ},\label{eq:ca-background-ZW}\\
 -\ell_{WZ}&=-\tfrac{\overline{\rA}}2+\tfrac{(\gamma+4)\crrb\theta C}{2\gamma}
 -\tfrac{(3\gamma-5)\crrb(\theta X-1)}4+\tfrac{3-\gamma}4\overline{\rW}.\label{eq:ca-background-minus-WZ}
\end{align}
\end{subequations}
We use interval arithmetic and the enclosures~\eqref{eq:ca-z-enclosure} and~\eqref{eq:ca-profile-enclosure} to prove the signs~\eqref{eq:ca-background-signs}. For $\ell_{ZW}$, to deal with the degeneracy in the limit $\theta \downarrow 0$, the code verifies the sign of $\frac{X\ell_{ZW}}{\theta}$ (the factor $\theta$ is cancelled symbolically on the sonic half $0<x<\frac{1}{2}$; on the shock half $\frac12<x<1$, we instead bound the term $\frac{C-X_6}{\theta}$; see Section~\ref{subsec:ca-endpoint}).

When evaluating expressions involving first derivatives of the profile on the sonic half $0<x<\frac12$, we first cancel the common factor $x$ in ratios such as $\frac{N_x}{D}$ and $\frac{N_c}{D}$; for instance, the code evaluates $\frac{N_x}{D}$ as $\frac{N_x/x}{D/x}$ using~\eqref{eq:ca-common-sonic-deflation}.

Substituting $R^\sharp$ for $\log R$ into~\eqref{eq:profile-row-quantities}, we introduce
\begin{subequations}
\label{eq:ca-weighted-row-quantities}
\begin{align}
 \widetilde{\mathcal R}_A
 &:=\tfrac{21}{10}\ell_{AZ}-\kappa_\gamma\overline{\rA}+R^\sharp\mathcal J_A,
 \label{eq:ca-weighted-row-A}\\
 \widetilde{\mathcal R}_W
 &:=-\ell_{WZ}-(1-\alpha)\ell_{WA}+\kappa_\gamma\overline{\rW}+R^\sharp\mathcal J_W,
\notag\\
 \widetilde{\Psi}_Z
 &:=\ell_{ZZ}-(1-\alpha)\ell_{ZA}+\tfrac{11}{10}\ell_{ZW}
 +\kappa_\gamma\overline{\rZ}+R^\sharp\mathcal J_Z.
\notag
\end{align}
\end{subequations}
In the limit $\theta \downarrow 0$, we write $R^\sharp R\partial_R\overline{\rA}$ and $R^\sharp R\partial_R\overline{\rZ}$ as products of $\frac{R^\sharp}{\theta X}$ with $\theta X R\partial_R\overline{\rA}$ and $\theta X R\partial_R\overline{\rZ}$, respectively. For the $W$ component we use $\frac{R^\sharp}{\theta}$ and $\theta R\partial_R\overline{\rW}$.

For the $W$ comparison, we use the function $H(\theta,x)$
\begin{equation}
\label{eq:ca-common-W-height}
\begin{split}
 H(\theta,x):={}&1-\tfrac{6\theta x}{(1+\theta)(2+\theta)}
 -\tfrac{19}{100}(1-x)+\tfrac{81}{100}(1-x)^2-\tfrac{79}{100}(1-x)^3+\tfrac{67}{250}(1-x)^4\\
 &+\tfrac{53}{200}\theta x\log\left(1+\tfrac{2(1-x)}{\theta}\right).
\end{split}
\end{equation}
This function satisfies the exact bounds
\begin{equation}
\label{eq:ca-common-W-height-bound}
 H(\theta,1)=-b_W(\gamma)=\tfrac{(1-\theta)(2-\theta)}{(1+\theta)(2+\theta)},
 \qquad H(\theta,x)\leq\tfrac{549}{500}<\tfrac{11}{10}.
\end{equation}
To verify the upper bound for $H$ we denote the quartic part by $p(x)$. We have
\begin{equation*}
 \tfrac{p(0)-p(x)}{x}
 =\tfrac{49}{500}+\tfrac{(1-x)(134(1-x)^2-261(1-x)+144)}{500}\geq\tfrac{49}{500},
 \qquad 0<x\leq1,
\end{equation*}
since the quadratic has positive leading coefficient and discriminant $-9063$. We also have $1+b_W(\gamma)\geq\theta$ for $0\leq\theta\leq1$. Differentiating $y-\log(1+y^2)$ proves $\log(1+y^2)\leq y$ for $y\geq0$, and hence
\begin{equation*}
 \tfrac{53}{200}\theta\log\left(1+\tfrac{2(1-x)}{\theta}\right)-(1+b_W(\gamma))
 \leq\tfrac{53}{200}\sqrt{2\theta}-\theta
 \leq\tfrac{2809}{80000}<\tfrac{49}{500}.
\end{equation*}
The identity $H=1+p(x)+x[\frac{53}{200}\theta\log(1+2(1-x)/\theta)-(1+b_W(\gamma))]$ now gives~\eqref{eq:ca-common-W-height-bound}.

Then, to prove the profile inequalities~\eqref{eq:three-profile-inequalities}, we follow the proof of Lemma~\ref{lem:ca-common-stability} and verify the following three inequalities by interval arithmetic, writing the $W$ comparison in the coordinate $x$ using~\eqref{eq:ca-phase-radius}:
\begin{equation*}
\begin{aligned}
 -\tfrac{(\overline{\rZ}+\overline{\rW})(X+C)}{2L}\,\tfrac{\dd H}{\dd x}
 -\ell_{WW}H-\widetilde{\mathcal R}_W&\geq0,
\\
 \ell_{AA}(1-\alpha)+\widetilde{\mathcal R}_A&<0,
\\
 \widetilde{\Psi}_Z&<0.
\end{aligned}
\end{equation*}
Second derivatives of the profile enter $\mathcal J_i$ through $R\partial_R\Ybar$ in~\eqref{eq:profile-forcing-vectors}. On the sonic half $0<x<\frac{1}{2}$, fix $\gamma,z,\omega$ and let $C(x)$ denote the cubic polynomial in~\eqref{eq:ca-omega-representation}. The chain rule and~\eqref{eq:ca-common-sonic-defect} give
\begin{equation}
\label{eq:ca-sonic-derivative-correction}
 R\partial_R\Ybar
 ={}-\tfrac{N_x/x}{L\theta(D/x)}\tfrac{\dd}{\dd x}\Ybar(X_6-Lx,C(x))
 -\tfrac{x}{LX(D/x)}\tfrac{\mathfrak I[C]}{\theta x^3}\bigl(x\partial_C\Ybar\bigr).
\end{equation}
Here the left-hand side is evaluated using~\eqref{eq:ca-dotY}, and all phase quantities are evaluated at $(X,C)=(X_6-Lx,C(x))$. The code evaluates algebraically simplified forms of this identity, using the bounded expression~\eqref{eq:ca-common-sonic-deflation}.
For the second term in~\eqref{eq:ca-sonic-derivative-correction}, we use the identity
\begin{equation*}
 x\partial_C\left(\tfrac{N_x}{D}\right)
 =\tfrac{\partial_C N_x}{D/x}
 -\tfrac{(N_x/x)\partial_C D}{(D/x)^2}.
\end{equation*}
The right-hand side uses the simplified quantities $N_x/x$ and $D/x$ in~\eqref{eq:ca-common-sonic-deflation}, where $D/x$ is bounded away from zero. We treat $N_c/D$ in the same way.

\subsubsection{Interval arithmetic and subdivision}

In every computation we use Sage intervals.

For a given expression, to prove positivity on a given box, \texttt{verify\_boxes} evaluates it using interval arithmetic. If the resulting lower bound is not positive, it tries to improve the bound using derivatives. It evaluates the function at the midpoint of the box and bounds each partial derivative over the whole box, obtaining the enclosure:
\begin{equation*}
 f(B)\subset f(m)+\sum_j|\partial_jf(B)|(B_j-m_j),
\end{equation*}
where $m$ is the exact midpoint of $B$ and the derivatives are bounded on the whole box. For absolute values, we use the analogous enclosure
\begin{equation*}
 |f(B)|\subset |f(m)|+\sum_j|\partial_jf(B)|(B_j-m_j).
\end{equation*}
It then intersects the interval arithmetic and derivative bound to obtain an improved bound. If positivity remains unresolved, it subdivides the box. Derivative size suggests in which direction to split the box (though this gets periodically renormalized to avoid missing splitting along one direction).

The exact symbolic expressions required by each lemma are constructed at the beginning and then stored in named variables. We use SymPy to differentiate and simplify these expressions as described above. The routine \texttt{compile\_expression} in \path{expressions.py} then uses SymPy's \texttt{lambdify}: a printer (\texttt{ExactPrinter}) converts the symbolic expression to text, from which \texttt{lambdify} constructs a function that evaluates constants and functions using Sage interval arithmetic~\cite{SymPyLambdify}. We then evaluate it on successive interval boxes, without repeating the symbolic computation. This avoids manually transcribing the derived formulas each time.

\subsubsection{The first parameter box}
\label{subsec:ca-endpoint}
The gamma interval containing $\gamma \downarrow 1$ (equivalently, $\theta \downarrow 0$) requires special care, because of the degeneracy of some expressions in the limit. We now describe the routines used here to deal with it. This box corresponds to
\begin{equation*}
 0\leq\theta\leq\theta_1=\tfrac1{8192},\qquad
 1<\gamma\leq\tfrac{134217728}{134217727},
\end{equation*}
 The interpolated bounds~\eqref{eq:ca-common-face-interpolation} give that $X_6$ lives in the strip
\begin{equation}
\label{eq:ca-common-endpoint-faces}
 X_6=\tfrac12+\tfrac{\theta\log\theta}{6}+\theta r,
 \qquad -\tfrac7{20}\leq r\leq-\tfrac{149}{500}.
\end{equation}
The three functions evaluated in \path{expressions.py} are
\begin{equation*}
\begin{aligned}
 \phi(\theta,a)&:=\theta\log(1+a/\theta),&\phi(0,a)&:=0,
\\
 E(u)&:=\int_0^1e^{-u\sigma}\,\dd\sigma=\tfrac{1-e^{-u}}u,&E(0)&:=1,
\\
 J(v)&:=-\tfrac{v\log v}{1-v},&J(0)&:=0,\quad J(1):=1.
\end{aligned}
\end{equation*}
For $\theta,a,u\geq0$ and $0\leq v\leq1$, the function $\phi$ increases in each argument, $E$ decreases, and $J$ increases. Their interval bounds are therefore obtained by evaluating the appropriate endpoints.

For $0\leq u\leq1$, the bounds for $E$ and its derivatives follow from
\begin{equation*}
 \left|e^{-u\sigma}-\sum_{j=0}^{16}\tfrac{(-u\sigma)^j}{j!}\right|
 \leq\tfrac{(u\sigma)^{17}}{17!},\qquad 0\leq\sigma\leq1.
\end{equation*}
Multiplying by $\sigma^n$ and integrating over $[0,1]$ gives
\begin{equation}
\label{eq:ca-E-Taylor-enclosure}
 (-1)^nE^{(n)}(u)\in\sum_{j=0}^{16}\tfrac{(-u)^j}{j!(n+j+1)}
 +\left[-\tfrac{u^{17}}{17!(n+18)},\tfrac{u^{17}}{17!(n+18)}\right],
 \qquad n=0,1,2.
\end{equation}
For $u>1$, the same exponential moments are bounded by integration by parts.

1. \emph{Shock barriers.} To evaluate $(X_0/X)^k$ and $\psi(h/X_0,k)$ in~\eqref{eq:ca-common-shock-increment} as $X_0=\theta/(4-\theta^2)$ tends to zero, we use
\begin{equation}
\label{eq:ca-common-normalized-power}
 v:=\tfrac{X_0}{X},\qquad
 u:=\tfrac k\theta\phi\bigl(\theta,(4-\theta^2)h\bigr),\qquad
 \left(\tfrac{X_0}{X}\right)^k=e^{-u},\qquad \psi(h/X_0,k)=J(v)E(u).
\end{equation}
These identities hold for $\theta>0$. We bound $v=(\theta/X)/(4-\theta^2)$ as described below and evaluate $\phi$, $E$ and $J$ using the routines above. 

2. \emph{The ratio $(C-X_6)/\theta$.} To verify~\eqref{eq:ca-common-midpoint-signs} and the lower bound used for~\eqref{eq:ca-S-margin}, we evaluate $(C-X_6)/\theta$ for $C=C_1^\flat$ or $C=C_1^\sharp$, defined in~\eqref{eq:ca-common-shock-lower} and~\eqref{eq:ca-common-shock-upper}, respectively. Substituting the definitions, together with~\eqref{eq:ca-shock-point}, \eqref{eq:ca-common-normalized-power} and~\eqref{eq:ca-common-endpoint-faces}, we obtain
\begin{equation*}
\begin{split}
 \tfrac{C-X_6}{\theta}={}&\tfrac{\theta}{2(4-\theta^2)}-r+\left(\tfrac{X_0}{X}\right)^k(a_1h+ch^2)
 -\tfrac{kC_0}{\theta}E(u)\log\bigl((4-\theta^2)X\bigr)\\
 &+\left(\tfrac{kC_0}{\theta}E(u)-\tfrac16\right)\log\theta,
 \qquad c=a_2+\varepsilon,
\end{split}
\end{equation*}
with $\varepsilon=\varepsilon_\flat$ or $\varepsilon=\varepsilon_\sharp$, respectively, as defined in~\eqref{eq:ca-common-shock-errors}. We split the last term into $(kC_0/\theta-\frac16)\log\theta$ and $(kC_0/\theta)(E(u)-1)\log\theta$. For fixed $X_6$, the expression $kC_0/\theta$ converges to $X_6/3$ as $\theta \to 0$. We therefore cancel $\theta$ symbolically in $(kC_0/\theta-X_6/3)/\theta$ and use~\eqref{eq:ca-common-endpoint-faces} to write
\begin{equation*}
 \left(\tfrac{kC_0}{\theta}-\tfrac16\right)\log\theta
 =\tfrac{\tfrac{kC_0}{\theta}-\tfrac{X_6}{3}}{\theta}\,\theta\log\theta
 +\tfrac r3\theta\log\theta+\tfrac1{18}\theta\log^2\theta.
\end{equation*}
For the second term, we use
\begin{equation*}
 \tfrac{kC_0}{\theta}(E(u)-1)\log\theta
 =C_0\left(\tfrac k\theta\right)^2\tfrac{E(u)-1}{u}
 \left[\theta\log\theta\,\log\bigl((4-\theta^2)X\bigr)-\theta\log^2\theta\right].
\end{equation*}
We evaluate $C_0$ and $k/\theta=\frac{2q}{3-4\theta q}$ by interval arithmetic using~\eqref{eq:ca-shock-point} and~\eqref{eq:ca-shock-factor}. The quotient satisfies
\begin{equation*}
 -\tfrac12\leq\tfrac{E(u)-1}{u}
 =-\int_0^1(1-\sigma)e^{-u\sigma}\,\dd\sigma\leq0,\qquad u\geq0.
\end{equation*}
We bound this integral by the same Taylor argument as in~\eqref{eq:ca-E-Taylor-enclosure}, with continuous value $-\frac12$ at $u=0$.

For the expression in square brackets, consider a box $B_1\times B_2$ for $(\theta,X)$. If $\inf B_2>0$, we evaluate the expression directly by interval arithmetic (with the care for $\theta \log \theta$ explained below). If $0=\inf B_2<\sup B_2$, the logarithm has no finite lower bound on the box. We instead use $X\geq X_0$ and $\theta\log\theta\leq0$ to obtain
{\small
\begin{equation*}
 \theta\log\theta\,\max\left\{0,\log\bigl((4-(\inf B_1)^2)\sup B_2\bigr)\right\}-\theta\log^2\theta \leq\theta\log\theta\,\log\bigl((4-\theta^2)X\bigr)-\theta\log^2\theta\leq0.
\end{equation*}
}
The left side gives a lower bound: we use the minimum of $\theta\log\theta$ and the maximum of $\theta\log^2\theta$ on $B_1$. To bound these functions we will use their convexity properties. For $\theta \log \theta$, the only positive critical point is $\theta=e^{-1}$. On a closed bounded interval $B\subset[0,\infty)$, we get the enclosure
\begin{equation*}
\begin{aligned}
 \min_{\theta\in B}\theta\log\theta
 &=\begin{cases}
 -e^{-1},&e^{-1}\in B,\\
 \min\{(\inf B)\log(\inf B),(\sup B)\log(\sup B)\},&e^{-1}\notin B,
 \end{cases}
\\
 \max_{\theta\in B}\theta\log\theta
 &=\max\{(\inf B)\log(\inf B),(\sup B)\log(\sup B)\}.
\end{aligned}
\end{equation*}
For $\theta\log^2\theta$, we evaluate the endpoints and any critical points $e^{-2}$ and $1$ lying in the interval.

Sometimes the checked expression contains $X(C-X_6)/\theta$; this can be evaluated directly since the troublesome term $\log ( (4- \theta^2)X)$ becomes  $X \log ( (4- \theta^2)X) = X \log X + X \log (4-\theta^2)$. 

3. \emph{Joint parameter and spatial limits.} The ratio $\theta/X$ is bounded in the limit $\theta \downarrow 0$; but it does not converge: it tends to $4$ along the shock $X=X_0$ and to $0$ along $X=\sqrt\theta$. It is convenient to keep it as a separate coordinate, with $0\leq\theta/X\leq4$, and also use $\theta/X\leq\theta_1/X$ whenever $X>0$. These bounds on the first box are used throughout the whole computer-assisted proof on the shock half (that is, $\frac12\leq x\leq1$). 

For the $W$ inequality, the derivative of the logarithmic term in $H$ similarly depends on the ratio between $\theta$ and $1-x$. To deal with such a term, we split the domain into three regions:
\begin{equation*}
\begin{aligned}
 1-\theta\leq x\leq1:\quad &\xi:=\tfrac{1-x}{\theta}\in[0,1],\qquad
 X=\theta\left(\tfrac1{4-\theta^2}+L\xi\right),\qquad
 \tfrac\theta X=\tfrac1{\tfrac1{4-\theta^2}+L\xi},
\\
 \tfrac12\leq x\leq1-\theta:\quad &\nu:=\tfrac\theta{1-x}\in[0,1],\qquad
 X=(1-x)\left(\tfrac\nu{4-\theta^2}+L\right),\qquad
 \tfrac\theta X=\tfrac\nu{\tfrac\nu{4-\theta^2}+L},
\\
 0\leq x\leq\tfrac12:\quad &X=X_6-Lx,
\end{aligned}
\end{equation*}
 The first two variables are used in the computations involving the shock barriers and~\eqref{eq:ca-common-normalized-power}; the third is used in the computations involving the sonic barriers. 

\subsection{Details of the proofs}
\label{subsec:ca-computational-proofs}

The file \path{lemmas.py} contains the functions that run the computations needed for each Lemma (or part of it).

Every computation is carried out in the variables $\gamma$ (except on the first box, where we use the variable $\theta$), the z-interpolation parameter $\lambda$ in~\eqref{eq:ca-common-parameter-substitution}, the spatial coordinate $x$, and the parameter $\omega\in[0,1]$. This last parameter is used to substitute on each half, 
\begin{equation}
\label{eq:ca-omega-representation}
 C=C^\flat+\omega(C^\sharp-C^\flat),\qquad 0\leq\omega\leq1,
\end{equation}
using $C_6^\flat,C_6^\sharp$ on the sonic half and $C_1^\flat,C_1^\sharp$ on the shock half.

We now briefly sum up the computer-assisted checks performed for every lemma and record in the tables in Section~\ref{subsec:ca-interval-tables} lower bounds and (when available) upper bounds for all the quantities whose signs we checked in the Proofs of Lemmas~\ref{lem:ca-common-barriers}--\ref{lem:ca-common-stability}.

\begin{proof}[Computer-assisted checks for Lemma~\ref{lem:ca-common-barriers}]
1. Part~(\ref{item:ca-common-admissibility}). We substitute~\eqref{eq:ca-common-parameter-substitution} into~\eqref{eq:ca-common-admissible-z} and verify $z_g(\gamma)<z<z_M(\gamma)$ by interval arithmetic. For the lower inequality, the difference $z-z_g(\gamma)$ is normalized by dividing by $\theta$.

2. Part~(\ref{item:ca-common-qualitative}). We evaluate $C_6^\flat,C_6^\sharp$ from~\eqref{eq:ca-common-sonic-barriers} on $0\leq x\leq\frac12$ and $C_1^\flat,C_1^\sharp$ from~\eqref{eq:ca-common-shock-barriers} on $\frac12\leq x\leq1$, and prove that their interval lower bounds are positive. We also verify that $c_3/\theta$ does not vanish, with $c_3$ computed using~\eqref{eq:guderley:sonic-coefficient-recurrence} and~\eqref{eq:ca-common-sonic-Taylor}. 

3. Part~(\ref{item:ca-common-field-signs}). For $C=C_6^\flat$ and $C=C_6^\sharp$, we verify positivity of $-N_x[X,C]/x$ and $-D(X,C)/x$, using~\eqref{eq:ca-common-sonic-deflation} on $0\leq x\leq\frac12$. For $C=C_1^\flat$ and $C=C_1^\sharp$, we evaluate~\eqref{eq:ca-scaled-phase-fields} and verify $N_x[X,C]<0$ and $D(X,C)<0$ on $\frac12\leq x\leq1$. 

4. Part~(\ref{item:ca-common-sonic-signs}). To verify~\eqref{eq:ca-common-sonic-signs}, we evaluate the last two expressions in~\eqref{eq:ca-common-sonic-deflation}, with $C_6^\flat,C_6^\sharp$ defined in~\eqref{eq:ca-common-sonic-barriers}, and find positive interval lower bounds on $0\leq x\leq\frac12$.

5. Part~(\ref{item:ca-common-shock-signs}). To verify~\eqref{eq:ca-common-shock-signs}, we evaluate $\zeta/(\theta h)$ using~\eqref{eq:ca-common-shock-deflation} and~\eqref{eq:ca-common-shock-increment}, with $\varepsilon=\varepsilon_\flat$ and $\varepsilon=\varepsilon_\sharp$ from~\eqref{eq:ca-common-shock-errors}. We find a negative upper bound in the first case and a positive lower bound in the second, on $\frac12\leq x\leq1$.
\end{proof}

\begin{proof}[Computer-assisted checks for Lemma~\ref{lem:ca-common-parameter}]
The routine \texttt{A2} verifies the inequality~\eqref{eq:ca-common-midpoint-signs} at $z=z_-(\gamma)$ and $z=z_+(\gamma)$. Both inequalities are evaluated after dividing by $\theta$. 
\end{proof}

\begin{proof}[Computer-assisted checks for Lemma~\ref{lem:ca-common-radius}]
The routine \texttt{A5} uses the radius formulas in Section~\ref{subsec:ca-radius-formulas}, and we prove inequality~\eqref{eq:ca-S-margin}. We obtain the recorded bounds by dividing by $-N_x$; on the sonic half, we cancel the factor $X_6-X=Lx$ using~\eqref{eq:ca-common-sonic-deflation}. On the first box, we normalize by dividing by $\theta$. 
\end{proof}

\begin{proof}[Computer-assisted checks for Proposition~\ref{prop:ca-background-signs}]
The selection \texttt{A6} uses~\eqref{eq:ca-original-derivatives}--\eqref{eq:ca-background-sign-expressions} to write the quantities in~\eqref{eq:ca-background-signs} in terms of $x$ and $C$. It verifies the five background signs on the parameter and profile strips.
\end{proof}

\begin{proof}[Computer-assisted checks for Lemma~\ref{lem:ca-common-stability}]
1. For the $A$ inequality, \texttt{A7-A} checks positivity of the negative of~\eqref{eq:ca-A-margin}, using $\widetilde{\mathcal R}_A$ from~\eqref{eq:ca-weighted-row-A}. The shock inequality~\eqref{eq:ca-A-shock-margin} follows from~\eqref{eq:finite-pressure-RH-feedback-coefficient-A}.

2. For the $W$ inequality, \texttt{A7-W} checks the inequality~\eqref{eq:ca-common-W-residual} for the function~\eqref{eq:ca-common-W-height}.

3. For the $Z$ inequality, \texttt{A7-Z} checks positivity of $-\widetilde{\Psi}_Z$.
\end{proof}

The complete computation is run from the directory containing \path{script.py} with
\begin{verbatim}
sage -python -B script.py --lemma all --workers 4
\end{verbatim}
The options \texttt{A1-i}--\texttt{A1-v} follow the five parts of Lemma~\ref{lem:ca-common-barriers}. The other ones are \texttt{A2}, \texttt{A5}, \texttt{A6}, \texttt{A7-A}, \texttt{A7-W} and \texttt{A7-Z}. Each option, for example \texttt{--lemma A7-W}, checks only the signs of the corresponding Lemma. Selecting \texttt{all} runs each Lemma in order.

\subsection{Interval bounds}
\label{subsec:ca-interval-tables}

{\small
\begin{longtable}{@{}p{0.60\textwidth}rr@{}}
\caption{Bounds for the expressions checked in Lemma~\ref{lem:ca-common-barriers}.}\label{tab:ca-parameter-bounds}\\
\toprule Quantity & Lower bound & Upper bound \\
\midrule\endfirsthead
\toprule Quantity & Lower bound & Upper bound \\
\midrule\endhead
$z-z_g(\gamma)$ & $0.017\theta$ & $0.5\theta$ \\*
$z_M(\gamma)-z$ & $3.2\times10^{-4}$ & $0.18$ \\*
\midrule
$C_6^\flat$ & $0.41$ & $0.56$ \\*
$C_6^\sharp$ & $0.41$ & $0.57$ \\*
$C_1^\flat$ & $0.44$ & $0.63$ \\*
$C_1^\sharp$ & $0.45$ & $0.63$ \\*
$\frac{|c_3|}{\theta}$ & $5.8\times10^{-4}$ & $0.055$ \\*
\midrule
$-\frac{N_x[X,C_6^\flat]}x$ & $0.36$ & $1.1$ \\*
$-\frac{D(X,C_6^\flat)}x$ & $0.27$ & $0.51$ \\*
$-\frac{N_x[X,C_6^\sharp]}x$ & $0.36$ & $1.1$ \\*
$-\frac{D(X,C_6^\sharp)}x$ & $0.27$ & $0.51$ \\*
$-N_x[X,C_1^\flat]$ & $0.17$ & $0.76$ \\*
$-D(X,C_1^\flat)$ & $0.13$ & $0.32$ \\*
$-N_x[X,C_1^\sharp]$ & $0.17$ & $0.76$ \\*
$-D(X,C_1^\sharp)$ & $0.14$ & $0.32$ \\*
\midrule
$-\frac{\mathfrak I[C_6^\flat]}{\theta x^3}$ & $1.4\times10^{-9}$ & $0.042$ \\*
$\frac{\mathfrak I[C_6^\sharp]}{\theta x^3}$ & $6.4\times10^{-6}$ & $0.21$ \\*
$-\frac{\zeta}{\theta h},\quad \varepsilon=\varepsilon_\flat$ & $8.3\times10^{-9}$ & $0.72$ \\*
$\frac{\zeta}{\theta h},\quad \varepsilon=\varepsilon_\sharp$ & $1.8\times10^{-7}$ & $3.7$ \\*
\bottomrule
\end{longtable}}

{\small
\begin{longtable}{@{}p{0.60\textwidth}rr@{}}
\caption{Midpoint inequality~\eqref{eq:ca-common-midpoint-signs} for Lemma~\ref{lem:ca-common-parameter}.}\label{tab:ca-comparison-bounds}\\
\toprule Quantity & Lower bound & Upper bound \\
\midrule\endfirsthead
\toprule Quantity & Lower bound & Upper bound \\
\midrule\endhead
$C_1^\flat(\frac12;z_-)-C_6^\sharp(\frac12;z_-)$ & $7.9\times10^{-7}\theta$ & $1.1\times10^{-3}\theta$ \\*
$C_6^\flat(\frac12;z_+)-C_1^\sharp(\frac12;z_+)$ & $1.1\times10^{-7}\theta$ & $2.7\times10^{-3}\theta$ \\*
\bottomrule
\end{longtable}}

{\small
\begin{longtable}{@{}p{0.60\textwidth}r@{}}
\caption{Lower bounds for $\frac{\partial_XR^\sharp}{\theta}-\frac{D}{N_x}$ in~\eqref{eq:ca-S-margin}, on $X_0\leq X<X_6$.}\label{tab:ca-radius-direct-bounds}\\
\toprule Parameter range & Lower bound \\
\midrule\endfirsthead
\toprule Parameter range & Lower bound \\
\midrule\endhead
$1<\gamma\leq\frac{134217728}{134217727}$ (first parameter box) & $0.0036\theta$ \\*
$\frac{134217728}{134217727}\leq\gamma\leq\frac53$ & $4.4\times10^{-7}$ \\*
\bottomrule
\end{longtable}}

{\small
\begin{longtable}{@{}p{0.60\textwidth}rr@{}}
\caption{Bounds for the five expressions in Proposition~\ref{prop:ca-background-signs}.}\label{tab:ca-stability-bounds}\\
\toprule Quantity & Lower bound & Upper bound \\
\midrule\endfirsthead
\toprule Quantity & Lower bound & Upper bound \\
\midrule\endhead
$\overline{\rZ}$ & $0.72$ & $2.3$ \\*
$\overline{\rW}$ & $0.15$ & $5.2$ \\*
$-\ell_{ZA}$ & $0.029$ & $2.2$ \\*
$-\ell_{WZ}$ & $0.66$ & $3.2$ \\*
$\ell_{ZW}$ & $9.4\times10^{-5}\frac\theta X$ & $0.51\frac\theta X$ \\*
\bottomrule
\end{longtable}}

{\small
\begin{longtable}{@{}p{0.60\textwidth}rr@{}}
\caption{Bounds for the three stability expressions in Lemma~\ref{lem:ca-common-stability}.}\label{tab:ca-stability-residual-bounds}\\
\toprule Quantity & Lower bound & Upper bound \\
\midrule\endfirsthead
\toprule Quantity & Lower bound & Upper bound \\
\midrule\endhead
$-\ell_{AA}(1-\alpha)-\widetilde{\mathcal R}_A$ & $3.3\times10^{-4}$ & $3.3$ \\*
$\LW R\partial_RH-\ell_{WW}H-\widetilde{\mathcal R}_W$ & $9.7\times10^{-6}$ & $9.6$ \\*
$-\widetilde\Psi_Z$ & $7.9\times10^{-5}$ & $2.2$ \\*
\bottomrule
\end{longtable}}

\bibliographystyle{plain}
\bibliography{guderley_stability}

\end{document}